\theoremstyle{thmstyleone}%
\newtheorem{theorem}{Theorem}
\theoremstyle{thmstyletwo}%
\newtheorem{remark}{Remark}%
\theoremstyle{thmstylethree}%
\newtheorem{corollary}{Corollary}[theorem]
\newtheorem{lemma}[theorem]{Lemma}
\def\R{\mathbb{R}}
\newcommand{\longto}{\longrightarrow}
\DeclareMathOperator{\grad}{grad}
\DeclareMathOperator{\curl}{curl}
\def\divgn{\operatorname{div}}
\DeclareMathOperator{\rot}{rot}
\newcommand{\ainnerproduct}[2]{\langle #1, #2 \rangle}
\newcommand{\aInnerproduct}[2]{\bigl\langle #1, #2 \bigr\rangle}
\DeclarePairedDelimiterX{\norm}[1]{\lVert}{\rVert}{#1}
\begin{document}

\title[Energy Conserving Discretizations of Maxwell's Equations]{Energy Conserving Higher Order Mixed Finite Element Discretizations of Maxwell's Equations}

\author{Archana Arya and Kaushik Kalyanaraman}
\address{Department of Mathematics, Indraprastha Institute of Information Technology, Delhi, New Delhi, 110020, India}
\email{archanaa@iiitd.ac.in, kaushik@iiitd.ac.in}

\begin{abstract}
We study a system of Maxwell's equations that describes the time evolution of electromagnetic fields with an additional electric scalar variable to make the system amenable to a mixed finite element spatial discretization. We demonstrate stability and energy conservation for the variational formulation of this Maxwell's system. We then discuss two implicit, energy conserving schemes for its temporal discretization: the classical Crank-Nicholson scheme and an implicit leapfrog scheme. We next show discrete stability and discrete energy conservation for the semi-discretization using these two time integration methods. We complete our discussion by showing that the error for the full discretization of the Maxwell's system with each of the two implicit time discretization schemes and with spatial discretization through a conforming sequence of de Rham finite element spaces converges quadratically in the step size of the time discretization and as an appropriate polynomial power of the mesh parameter in accordance with the choice of approximating polynomial spaces. Our results for the Crank-Nicholson method are generally well known but have not been demonstrated for this Maxwell's system. Our implicit leapfrog scheme is a new method to the best of our knowledge and we provide a complete error analysis for it. Finally, we show computational results to validate our theoretical claims using linear and quadratic Whitney forms for the finite element discretization for some model problems in two and three spatial dimensions.
\end{abstract}

\keywords{Crank-Nicholson scheme, error analysis, finite element exterior calculus, higher order, implicit methods, leapfrog scheme, Maxwell's equations, structure preservation, Whitney forms}

\subjclass{5Q61, 65M06, 65M12, 65M15, 65M22, 65M60, 65Z05, 78M10}

\maketitle

\section{Introduction} \label{sec:introduction}

We study Maxwell's equations for the electromagnetic fields $E$ and $H$ posed as the following set of equations:
\begin{subequations}
  \begin{equation}
    \begin{aligned}
      \dfrac{\partial p}{\partial t} + \nabla \cdot \varepsilon E & = f_p \text{ in } \Omega \times (0, T], \\
      \nabla p + \varepsilon \dfrac{\partial E}{\partial t} - \nabla \times H &= f_E \text{ in } \Omega \times (0, T], \\
      \mu \dfrac{\partial H}{\partial t} + \nabla \times E &= f_H \text{ in } \Omega \times (0, T],
    \end{aligned} \label{eqn:maxwells_eqns}
  \end{equation}
  where $\Omega \subset \R^2/\R^3$ is a domain with Lipschitz boundary $\partial \Omega$ and with the following homogeneous boundary conditions:
  \begin{equation}
    p = 0,  E \times n = 0, H \cdot n = 0 \text{ on } \partial\Omega \times (0, T], \label{eqn:BCs}
  \end{equation}
  where $n$ is the unit outward normal to $\partial \Omega$, and with the following initial conditions:
  \begin{equation}
    p(x, 0) = p_0(x), E(x, 0) = E_0(x), \text{ and } H(x, 0) = H_0(x) \text{~for~} x \in \Omega. \label{eqn:ICs}
  \end{equation}
\end{subequations}
In these equations, $E(x, t)$ and $H(x, t)$ denote the electric and magnetic fields, respectively, and $p(x, t)$ is a physically fictitious term that is related to the time varying divergence of the electric field. We refer to all variables without the $x$ and $t$ descriptors when this is clear to avoid clutter. The additional electric variable facilitates a mixed formulation for the Maxwell's equations in the sense of finite element exterior calculus (FEEC) and allows for a direct approximation of $\nabla \cdot \varepsilon E$. This formulation was first introduced in \cite{AdPeZi2013} and further used in \cite{AdHuZi2017, AdCaHuZi2021}, and the authors there attribute it to Douglas Arnold who is one of the authors of several works in FEEC \cite{ArFaWi2010, Arnold2018}. We slightly generalize their $(p, E, H)$ formulation to allow for arbitrary right hand side functions $f_p(x, t), f_E(x, t)$ and $f_H(x, t)$ as shown in Equation~\eqref{eqn:maxwells_eqns}. Clearly, if $\nabla \cdot \varepsilon E = 0$, then with $p_0(x) = 0$, $f_p = 0$, $f_E = J(x, t)$ (which is the current density) and $f_H = 0$, we recover the more standard form of the Maxwell's equations. In our setup, we assume that $t \in [0, T]$ for some finite maximum time $T$. The material parameters $\varepsilon$ (electric permittivity) and $\mu$ (magnetic permeability) are assumed to be time independent and positive bounded piecewise constant functions, that is, there exist positive constants: $\varepsilon_{\min}, \varepsilon_{\max}, \mu_{\min}, \mu_{\max}$ satisfying $0 < \varepsilon_{\min} \le \varepsilon(x) \le \varepsilon_{\max} < \infty$ and $0 < \mu_{\min} \le \mu(x) \le \mu_{\max} < \infty$ for all $x \in \Omega$. Although this minor technical condition is necessary, for all intents and purposes, we treat these physical parameters as simply constants in our work here and these constants can then be taken to be either the inverse of the minimum values or the maximum values as appropriate in all our proofs if these parameters are spatially varying. Finally, we shall assume that the initial conditions provided satisfy:
\[
  \nabla \cdot (\varepsilon E_0) = p_0 , \text{~and~} \nabla \cdot (\mu H_0) = 0 \text{~in~} \Omega,
\]
to be consistent with our motivation for introducing $p$ in conjunction with $E$, and because magnetic fields are divergence free due to there being no magnetic charges.

The variational formulation for Equation~\eqref{eqn:maxwells_eqns} with boundary conditions in Equation~\eqref{eqn:BCs} is given by: find $(p, E, H) \in \mathring{H}^1_{\varepsilon^{-1}}(\Omega) \times \mathring{H}_{\varepsilon}(\curl; \Omega) \times \mathring{H}_{\mu}(\divgn; \Omega)$ such that:
\begin{subequations}
  \begin{alignat}{2}
    \aInnerproduct{\dfrac{\partial p}{\partial t}}{\widetilde{p}} - \aInnerproduct{ \varepsilon E}{\nabla \widetilde{p}} &= \aInnerproduct{f_p}{\widetilde{p}}, &&\quad \widetilde{p} \in \mathring{H}^1_{\varepsilon^{-1}}(\Omega), \label{eqn:maxwell_p_wf} \\
    \aInnerproduct{\nabla p}{\widetilde{E}} + \aInnerproduct{\varepsilon \dfrac{\partial E}{\partial t}}{\widetilde{E}} - \aInnerproduct{H}{\nabla \times \widetilde{E}} &= \aInnerproduct{f_E}{\widetilde{E}}, &&\quad \widetilde{E} \in \mathring{H}_{\varepsilon}(\curl; \Omega), \label{eqn:maxwell_E_wf} \\
    \aInnerproduct{\mu \dfrac{\partial H}{\partial t}}{\widetilde{H}} + \aInnerproduct{\nabla \times E}{\widetilde{H}}, &= \aInnerproduct{f_H}{\widetilde{H}}, &&\quad \widetilde{H} \in \mathring{H}_{\mu}(\divgn; \Omega), \label{eqn:maxwell_H_wf}
  \end{alignat}
\end{subequations}
for $t \in (0, T]$ with initial conditions as in Equation~\eqref{eqn:ICs}. For this problem, we shall use the following two time discretizations.

\smallskip \noindent \textbf{Crank-Nicholson scheme:} Find $(p(t^n), E(t^n), H(t^n)) \in \mathring{H}^1_{\varepsilon^{-1}}(\Omega) \times \mathring{H}_{\varepsilon}(\curl; \Omega) \times \mathring{H}_{\mu}(\divgn; \Omega)$ such that:
\begin{subequations}
  \begin{align}
    \aInnerproduct{\dfrac{p^n - p^{n - 1}}{\Delta t}}{\widetilde{p}} - \aInnerproduct{\dfrac{\varepsilon}{2} \left(  E^n +  E^{n - 1} \right)}{\nabla \widetilde{p}} &= \aInnerproduct{\dfrac{1}{2} \left( f_p^n + f_p^{n - 1} \right)}{\widetilde{p}},\label{eqn:maxwell_p_cn} \\
    \aInnerproduct{\dfrac{1}{2} \nabla \left( p^n + p^{n - 1} \right)}{\widetilde{E}} + \aInnerproduct{\varepsilon \dfrac{E^n - E^{n - 1}}{\Delta t}}{\widetilde{E}} - \aInnerproduct{\dfrac{1}{2} \left( H^n + H^{n - 1} \right)}{\nabla \times \widetilde{E}} &= \aInnerproduct{\dfrac{1}{2} \left(f_E^n + f_E^{n - 1} \right)}{\widetilde{E}}, \label{eqn:maxwell_E_cn}\\
    \aInnerproduct{\mu \dfrac{H^n - H^{n - 1}}{\Delta t}}{\widetilde{H}} + \aInnerproduct{\dfrac{1}{2} \nabla \times \left( E^n + E^{n - 1} \right)}{\widetilde{H}} &= \aInnerproduct{\dfrac{1}{2} \left( f_H^n + f_H^{n - 1} \right)}{\widetilde{H}}, \label{eqn:maxwell_H_cn}
  \end{align}
\end{subequations}
for all $\widetilde{p} \in \mathring{H}^1_{\varepsilon^{-1}}(\Omega), \; \widetilde{E} \in \mathring{H}_{\varepsilon}(\curl; \Omega)$ and $\widetilde{H} \in \mathring{H}_{\mu}(\divgn; \Omega)$, and with uniform discretization of $[0, T]$ as $t^n \coloneq n \Delta t$, $n = 0, 1, \dots, N$ for some $\Delta t > 0$ being the fixed time step size such that $N \Delta t = T$.

\smallskip \noindent \textbf{Implicit leapfrog scheme:} Find $(p(t^{n + 1/2}), E(t^{n + 1/2}), H(t^n)) \in \mathring{H}^1_{\varepsilon^{-1}}(\Omega) \times \mathring{H}_{\varepsilon}(\curl; \Omega) \times \mathring{H}_{\mu}(\divgn; \Omega)$ such that:
\begin{subequations}
\begin{align}
  \aInnerproduct{\dfrac{p^{n + \frac{1}{2}} - p^{n - \frac{1}{2}}}{\Delta t}}{\widetilde{p}} - \aInnerproduct{\dfrac{\varepsilon}{2} \left( E^{n + \frac{1}{2}} + E^{n - \frac{1}{2}} \right)}{\nabla \widetilde{p}} &= \aInnerproduct{ f_p^{n}}{\widetilde{p}}, \label{eqn:maxwell_p_lf} \\
  \aInnerproduct{\dfrac{1}{2} \nabla \left(p^{n + \frac{1}{2}} + p^{n - \frac{1}{2}} \right)}{\widetilde{E}} + \aInnerproduct{\varepsilon \dfrac{E^{n + \frac{1}{2}} - E^{n - \frac{1}{2}}}{\Delta t}}{\widetilde{E}} - \aInnerproduct{\dfrac{1}{2} \left( H^{n + 1} + H^n \right)}{\nabla \times \widetilde{E}} &= \aInnerproduct{f_E^n}{\widetilde{E}}, \label{eqn:maxwell_E_lf} \\
  \aInnerproduct{\mu \dfrac{H^{n + 1} - H^n}{\Delta t}}{\widetilde{H}} +  \aInnerproduct{\dfrac{1}{2} \nabla \times \left( E^{n + \frac{1}{2}} + E^{n - \frac{1}{2}} \right)}{\widetilde{H}} &= \aInnerproduct{f_H^{n + \frac{1}{2}}}{\widetilde{H}}, \label{eqn:maxwell_H_lf}
\end{align}
\end{subequations}
for all $\widetilde{p} \in \mathring{H}^1_{\varepsilon^{-1}}(\Omega), \; \widetilde{E} \in \mathring{H}_{\varepsilon}(\curl; \Omega)$ and $\widetilde{H} \in \mathring{H}_{\mu}(\divgn; \Omega)$, and with two uniform staggered discretizations of $[0, T]$: for the variables $p$ and $E$ as $t^{n + \frac{1}{2}} \coloneq (n + 1/2) \Delta t$, $n = 0, 1, \dots, N - 1$ for some $\Delta t > 0$ being the fixed time step size such that $N \Delta t = T$, and for the variable $H$ as $t^n \coloneq n \Delta t$, $n = 0, 1, \dots, N$. Clearly, our method is inspired by the classical (but explicit) leapfrog scheme, and we obtain $p, E$ at half time indices and $H$ at integer time indices. We propose this semi discretization by evaluating all innerproduct terms in Equations~\labelcref{eqn:maxwell_p_wf,eqn:maxwell_E_wf,eqn:maxwell_H_wf} for $p$ and $E$ at $t^n$ and for $H$ at $t^{n+\frac{1}{2}}$. To bootstrap this scheme using the given initial values at $t = 0$ as in Equation~\eqref{eqn:ICs}, we shall use the following discretization for the first half time step for $p$ and $E$, and for the first time step for $H$:
\begin{subequations}
  \begin{align}
    \aInnerproduct{\dfrac{p^{\frac{1}{2}} - p_0}{\Delta t/2}}{\widetilde{p}} - \dfrac{1}{2} \aInnerproduct{\dfrac{\varepsilon}{2} \left( E^{\frac{1}{2}} + E_0 \right)}{\nabla \widetilde{p}} &= \aInnerproduct{ f_p^0}{\widetilde{p}}, \label{eqn:maxwell_p0_lf} \\
  \dfrac{1}{2}  \aInnerproduct{\dfrac{1}{2} \nabla \left(  p^{\frac{1}{2}} +  p_0 \right)}{\widetilde{E}} + \aInnerproduct{\varepsilon \dfrac{E^{\frac{1}{2}} - E_0}{\Delta t/2}}{\widetilde{E}} - \aInnerproduct{\dfrac{1}{2} \left( H^1 + H_0 \right)}{\nabla \times \widetilde{E}} &= \aInnerproduct{f_E^0}{\widetilde{E}}, \label{eqn:maxwell_E0_lf} \\
    \aInnerproduct{\mu \dfrac{H^1 - H_0}{\Delta t}}{\widetilde{H}} + \dfrac{1}{2} \aInnerproduct{\dfrac{1}{2} \nabla \times \left(E^{\frac{1}{2}} + E_0 \right)}{\widetilde{H}} &= \aInnerproduct{f_H^{\frac{1}{2}}}{\widetilde{H}}. \label{eqn:maxwell_H0_lf}
  \end{align}
\end{subequations}
In all the time discretizations shown in Equations~\labelcref{eqn:maxwell_p_cn,eqn:maxwell_E_cn,eqn:maxwell_H_cn,eqn:maxwell_p_lf,eqn:maxwell_E_lf,eqn:maxwell_H_lf,eqn:maxwell_p0_lf,eqn:maxwell_E0_lf,eqn:maxwell_H0_lf}, we use a superscript on our variables of interest to denote their values at the time corresponding to this time index, for example, $p^n \coloneq p(t^n, x)$ and so on.

\subsection{Antecedents} \label{sec:antecedents}

A system of Maxwell's equations such as in Equation~\eqref{eqn:maxwells_eqns} has an exceedingly rich history of solution methodologies and we will not be able to justly point out all contributions, nevertheless we shall highlight some of the important ones biased and limited by our knowledge and depth of historical understanding. The field of computational electromagnetics probably had its first big break with the development of the finite difference time domain (FDTD) scheme in~\cite{Yee1966} which led to further developments later on in mimetic finite difference schemes for Maxwell's equations~\cite{HySh1999, Bossavit2001, LiMaSh2014, AdCaHuZi2021}. Posing Maxwell's equations in the language of differential forms~\cite{Deschamps1981}, and a combinatorial and geometric discretization of exterior calculus as discrete exterior calculus (DEC)~\cite{Hirani2003} has led to a plethora of methods for Maxwell's equations such as direct geometric discretizations popularized by~\cite{Tonti2001, Tonti2001a, Tonti2002}, and solutions of Maxwell's equations using DEC by~\cite{Rabina2014, RaMoRo2015, ChCh2017, SaPo2018, ZhNaJiCh2023} to illustrate a few. Then there are time domain finite element methods for the solution of Maxwell's equations in various transient and time harmonic formulations. We begin tracing their history from works by~\cite{Nedelec1978, Nedelec1980, Nedelec1986} which introduced in the context of computational electromagnetics what we now refer to as Whitney forms. Then there are several contribution by Monk such as~\cite{Monk1992, Monk1992a, MaMo1995}, his book~\cite{Monk2003} and the tutorial-style article~\cite{ChMo2012} which have provided analyses for finite element methods for time domain Maxwell's equations. Bossavit's popularization of the ``rationale for using Whitney elements'' in computational electromagnetism through his various works over the decades~\cite{Bossavit1988, Bossavit1988a, Bossavit1990, Bossavit1992}, via his book~\cite{Bossavit1998a} as well as his insights on the discretization of electromagnetism posed via calculus on manifolds~\cite{Bossavit1991, Bossavit1998, Bossavit2010} have inspired several newer generations of researchers in this field like us. We cannot miss pointing to Hiptmair's works~\cite{Hiptmair1999, Hiptmair2002, Hiptmair2015}, Christiansen's works in this context such as~\cite{Christiansen2009}, and many other broader finite element exterior calculus works such as~\cite{RaBo2009, RaBo2009a, CHMuOw2011_AN, BoRa2014, ChRa2016} and all of which provide the foundations on which our current work stands. We finally wish to point out that error analysis for a full discretization of Maxwell's equations using finite elements in space and finite difference methods in time are still being analysed and clarified for even simpler implicit schemes such as backward Euler for time differencing as in~\cite{AnAn2019}. Our work presented here is in similar spirit to fill the gap for the analysis of some second-order accurate implicit time discretizations including providing a new implicit method for discretization of Maxwell's equations.

The rest of our paper is organized as follows. In Section~\ref{sec:prelim}, we recall some of the basics needed for our work. In Section~\ref{sec:stability_wf}, we prove the stability of solution for the variational formulation outlined in Equations~\labelcref{eqn:maxwell_p_wf,eqn:maxwell_E_wf,eqn:maxwell_H_wf} and therefore infer its existence and uniqueness. In Section~\ref{sec:time_disc_stab}, we present our discrete stability and energy conservation results for the Crank-Nicholson discretization in Equations~\labelcref{eqn:maxwell_p_cn,eqn:maxwell_E_cn,eqn:maxwell_H_cn} and for the implicit leapfrog scheme in Equations~\labelcref{eqn:maxwell_p_lf,eqn:maxwell_E_lf,eqn:maxwell_H_lf,eqn:maxwell_p0_lf,eqn:maxwell_E0_lf,eqn:maxwell_H0_lf}. We state and prove our theorems on the convergence of the errors for a full discretization using higher order Whitney finite elements for these two time discretizations in Section~\ref{sec:fllerrrestmts}. Finally, in Section~\ref{sec:numerics}, we present some illustrative computational results on model problems in $\R^2$ and $\R^3$ to both validate our various theoretical results and demonstrate the feasibility of performing computations.

\section{Preliminaries} \label{sec:prelim}

We present here bare minimum basics needed for a ready reference and discussion of the various theorems and results in our work. We have already pointed to several excellent sources for FEEC and time domain finite element methods for computational electromagnetics in Section~\ref{sec:antecedents}. We are indebted to those foundations and have tried to liberally borrow their overall proof methodologies and general proof strategies for presenting our contributions here.

We begin by first stating the various Hilbert spaces that we shall use as our function spaces in this work. To begin to do so, we first state that the innerproduct on our Hilbert space $(U, \aInnerproduct{\cdot}{\cdot}_{\alpha})$ is weighted in the following way: for any $u, v \in U$ and a positive real number $\alpha$, $\aInnerproduct{u}{v}_{\alpha} \coloneq \aInnerproduct{\alpha u}{v}$. Consequently, for the same positive real number we have that $\norm{u}_{\alpha}^2 \coloneq \aInnerproduct{u}{u}_{\alpha}$. Next, we note that we do not distinguish between scalar- and vector-valued points or functions by their typeface and leave it to the context to make it clear as we find this choice visually friendly. Therefore, instead of $x \in \R$ or $\boldsymbol{x} \in \R^n$, we simply write $x \in \Omega$ and likewise $f$ may represent either a scalar- or vector-valued function. Consequently, we also do not distinguish in notation function spaces defined over regular (or smooth) scalar or vector functions and simply use $C^r(\Omega)$ where $r$ is the regularity to denote such functions. Thus, $L^2(\Omega)$ can mean the space of either scalar- or vector-valued functions with a bounded $L^2$ norm. With these reading provisos, for a positive number $\alpha$ we denote by $L_\alpha^2$ the set of functions which have a bounded $L^2$ norm $\norm{\cdot}_{\alpha}$. We next define for our purposes the various function spaces which we need in our work here.

\begin{align*}
  H_{\varepsilon^{-1}}^1(\Omega) &\coloneq \{u \in L^2_{\varepsilon^{-1}}(\Omega) \mid \nabla u \in L^2_{\varepsilon^{-1}}(\Omega) \}, \\
  \mathring{H}_{\varepsilon^{-1}}^1(\Omega) &\coloneq \{u \in H^1_{\varepsilon^{-1}}(\Omega) \mid u = 0 \text{ on } \partial \Omega) \}, \\
  H_{\varepsilon}(\curl; \Omega) &\coloneq \{u \in L^2_\varepsilon(\Omega) \mid \nabla \times u \in L^2_\varepsilon(\Omega) \}, \\
  \mathring{H}_{\varepsilon}(\curl; \Omega) &\coloneq \{u \in H_\varepsilon(\curl; \Omega) \mid u \times n = 0 \text{ on } \partial \Omega \}, \\
  H_{\mu}(\divgn; \Omega) &\coloneq \{u \in L^2_\mu(\Omega) \mid \nabla \cdot u \in L^2_\mu(\Omega) \}, \\
  \mathring{H}_{\mu}(\divgn; \Omega) &\coloneq \{u \in H_\mu(\divgn; \Omega) \mid u \cdot n = 0 \text{ on } \partial \Omega \},
\end{align*}

\subsection{de Rham Complex Finite Elements}

We refer to \cite{ArFaWi2006,ArFaWi2010,Arnold2018} for conforming finite element spaces forming a de Rham complex as also to \cite{CHMuOw2011_AN,ChRa2016} for various details pertaining to the construction of finite element systems, the Whitney forms basis, and the cannonical and smoothed projection operators. We wish to simply summarize what we need by stating the following two commuting diagrams of de Rham complexes of the function spaces and their discretizations in $\R^2$ and $\R^3$. We wish to note that $\mathcal{P}_r^-$ denotes the degree $r$ Whitney forms which may then be interepreted to mean scalar Lagrange basis or vector Nedelec or Raviart-Thomas elements. Also, in these diagrams $\Pi_h$ represents the appropriate cannonical or smoothed projection to provide us with the necessary approximability results~\cite[see][Theorems 5.3, 5.6]{ArFaWi2006}.

\begin{center}
  \begin{tikzpicture}[baseline=(a).base]
    \node[scale=.9] (a) at (0,0){
      \begin{tikzcd}[column sep=3em, row sep=2em, text height=1ex, text depth=0ex, ampersand replacement=\&, every
        label/.style={font=\small, auto}]
        \mathring{H}_{\varepsilon^{-1}}^1 (\Omega) \arrow[r, -stealth, shift left, "\rot"] \arrow[d, -stealth, "\Pi_h^0"] \& \arrow[l, -stealth, shift left, "\curl"] \mathring{H}_{\varepsilon} (\divgn, \Omega) \arrow[r, -stealth, shift left, "-\divgn"] \arrow[d, -stealth, "\Pi_h^1"] \& \arrow[l, -stealth, shift
        left, "\grad"] L_{\mu}^2(\Omega) \arrow[d, -stealth, "\Pi_h^2"]  \\[2em] 
        \mathring{H}_{\varepsilon^{-1}}^1(\Omega) \hspace{.5em}  \arrow[r, -stealth, shift left, yshift = -.7em, "\rot"]  \& \arrow[l, -stealth, shift left, yshift = -.7em, "\curl"] \mathring{H}_{\varepsilon} (\divgn, \Omega) \arrow[r, -stealth, shift
        left, yshift = -.7em, "-\divgn"] \& \arrow[l, -stealth, shift
        left, yshift = -.7em, "\grad"] \hspace{.5em} L_{\mu}^2(\Omega) \\[-2em]
        \cap \, \mathcal{P}_r^-(\Omega) \hspace{.5em}\& \cap \, \mathcal{P}_r^-(\Omega) \& \hspace{.5em} \cap \, \mathcal{P}_r^-(\Omega) 
      \end{tikzcd}
    };
  \end{tikzpicture}
\end{center}

\begin{center}
  \begin{tikzpicture}[baseline=(a).base]
    \node[scale=.9] (a) at (0,0){
      \begin{tikzcd}[column sep=3em, row sep=2em, text height=1ex, text depth=0ex, ampersand replacement=\&, every
        label/.style={font=\small, auto}]
        \mathring{H}_{\varepsilon^{-1}}^1 (\Omega) \arrow[r, -stealth, shift left, "\grad"] \arrow[d, -stealth, "\Pi_h^0"] \& \arrow[l, -stealth, shift left, "-\divgn"] \mathring{H}_{\varepsilon} (\curl, \Omega) \arrow[r, -stealth, shift left, "\curl"]  \arrow[d, -stealth, "\Pi_h^1"] \& \arrow[l, -stealth, shift left, "\curl"] \mathring{H}_{\mu} (\divgn, \Omega) \arrow[r, -stealth, shift left, "-\divgn"]  \arrow[d, -stealth, "\Pi_h^2"] \& \arrow[l, -stealth, shift left, "\grad"] L^2(\Omega) \arrow[d, -stealth, "\Pi_h^3"] \\[2em]
        \mathring{H}_{\varepsilon^{-1}}^1(\Omega) \hspace{.5em} \arrow[r, -stealth, shift left, yshift = -.7em, "\grad"] \& \arrow[l, -stealth, shift left, yshift = -.7em, "-\divgn"] \mathring{H}_{\varepsilon} (\curl, \Omega) \arrow[r, -stealth, yshift = -.7em, shift left, "\curl"]  \& \arrow[l, -stealth, yshift = -.7em, shift left, "\curl"] \mathring{H}_{\mu} (\divgn, \Omega) \arrow[r, -stealth, yshift = -.7em, shift left, "-\divgn"]  \& \arrow[l, -stealth, yshift = -.7em, shift left, "\grad"]  \hspace{.5em} L^2(\Omega) \\[-2em]
        \cap \, \mathcal{P}_r^-(\Omega) \hspace{.5em}\& \cap \, \mathcal{P}_r^-(\Omega) \& \cap \, \mathcal{P}_r^-(\Omega) \& \hspace{.5em} \cap \, \mathcal{P}_r^-(\Omega)
      \end{tikzcd}
    };
  \end{tikzpicture}
\end{center}

\subsection{Algebraic Identities and Inequalities}

In the following, we shall take $u, v$ to be in a Hilbert space $(U, \ainnerproduct{\cdot}{\cdot}_{\alpha})$ for a fixed positive scalar $\alpha$, and $a$ and $b$ to be nonnegative real numbers. 
\begin{description}
\item[Triangle] $\norm{u + v}_{\alpha} \le \norm{u}_{\alpha} + \norm{v}_{\alpha}$.
  
\item[Cauchy-Schwarz] $\aInnerproduct{u}{v}_{\alpha} \le \norm{u}_{\beta^{-1}\alpha} \norm{v}_{\beta \alpha}$ for a nonnegative scalar $\beta$.
  
\item[Polarization identity] $2 \aInnerproduct{u - v}{u}_{\alpha} = \norm{u}_{\alpha}^2 + \norm{u - v}_{\alpha}^2 - \norm{v}_\alpha^2$.

\item[Polarization inequality] $2 \aInnerproduct{u}{v}_{\alpha} \le \beta^{-1} \norm{u}_{\alpha}^2 + \beta \norm{v}_{\alpha}^2$ for a nonnegative scalar $\beta$.

\item[Arithmetic mean-geometric mean (AM-GM)] $\sqrt{a b}\le \dfrac{a + b}{2} \implies \left( a + b \right)^2 \le 2 \left( a^2 + b^2 \right)$.
\end{description}

\subsection{Gronwall-OuLang Inequalities}

A recurring theme in the various proofs for establishment of stability and error convergence for time discretization of the Maxwell's system is Gronwall's inequality. We state below the versions of it most pertinent to our usage in both the smooth and discrete settings. We wish to also note the relation of Gronwall's inequality in the smooth setting to the inequality in~\cite{OuLang1957}, and therefore we refer to these results in our work here together as Gronwall-OuLang inequality.

\begin{lemma}[OuLang, \cite{OuLang1957}] \label{lemma:oulang}
  Let $u$ and $f$ be real-valued nonnegative continuous functions defined for all $t \ge 0$. If:
  \[
    u^2(t) \le c^2 + 2 \int\limits_0^t f(s) u(s) ds \text{ for all } t \ge 0,
  \]
  where $c \ge 0$ is a constant, then:
  \[
    u(t) \le c + \int\limits_0^t f(s) ds \text{ for all } t \ge 0.
  \]
  
\end{lemma}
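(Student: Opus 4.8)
The plan is to reduce the integral inequality to a differential inequality for an auxiliary function and then integrate. First I would set $\phi(t) \coloneq c^2 + 2\int_0^t f(s) u(s)\, ds$, so that the hypothesis reads $u^2(t) \le \phi(t)$ and, since $f u$ is continuous, the fundamental theorem of calculus gives that $\phi$ is continuously differentiable with $\phi'(t) = 2 f(t) u(t)$. The difficulty is that $\phi$ may vanish (for instance if $c = 0$ and $f \equiv 0$ near $0$), which would obstruct dividing by $\sqrt{\phi}$; to sidestep this I would introduce, for each $\varepsilon > 0$, the strictly positive function $\phi_\varepsilon(t) \coloneq c^2 + \varepsilon + 2\int_0^t f(s) u(s)\, ds \ge \varepsilon$, which is again continuously differentiable with $\phi_\varepsilon'(t) = 2 f(t) u(t)$.

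Next, using $u(t) \le \sqrt{\phi(t)} \le \sqrt{\phi_\varepsilon(t)}$ (valid because $u \ge 0$ and $\phi \le \phi_\varepsilon$), I would estimate $\phi_\varepsilon'(t) = 2 f(t) u(t) \le 2 f(t) \sqrt{\phi_\varepsilon(t)}$. Dividing by $2\sqrt{\phi_\varepsilon(t)} > 0$ yields $\frac{d}{dt}\sqrt{\phi_\varepsilon(t)} \le f(t)$ for all $t \ge 0$. Integrating this from $0$ to $t$ and using $\phi_\varepsilon(0) = c^2 + \varepsilon$ gives $\sqrt{\phi_\varepsilon(t)} \le \sqrt{c^2 + \varepsilon} + \int_0^t f(s)\, ds$, and hence $u(t) \le \sqrt{c^2 + \varepsilon} + \int_0^t f(s)\, ds$.

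Finally, since $\varepsilon > 0$ is arbitrary and $\sqrt{c^2 + \varepsilon} \to c$ as $\varepsilon \downarrow 0$, letting $\varepsilon \to 0$ delivers $u(t) \le c + \int_0^t f(s)\, ds$ for all $t \ge 0$, which is the claim. The only subtle point in the whole argument is the $\varepsilon$-regularization that keeps the auxiliary function bounded away from zero so that the chain-rule manipulation $\frac{d}{dt}\sqrt{\phi_\varepsilon} = \frac{\phi_\varepsilon'}{2\sqrt{\phi_\varepsilon}}$ is legitimate; everything else is the fundamental theorem of calculus together with a monotone passage to the limit.
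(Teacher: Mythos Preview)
Your proof is correct and complete; the $\varepsilon$-regularization cleanly handles the possible vanishing of the auxiliary function, and the rest is a routine application of the fundamental theorem of calculus. Note, however, that the paper does not actually prove this lemma: it is stated with a citation to \cite{OuLang1957} and used as a black box, so there is no in-paper proof to compare against. Your argument is the standard one and would serve perfectly well as a self-contained justification.
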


\begin{lemma}[Gronwall, {\cite[Lemma 1.4.1]{QuVa1994}}] \label{lemma:gronwall}
  Let $f \in L^1(t_0, T)$ be a nonnegative function, $g$ and $u$ be continuous functions on $[t_0, T]$. If $u$ satisfies:
  \[
    u(t) \le g(t) + \int\limits_{t_0}^t f(s) u(s) ds \text{ for all } t \in [t_0, T],
  \]
  then:
  \[
    u(t) \le g(t) + \int\limits_{t_0}^t f(s) g(s) \exp\left( \int\limits_{s}^t f(\tau) d\tau \right) \text{ for all } t \in [t_0, T].
  \]
  Moreover, if $g$ is nondecreasing, then:
  \[
    u(t) \le g(t) \exp \left( \int\limits_{t_0}^t f(\tau) d\tau \right) \text{ for all } t \in [t_0, T].
  \]  
\end{lemma}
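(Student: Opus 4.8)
The plan is to convert the integral inequality into a differential inequality for the accumulated term $\int_{t_0}^t f u$, dispatch that with an integrating factor, and then read off the ``moreover'' statement from the monotonicity of $g$ together with an explicit antiderivative.

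First I would define $v(t) = \int_{t_0}^t f(s) u(s)\, ds$. Since $u$ is continuous on the compact interval $[t_0, T]$ it is bounded there, and $f \in L^1(t_0, T)$, so $f u \in L^1(t_0, T)$ and $v$ is absolutely continuous on $[t_0, T]$ with $v(t_0) = 0$ and $v'(t) = f(t) u(t)$ for almost every $t$. The hypothesis $u(t) \le g(t) + v(t)$ together with $f \ge 0$ then yields the differential inequality $v'(t) \le f(t) g(t) + f(t) v(t)$ for a.e.\ $t \in [t_0, T]$.

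Next I would introduce the positive, absolutely continuous integrating factor $\mu(t) = \exp\bigl(-\int_{t_0}^t f(\tau)\, d\tau\bigr)$ and observe that $\frac{d}{dt}\bigl(\mu(t) v(t)\bigr) = \mu(t)\bigl(v'(t) - f(t) v(t)\bigr) \le \mu(t) f(t) g(t)$ a.e. Integrating from $t_0$ to $t$ and using $v(t_0) = 0$ gives $\mu(t) v(t) \le \int_{t_0}^t \mu(s) f(s) g(s)\, ds$; dividing by $\mu(t)$ and using $\mu(s)/\mu(t) = \exp\bigl(\int_s^t f(\tau)\, d\tau\bigr)$ produces $v(t) \le \int_{t_0}^t f(s) g(s)\exp\bigl(\int_s^t f(\tau)\, d\tau\bigr)\, ds$. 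Adding $g(t)$ and invoking $u \le g + v$ gives the first bound. For the second part, if $g$ is nondecreasing then $g(s) \le g(t)$ for $s \le t$, so the last integral is at most $g(t)\int_{t_0}^t f(s)\exp\bigl(\int_s^t f(\tau)\, d\tau\bigr)\, ds$; since $s \mapsto -\exp\bigl(\int_s^t f(\tau)\, d\tau\bigr)$ is an antiderivative of the integrand, this evaluates to $g(t)\bigl(\exp\bigl(\int_{t_0}^t f(\tau)\, d\tau\bigr) - 1\bigr)$, and combining once more with $u \le g + v$ gives $u(t) \le g(t)\exp\bigl(\int_{t_0}^t f(\tau)\, d\tau\bigr)$.

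The main obstacle is purely technical: justifying the product rule and the fundamental theorem of calculus for $\mu v$ under the weak hypothesis $f \in L^1$, where $v$ is only absolutely continuous and the pointwise inequalities hold merely almost everywhere. If one prefers to avoid the measure-theoretic bookkeeping, an alternative is a Picard-type iteration: substitute the hypothesis into itself $n$ times, apply Fubini--Tonelli to collapse the iterated integrals of $f$ into $\frac{1}{n!}\bigl(\int_{t_0}^t f\bigr)^n$, bound the remainder term by $\frac{1}{n!}\bigl(\int_{t_0}^t f\bigr)^n \sup_{[t_0,T]}|u| \to 0$, and identify the resulting convergent series with the claimed exponential bound. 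Either way the argument is classical, which is why we merely attribute the statement to \cite[Lemma 1.4.1]{QuVa1994}.
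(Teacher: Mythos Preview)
Your argument is correct and is the standard integrating-factor proof of Gronwall's inequality; the handling of the ``moreover'' clause via the explicit antiderivative $s \mapsto -\exp\bigl(\int_s^t f\bigr)$ is clean, and your remarks on the measure-theoretic justification (absolute continuity of $v$ and of the product $\mu v$) are appropriate for the stated regularity $f \in L^1$. Note, however, that the paper does not actually prove this lemma: it is stated with attribution to \cite[Lemma 1.4.1]{QuVa1994} and used as a black box, exactly as you yourself observe in your closing sentence. So there is no paper proof to compare against; your write-up simply supplies what the paper omits.
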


\begin{lemma}[Discrete Gronwall, {\cite[Lemma 2]{AnAn2019}}] \label{lemma:gronwall_dscrt}
  Let $\delta \ge 0$, $g_0 \ge 0$ and $(a_n)$, $(b_n)$, $(c_n)$ and $(\gamma_n)$ be sequences of nonnegative numbers such that:
  \[
    a_N + \delta \sum\limits_{n = 0}^N b_n \le \delta \sum\limits_{n = 0}^{N} \left( \gamma_n a_n + c_n \right) + g_0 \text{ for all } N = 0, 1, \dots.
  \]
  Assuming that $\gamma_n \delta < 1$ for all $n$ and setting $\sigma_n \coloneq \left( 1 - \gamma_n \delta \right)^{-1}$, for all $N \ge 0$ we have that:
  \[
    a_N + \delta \sum\limits_{n = 0}^N b_n \le \left( \delta \sum\limits_{n = 0}^N c_n + g_0 \right) \exp \left( \delta \sum\limits_{n = 0}^N \sigma_n \gamma_n \right).
  \]
\end{lemma}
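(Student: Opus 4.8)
The plan is to reduce the stated conclusion to a single self-referential estimate and then close it by induction on $N$. First I would introduce the aggregated quantities $A_N \coloneq a_N + \delta \sum_{n=0}^N b_n$ and $G_N \coloneq g_0 + \delta \sum_{n=0}^N c_n$, noting that $(G_N)$ is a nondecreasing sequence of nonnegative numbers. Since every $b_n \ge 0$ we have $a_n \le A_n$, so the hypothesis immediately gives the cleaner inequality
\[
  A_N \le \delta \sum_{n=0}^N \gamma_n A_n + G_N, \qquad N = 0, 1, \dots.
\]

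Next I would peel off the top-index term, writing $\delta \sum_{n=0}^N \gamma_n A_n = \gamma_N \delta \, A_N + \delta \sum_{n=0}^{N-1} \gamma_n A_n$, and use the hypothesis $\gamma_N \delta < 1$ to divide through by $1 - \gamma_N \delta$. With $\sigma_N = (1 - \gamma_N \delta)^{-1}$ this yields
\[
  A_N \le \sigma_N G_N + \sigma_N \delta \sum_{n=0}^{N-1} \gamma_n A_n,
\]
which is now in a shape amenable to induction. The target of the induction is the product bound
\[
  A_N \le \sigma_N G_N \prod_{j=0}^{N-1} \bigl( 1 + \sigma_j \gamma_j \delta \bigr),
\]
where the empty product (case $N = 0$) is read as $1$, so the base case is just $A_0 \le \sigma_0 G_0$, which the previous display gives directly.

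For the inductive step I would substitute the inductive hypothesis for each $A_n$ with $n \le N - 1$ into the displayed estimate and bound $G_n \le G_N$ using monotonicity. Setting $P_n \coloneq \prod_{j=0}^{n-1} ( 1 + \sigma_j \gamma_j \delta )$, the key observation is the telescoping identity $\sigma_n \gamma_n \delta \, P_n = P_{n+1} - P_n$, whence $\delta \sum_{n=0}^{N-1} \gamma_n \sigma_n P_n = P_N - 1$. Feeding this back collapses the bound to exactly $A_N \le \sigma_N G_N P_N$, completing the induction.

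Finally I would convert the product into the claimed exponential using the elementary inequalities $1 + x \le e^x$ for the factors indexed by $j \le N - 1$, together with $\sigma_N = (1 - \gamma_N \delta)^{-1} \le \exp(\sigma_N \gamma_N \delta)$, the latter following from $1 + y \le e^y$ applied to $y = \sigma_N \gamma_N \delta$ since $(1 - x)^{-1} = 1 + x/(1 - x)$. Multiplying these estimates gives $\sigma_N \prod_{j=0}^{N-1}( 1 + \sigma_j \gamma_j \delta ) \le \exp\bigl( \delta \sum_{j=0}^N \sigma_j \gamma_j \bigr)$, and recalling the definitions of $A_N$ and $G_N$ reproduces the stated inequality. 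The only genuinely delicate point, and the one place where the hypotheses must be used precisely as given, is the division by $1 - \gamma_N \delta$, which requires the strict bound $\gamma_n \delta < 1$; the remaining steps are bookkeeping with nonnegative terms, monotonicity of $(G_N)$, and the telescoping sum.
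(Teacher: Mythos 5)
Your proof is correct. Note that the paper does not actually prove this lemma --- it is quoted verbatim from Lemma~2 of the cited reference \cite{AnAn2019} --- so there is no in-paper argument to compare against; your self-contained derivation (absorb the diagonal term $\gamma_N\delta A_N$ using $\gamma_N\delta<1$, induct to the product bound $A_N\le\sigma_N G_N\prod_{j=0}^{N-1}(1+\sigma_j\gamma_j\delta)$, then pass to the exponential via $1+x\le e^x$ and $\sigma_N\le\exp(\sigma_N\gamma_N\delta)$) is the standard route and every step, including the telescoping identity and the use of monotonicity of $G_N$, checks out. As a small bonus, your intermediate product estimate is slightly sharper than the exponential form stated in the lemma.
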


\section{Stability of Variational Formulation} \label{sec:stability_wf}

We first demonstrate the existence and uniqueness of the solution to the variational formulation of the Maxwell's system as presented in Equations~\labelcref{eqn:maxwell_p_wf,eqn:maxwell_E_wf,eqn:maxwell_H_wf}, To do so, we have the following energy estimate for it.

\begin{theorem}[Energy Estimate] \label{thm:smth_enrgy_estmt}
  Let $f_p \in L^1[0, T] \times L_{\varepsilon^{-1}}^2(\Omega)$, $f_E \in L^1[0, T] \times L^2_{\varepsilon^{-1}}(\Omega)$, and $f_H \in L^1[0, T] \times L^2_{\mu^{-1}}(\Omega)$. Then the solution $(p, E, H)$ of Equations~\labelcref{eqn:maxwell_p_wf,eqn:maxwell_E_wf,eqn:maxwell_H_wf} with initial conditions as in Equation~\eqref{eqn:ICs} and assuming sufficient regularity with $p \in C^1[0, T] \times \mathring{H}^1_{\varepsilon^{-1}}(\Omega), \; E \in C^1[0, T] \times \mathring{H}_{\varepsilon}(\curl; \Omega)$, and $H \in C^1[0, T] \times \mathring{H}_{\mu}(\divgn; \Omega)$ satisfies:
  \begin{multline*}
    \norm{p}_{\varepsilon^{-1}} + \norm{E}_{\varepsilon} + \norm{H}_{\mu} \le \sqrt{3} \Big[ \norm{p_0}_{\varepsilon^{-1}} + \norm{E_0}_{\varepsilon} + \norm{H_0}_{\mu} \, + \\
    \norm{f_p}_{L^1[0, T] \times L^2_{\varepsilon^{-1}}(\Omega)} + \norm{f_E}_{L^1[0, T] \times L^2_{\varepsilon^{-1}}(\Omega)} + \norm{f_H}_{L^1[0, T] \times L^2_{\mu^{-1}}(\Omega)} \Big].
  \end{multline*}
\end{theorem}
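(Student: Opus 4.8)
The plan is to derive an energy identity by testing each of Equations~\eqref{eqn:maxwell_p_wf}, \eqref{eqn:maxwell_E_wf} and \eqref{eqn:maxwell_H_wf} against the corresponding component of the solution, and then to close the resulting quadratic differential inequality with the OuLang inequality of \Cref{lemma:oulang}. Concretely, I would set $\widetilde{p} = p$ in \eqref{eqn:maxwell_p_wf}, $\widetilde{E} = E$ in \eqref{eqn:maxwell_E_wf} and $\widetilde{H} = H$ in \eqref{eqn:maxwell_H_wf} (each in the inner product natural to that component's space), and add the three identities. The defining feature of this $(p, E, H)$ mixed formulation is that the off-diagonal coupling contributions occur in skew-adjoint pairs: the term involving $\aInnerproduct{\varepsilon E}{\nabla p}$ from the $p$-equation cancels the $\aInnerproduct{\nabla p}{E}$ term from the $E$-equation, and the $\aInnerproduct{H}{\nabla \times E}$ term from the $E$-equation cancels the $\aInnerproduct{\nabla \times E}{H}$ term from the $H$-equation. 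Using that $\varepsilon, \mu$ are time-independent and that $p, E, H$ are $C^1$ in time with values in their respective spaces, the surviving diagonal terms combine into a total time derivative, giving the energy balance
\[
  \frac{1}{2}\frac{d}{dt}\Big( \norm{p}_{\varepsilon^{-1}}^2 + \norm{E}_{\varepsilon}^2 + \norm{H}_{\mu}^2 \Big) = \aInnerproduct{f_p}{p}_{\varepsilon^{-1}} + \aInnerproduct{f_E}{E} + \aInnerproduct{f_H}{H},
\]
which in particular recovers exact energy conservation when $f_p = f_E = f_H = 0$.

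Next I would bound the right-hand side. Writing $\mathcal{E}(t) \coloneq \big( \norm{p}_{\varepsilon^{-1}}^2 + \norm{E}_{\varepsilon}^2 + \norm{H}_{\mu}^2 \big)^{1/2}$ and $F(t) \coloneq \norm{f_p(t)}_{\varepsilon^{-1}} + \norm{f_E(t)}_{\varepsilon^{-1}} + \norm{f_H(t)}_{\mu^{-1}}$, the weighted Cauchy--Schwarz inequality from the Preliminaries (choosing the auxiliary weight $\beta$ as $\varepsilon$ or $\mu$ so that, e.g., $\aInnerproduct{f_E}{E} \le \norm{f_E}_{\varepsilon^{-1}}\norm{E}_{\varepsilon}$ and $\aInnerproduct{f_H}{H} \le \norm{f_H}_{\mu^{-1}}\norm{H}_{\mu}$) together with Cauchy--Schwarz in $\R^3$ bounds the right-hand side by $F(t)\,\mathcal{E}(t)$. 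Hence $\frac{d}{dt}\mathcal{E}^2(t) \le 2 F(t)\,\mathcal{E}(t)$, and integrating in time, using $\mathcal{E}^2(0) = \norm{p_0}_{\varepsilon^{-1}}^2 + \norm{E_0}_{\varepsilon}^2 + \norm{H_0}_{\mu}^2$ from the initial conditions \eqref{eqn:ICs}, gives $\mathcal{E}^2(t) \le \mathcal{E}^2(0) + 2\int_0^t F(s)\,\mathcal{E}(s)\,ds$. Applying \Cref{lemma:oulang} with $u = \mathcal{E}$, $c = \mathcal{E}(0)$ and $f = F$ then yields $\mathcal{E}(t) \le \mathcal{E}(0) + \int_0^t F(s)\,ds$ for every $t \in [0, T]$.

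To conclude, I would use $\mathcal{E}(0) \le \norm{p_0}_{\varepsilon^{-1}} + \norm{E_0}_{\varepsilon} + \norm{H_0}_{\mu}$ (from $\sqrt{a^2+b^2+c^2} \le a+b+c$ for $a,b,c \ge 0$), bound $\int_0^t F(s)\,ds \le \int_0^T F(s)\,ds = \norm{f_p}_{L^1[0,T]\times L^2_{\varepsilon^{-1}}(\Omega)} + \norm{f_E}_{L^1[0,T]\times L^2_{\varepsilon^{-1}}(\Omega)} + \norm{f_H}_{L^1[0,T]\times L^2_{\mu^{-1}}(\Omega)}$, and finally invoke $\norm{p}_{\varepsilon^{-1}} + \norm{E}_{\varepsilon} + \norm{H}_{\mu} \le \sqrt{3}\,\mathcal{E}(t)$ (Cauchy--Schwarz in $\R^3$) --- which is exactly where the factor $\sqrt{3}$ in the statement comes from --- to obtain the claimed inequality. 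Uniqueness for the variational problem is then immediate, applying the estimate to the difference of two solutions with vanishing data, and existence follows from a standard Galerkin (or Hille--Yosida) argument once this a priori bound is available.

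The step I expect to demand the most care is the bookkeeping behind the cancellation: one must verify that the inner products appearing in \eqref{eqn:maxwell_p_wf}--\eqref{eqn:maxwell_H_wf} are weighted consistently with the solution spaces $\mathring{H}^1_{\varepsilon^{-1}}(\Omega) \times \mathring{H}_{\varepsilon}(\curl;\Omega) \times \mathring{H}_{\mu}(\divgn;\Omega)$, so that testing with $(p, E, H)$ annihilates all cross terms \emph{and} reproduces precisely the weighted norms $\norm{\cdot}_{\varepsilon^{-1}}$, $\norm{\cdot}_{\varepsilon}$, $\norm{\cdot}_{\mu}$ in the conclusion. A minor technical point is that $F$ is only $L^1$ in time rather than continuous, so \Cref{lemma:oulang} is used in its routine extension to $L^1$ forcing (equivalently, one may first absorb $\sup_{[0,t]}\mathcal{E}$ on the right).
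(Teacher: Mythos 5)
Your proposal is correct and follows essentially the same route as the paper: test with $(\varepsilon^{-1}p, E, H)$ so the cross terms cancel, reduce to the energy balance, apply Cauchy--Schwarz and integrate, close with the OuLang inequality of \Cref{lemma:oulang}, and finish with the equivalence of $1$- and $2$-norms to produce the $\sqrt{3}$. The only cosmetic difference is that you phrase the test function as $p$ in the $\varepsilon^{-1}$-weighted inner product rather than $\varepsilon^{-1}p$ in the unweighted one, which is the same computation given that $\varepsilon$ is treated as (piecewise) constant.
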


\begin{proof}
  Since Equations~\labelcref{eqn:maxwell_p_wf,eqn:maxwell_E_wf,eqn:maxwell_H_wf} are true for all $\widetilde{p}\in \mathring{H}^1_{\varepsilon^{-1}}(\Omega), \; \widetilde{E} \in \mathring{H}_{\varepsilon}(\curl; \Omega), \; \widetilde{H} \in \mathring{H}_{\mu}(\divgn; \Omega)$, we choose $\widetilde{p} = \varepsilon^{-1} p, \widetilde{E} = E$ and $\widetilde{H} = H$ which then yields the following set of equations:
  \begin{align*}
      \aInnerproduct{\dfrac{\partial p}{\partial t}}{\varepsilon^{-1} p} - \aInnerproduct{ \varepsilon E}{\nabla \varepsilon^{-1} p} &= \aInnerproduct{f_p}{\varepsilon^{-1} p}, \\
      \aInnerproduct{\nabla p}{E} + \aInnerproduct{\varepsilon \dfrac{\partial E}{\partial t}}{E} - \aInnerproduct{H}{\nabla \times E} &= \aInnerproduct{f_E}{E}, \\
      \aInnerproduct{\mu \dfrac{\partial H}{\partial t}}{H} + \aInnerproduct{\nabla \times E}{H} &= \aInnerproduct{f_H}{H}.
\end{align*}
Adding these together and using the properties of the inner product, we obtain:
\begin{equation}
  \aInnerproduct{\dfrac{\partial p}{\partial t}}{\varepsilon^{-1} p} + \aInnerproduct{\varepsilon \dfrac{\partial E}{\partial t}}{E} + \aInnerproduct{\mu \dfrac{\partial H}{\partial t}}{H} = \aInnerproduct{f_p}{\varepsilon^{-1} p} + \aInnerproduct{f_E}{E} + \aInnerproduct{f_H}{H}. \label{eqn:p+E+H_wf}
\end{equation}
Now consider that:
\[
  \dfrac{d}{dt} \norm{p}^2_{\varepsilon^{-1}} = \dfrac{d}{dt} \ainnerproduct{\varepsilon^{-1}p}{p} = 2\ainnerproduct{\dfrac{\partial p}{\partial t}}{\varepsilon^{-1} p}.
\]
Similarly, we also have:
\[
  \dfrac{d}{dt} \norm{E}^2_{\varepsilon} = 2\ainnerproduct{\varepsilon \dfrac{\partial E}{\partial t}}{E}, \text{ and } \dfrac{d}{dt} \norm{H}^2_{\mu} = 2\ainnerproduct{\mu \dfrac{\partial H}{\partial t}}{H}.  \]
Using these in Equation~\eqref{eqn:p+E+H_wf} and the Cauchy-Schwarz inequality, we get:
\begin{align*}
  \dfrac{1}{2} \dfrac{d}{dt} \big[ \norm{p}^2_{\varepsilon^{-1}} + \norm{E}^2_{\varepsilon} + \norm{H}^2_{\mu} \big] &= \aInnerproduct{f_p}{\varepsilon^{-1} p} + \aInnerproduct{\varepsilon^{-\frac{1}{2}} f_E}{\varepsilon^{\frac{1}{2}} E} + \aInnerproduct{\mu^{-\frac{1}{2}} f_H}{\mu^{\frac{1}{2}} H}, \\
& \le \norm{f_p}_{\varepsilon^{-1}} \norm{p}_{\varepsilon^{-1}} + \norm{f_E}_{\varepsilon^{-1}} \norm{E}_{\varepsilon} + \norm{f_H}_{\mu^{-1}} \norm{H}_{\mu}.
\end{align*}
Integrating both sides of this inequality with respect to $t$ from $0$ to $T$, we next get:
\begin{multline*}
  \norm{p}^2_{\varepsilon^{-1}} + \norm{E}^2_{\varepsilon} + \norm{H}^2_{\mu} \le \\ \norm{p_0}^2_{\varepsilon^{-1}} + \norm{E_0}^2_{\varepsilon} + \norm{H_0}^2_{\mu} + 2 \int\limits_0^T \left( \norm{f_p}_{\varepsilon^{-1}} + \norm{f_E}_{\varepsilon^{-1}} + \norm{f_H}_{\mu^{-1}} \right) \sqrt{\norm{p}^2_{\varepsilon^{-1}} + \norm{E}^2_{\varepsilon} + \norm{H}^2_{\mu}} ds,
\end{multline*}
Then using the Gronwall-OuLang inequality as described in Lemma~\labelcref{lemma:oulang}, this results in:
\[
  \sqrt{\norm{p}^2_{\varepsilon^{-1}} + \norm{E}^2_{\varepsilon} + \norm{H}^2_{\mu}} \le \sqrt{\norm{p_0}^2_{\varepsilon^{-1}} + \norm{E_0}^2_{\varepsilon} + \norm{H_0}^2_{\mu}} + \int\limits_0^T \left(\norm{f_p}_{\varepsilon^{-1}} + \norm{f_E}_{\varepsilon^{-1}} + \norm{f_H}_{\mu^{-1}} \right) ds.
\]
Finally, using the equivalence of $1$- and $2$-norms, we obtain our result:
\begin{multline*}
  \norm{p}_{\varepsilon^{-1}} + \norm{E}_{\varepsilon} + \norm{H}_{\mu} \le 
  \sqrt{3} \Big[ \norm{p_0}_{\varepsilon^{-1}} + \norm{E_0}_{\varepsilon} + \norm{H_0}_{\mu} + \\
  \norm{f_p}_{L^1[0, T] \times L^2_{\varepsilon^{-1}}(\Omega)} + \norm{f_E}_{L^1[0, T] \times L^2_{\varepsilon^{-1}}(\Omega)} + \norm{f_H}_{L^1[0, T] \times L^2_{\mu^{-1}}(\Omega)} \Big]. \qedhere
\end{multline*}
\end{proof}

\begin{corollary}[Energy Conservation] \label{corr:smth_enrgy_cnsrvtn}
  If the forcing functions in Equation~\eqref{eqn:maxwells_eqns} of the Maxwell's equations are all zero, that is, $f_p = 0$ and $f_E = f_H = 0$ and with initial conditions as in Equation~\eqref{eqn:ICs} then:
  \[
    \norm{p}^2_{\varepsilon^{-1}} + \norm{E}^2_{\varepsilon} + \norm{H}^2_{\mu} = \norm{p_0}^2_{\varepsilon^{-1}} + \norm{E_0}^2_{\varepsilon} + \norm{H_0}^2_{\mu}.
  \]
\end{corollary}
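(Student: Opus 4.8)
The plan is to reuse the computation already carried out in the proof of Theorem~\ref{thm:smth_enrgy_estmt}, but to stop before any inequality is invoked. Testing Equations~\labelcref{eqn:maxwell_p_wf,eqn:maxwell_E_wf,eqn:maxwell_H_wf} with $\widetilde{p} = \varepsilon^{-1} p$, $\widetilde{E} = E$, and $\widetilde{H} = H$, and using that $\varepsilon$ is (piecewise) constant so that $\nabla(\varepsilon^{-1}p) = \varepsilon^{-1}\nabla p$, the two mixed pairs cancel, namely $\aInnerproduct{\varepsilon E}{\nabla \varepsilon^{-1}p} = \aInnerproduct{\nabla p}{E}$ and $\aInnerproduct{H}{\nabla\times E} = \aInnerproduct{\nabla\times E}{H}$. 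Summing the three tested identities therefore yields exactly Equation~\eqref{eqn:p+E+H_wf}, and rewriting each time-derivative term as one half of the time derivative of the corresponding weighted norm squared gives the pointwise-in-time identity
\[
  \dfrac{1}{2}\dfrac{d}{dt}\big[ \norm{p}_{\varepsilon^{-1}}^2 + \norm{E}_{\varepsilon}^2 + \norm{H}_{\mu}^2 \big] = \aInnerproduct{f_p}{\varepsilon^{-1}p} + \aInnerproduct{f_E}{E} + \aInnerproduct{f_H}{H}.
\]

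Under the hypothesis $f_p = f_E = f_H = 0$, the right-hand side vanishes on all of $[0,T]$, so the scalar function $t \mapsto \norm{p}_{\varepsilon^{-1}}^2 + \norm{E}_{\varepsilon}^2 + \norm{H}_{\mu}^2$ has vanishing derivative; by the assumed $C^1$ regularity of $p, E, H$ in time this function is continuously differentiable, hence constant on $[0,T]$. Evaluating at $t = 0$ and using the initial conditions in Equation~\eqref{eqn:ICs} identifies that constant as $\norm{p_0}_{\varepsilon^{-1}}^2 + \norm{E_0}_{\varepsilon}^2 + \norm{H_0}_{\mu}^2$, which is the claimed equality. (Equivalently, one may integrate the displayed identity directly from $0$ to any $t \in [0,T]$.)

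The point worth emphasising is that the corollary is \emph{not} a direct consequence of the energy estimate of Theorem~\ref{thm:smth_enrgy_estmt}: that estimate is a one-sided bound obtained through Cauchy--Schwarz and the Gronwall--OuLang inequality, and so gives no lower bound. Equality must instead be read off the exact differential identity above, which is integrated rather than estimated. Consequently there is essentially no obstacle here, since all the substantive work is already contained in the derivation of Equation~\eqref{eqn:p+E+H_wf}; the only care needed is to avoid appealing to its inequality form. We remark that the analogous discrete computations — testing the Crank--Nicholson and implicit leapfrog schemes against the corresponding discrete states — will yield the discrete energy-conservation statements of Section~\ref{sec:time_disc_stab} in the same manner.
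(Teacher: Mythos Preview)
Your argument is correct and is precisely the approach the paper intends: the corollary is stated without proof immediately after Theorem~\ref{thm:smth_enrgy_estmt}, relying on the exact identity derived there (Equation~\eqref{eqn:p+E+H_wf} rewritten as $\tfrac{1}{2}\tfrac{d}{dt}$ of the energy) rather than on the subsequent inequality. Your observation that the conclusion comes from the differential identity and not from the one-sided estimate is exactly the right reading.
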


\begin{remark}[Uniqueness of Solution]
As a result of Theorem~\ref{thm:smth_enrgy_estmt}, and by a standard argument, for example \cite[Theorem 5, Section 2.4, Chapter 2]{Evans2010}, the solution to the variational formulation of the Maxwell's equations in Equations~\labelcref{eqn:maxwell_p_wf,eqn:maxwell_E_wf,eqn:maxwell_H_wf} with initial conditions as in Equation~\eqref{eqn:ICs} has a unique solution.
\end{remark}

\section{Time Discretization Stability} \label{sec:time_disc_stab}

For each of our time discretization schemes, we next show that these semidiscretizations in time yield stable methods and we provide error estimates which show that these schemes are quadratically convergent in each of their chosen sufficiently small but fixed time step.

\subsection{Crank-Nicholson Scheme}

\begin{theorem}[Discrete Energy Estimate] \label{thm:dscrt_enrgy_estmt_cn}
For the semidiscretization using the Crank-Nicholson scheme as given in Equations~\labelcref{eqn:maxwell_p_cn,eqn:maxwell_E_cn,eqn:maxwell_H_cn}, for any fixed time step $\Delta t > 0$ sufficiently small, there exists a positive bounded constant $C$ independent of $\Delta t$ such that:
\[
  \norm{p^N}_{\varepsilon^{-1}} + \norm{E^N}_{\varepsilon} + \norm{H^N}_{\mu} \le C.
\]
\end{theorem}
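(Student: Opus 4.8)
The plan is to run the argument of Theorem~\ref{thm:smth_enrgy_estmt} at the discrete level, the key point being that the Crank--Nicholson discretization retains the skew-symmetric coupling between $p$, $E$ and $H$. Fix $n\in\{1,\dots,N\}$ and, treating $\varepsilon$ and $\mu$ as constants (per the paper's standing convention, so that $\nabla(\varepsilon^{-1}u)=\varepsilon^{-1}\nabla u$), test Equations~\labelcref{eqn:maxwell_p_cn,eqn:maxwell_E_cn,eqn:maxwell_H_cn} with the midpoint functions $\widetilde p=\tfrac12\varepsilon^{-1}(p^n+p^{n-1})$, $\widetilde E=\tfrac12(E^n+E^{n-1})$ and $\widetilde H=\tfrac12(H^n+H^{n-1})$. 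With this choice the coupling term $-\aInnerproduct{\tfrac\varepsilon2(E^n+E^{n-1})}{\nabla\widetilde p}$ from the $p$-equation becomes $-\tfrac14\aInnerproduct{E^n+E^{n-1}}{\nabla(p^n+p^{n-1})}$ and cancels exactly against $\aInnerproduct{\tfrac12\nabla(p^n+p^{n-1})}{\widetilde E}$ from the $E$-equation; likewise $-\aInnerproduct{\tfrac12(H^n+H^{n-1})}{\nabla\times\widetilde E}$ cancels against $\aInnerproduct{\tfrac12\nabla\times(E^n+E^{n-1})}{\widetilde H}$ from the $H$-equation. The surviving left-hand side terms are evaluated by the identity $\aInnerproduct{u-v}{u+v}_\alpha=\norm{u}_\alpha^2-\norm{v}_\alpha^2$, which turns, e.g., $\aInnerproduct{(p^n-p^{n-1})/\Delta t}{\tfrac12\varepsilon^{-1}(p^n+p^{n-1})}$ into $\tfrac1{2\Delta t}(\norm{p^n}_{\varepsilon^{-1}}^2-\norm{p^{n-1}}_{\varepsilon^{-1}}^2)$, and analogously for $E$ in $\norm{\cdot}_\varepsilon$ and $H$ in $\norm{\cdot}_\mu$. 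This is the discrete analogue of the step $\tfrac12\tfrac{d}{dt}[\norm{p}_{\varepsilon^{-1}}^2+\norm{E}_\varepsilon^2+\norm{H}_\mu^2]=\cdots$ in the continuous proof.

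Writing $\mathcal E^n:=\norm{p^n}_{\varepsilon^{-1}}^2+\norm{E^n}_\varepsilon^2+\norm{H^n}_\mu^2$ and $\mathcal F^n:=\norm{f_p^n}_{\varepsilon^{-1}}+\norm{f_E^n}_{\varepsilon^{-1}}+\norm{f_H^n}_{\mu^{-1}}$, bounding the three forcing inner products on the right by Cauchy--Schwarz with the matching weights (e.g. $\aInnerproduct{f_E}{E}\le\norm{f_E}_{\varepsilon^{-1}}\norm{E}_\varepsilon$) and using $\norm{p^n}_{\varepsilon^{-1}},\norm{E^n}_\varepsilon,\norm{H^n}_\mu\le\sqrt{\mathcal E^n}$ yields, after multiplying through by $2\Delta t$,
\[
  \mathcal E^n-\mathcal E^{n-1}\;\le\;\tfrac{\Delta t}{2}\,(\mathcal F^n+\mathcal F^{n-1})\bigl(\sqrt{\mathcal E^n}+\sqrt{\mathcal E^{n-1}}\bigr).
\]
Telescoping this from $1$ to $m\le N$ gives $\mathcal E^m\le\mathcal E^0+\tfrac{\Delta t}{2}\sum_{n=1}^m(\mathcal F^n+\mathcal F^{n-1})(\sqrt{\mathcal E^n}+\sqrt{\mathcal E^{n-1}})$.

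To close, set $X_N:=\max_{0\le m\le N}\sqrt{\mathcal E^m}$ and $\Phi:=\Delta t\sum_{n=0}^N\mathcal F^n$. Using $\sqrt{\mathcal E^n}+\sqrt{\mathcal E^{n-1}}\le 2X_N$ and $\sum_{n=1}^m(\mathcal F^n+\mathcal F^{n-1})\le 2\sum_{n=0}^N\mathcal F^n$ in the previous display gives $\mathcal E^m\le\mathcal E^0+2\Phi X_N$ for every $m\le N$; taking the maximum over $m$ yields the quadratic inequality $X_N^2\le\mathcal E^0+2\Phi X_N$, hence $X_N\le\Phi+\sqrt{\Phi^2+\mathcal E^0}\le 2\Phi+\sqrt{\mathcal E^0}$. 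This is exactly the discrete counterpart of the Gronwall--OuLang step (Lemma~\ref{lemma:oulang}) used in Theorem~\ref{thm:smth_enrgy_estmt}; alternatively one can rearrange and invoke the discrete Gronwall inequality, Lemma~\ref{lemma:gronwall_dscrt}. Finally, by the equivalence of the $1$- and $2$-norms on $\R^3$, $\norm{p^N}_{\varepsilon^{-1}}+\norm{E^N}_\varepsilon+\norm{H^N}_\mu\le\sqrt3\,\sqrt{\mathcal E^N}\le\sqrt3\,X_N\le\sqrt3\,(2\Phi+\sqrt{\mathcal E^0})=:C$, with $\mathcal E^0=\norm{p_0}_{\varepsilon^{-1}}^2+\norm{E_0}_\varepsilon^2+\norm{H_0}_\mu^2$ fixed by the data.

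The only parts that are pure bookkeeping are the cancellations and the Cauchy--Schwarz estimates; the genuinely load-bearing choice is the set of midpoint test functions, which is what makes the discrete coupling terms telescope exactly and is, in effect, the source of the scheme's discrete energy conservation when $f_p=f_E=f_H=0$. The one analytical point needing care is that $\Phi=\Delta t\sum_{n=0}^N\mathcal F^n$ must stay bounded independently of $\Delta t$: this is a Riemann-type sum for $\int_0^T(\norm{f_p}_{\varepsilon^{-1}}+\norm{f_E}_{\varepsilon^{-1}}+\norm{f_H}_{\mu^{-1}})\,ds$ and is controlled by a constant multiple of $\norm{f_p}_{L^1[0,T]\times L^2_{\varepsilon^{-1}}(\Omega)}+\norm{f_E}_{L^1[0,T]\times L^2_{\varepsilon^{-1}}(\Omega)}+\norm{f_H}_{L^1[0,T]\times L^2_{\mu^{-1}}(\Omega)}$ provided the forcing is, say, continuous in time, which is where the ``$\Delta t$ sufficiently small'' hypothesis enters; no CFL-type restriction is needed since Crank--Nicholson is unconditionally stable. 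Existence and uniqueness of $(p^n,E^n,H^n)$ at each step is separate, following from the fact that the step map is a coercive linear problem; the a priori bound above is what the theorem records.
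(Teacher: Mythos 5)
Your proof is correct, and the first half (midpoint-type test functions, exact cancellation of the $\nabla$ and $\nabla\times$ coupling terms, the identity $\aInnerproduct{u-v}{u+v}_\alpha=\norm{u}_\alpha^2-\norm{v}_\alpha^2$, and telescoping) coincides with the paper's argument up to the harmless scalar factor in the test functions ($\tfrac12$ versus $2\Delta t$). Where you genuinely diverge is in how the recursion is closed. The paper bounds each forcing inner product by Cauchy--Schwarz followed by AM--GM, so the right-hand side becomes a sum of \emph{squared} norms, and then invokes the discrete Gronwall inequality (Lemma~\ref{lemma:gronwall_dscrt}) with $\gamma_n=1$; this is what forces the restriction $\Delta t<1/2$, produces the factor $\exp(2T+1)$, and requires the forcing to be controlled in an $L^2$-in-time sense. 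You instead keep the forcing terms linear in $\sqrt{\mathcal E^n}$, telescope, and close with a discrete OuLang-style argument: take $X_N=\max_m\sqrt{\mathcal E^m}$, obtain the quadratic inequality $X_N^2\le\mathcal E^0+2\Phi X_N$, and solve it. This buys a cleaner constant (no exponential in $T$), needs only an $L^1$-in-time control of the forcing exactly as in the continuous Theorem~\ref{thm:smth_enrgy_estmt}, and in fact requires no smallness of $\Delta t$ at all --- so your parenthetical attributing the ``$\Delta t$ sufficiently small'' hypothesis to the Riemann-sum bound is a slight misattribution (in the paper that hypothesis comes from the Gronwall condition $\gamma_n\delta<1$, and in your argument it is simply not needed; boundedness of the Riemann sums only needs the forcing to be, say, continuous or Riemann integrable in time, a mild hypothesis the paper also uses implicitly). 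Both approaches rely on $\varepsilon$ being treated as constant so that $\nabla(\varepsilon^{-1}\widetilde p)=\varepsilon^{-1}\nabla\widetilde p$, which you correctly flag as the paper's standing convention.
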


\begin{proof}
Since Equations~\labelcref{eqn:maxwell_p_cn,eqn:maxwell_E_cn,eqn:maxwell_H_cn} are true for all $\widetilde{p}\in \mathring{H}^1_{\varepsilon^{-1}}(\Omega)$, $\widetilde{E} \in \mathring{H}_{\varepsilon}(\curl; \Omega)$, $\widetilde{H} \in \mathring{H}_{\mu}(\divgn; \Omega)$, using $\widetilde{p} = 2 \Delta t \varepsilon^{-1} \left(p^n + p^{n-1}\right), \widetilde{E} = 2 \Delta t \left(E^n + E^{n-1}\right) $ and $\widetilde{H} = 2 \Delta t \left(H^n + H^{n-1}\right)$ in them, we obtain the following:
\begin{multline*}
  2 \aInnerproduct{p^n - p^{n-1}}{\varepsilon^{-1} \left( p^n + p^{n-1} \right)} -
  \Delta t \aInnerproduct{E^n + E^{n-1}}{\nabla \left(p^n + p^{n-1}\right)} =
  \Delta t \aInnerproduct{f_p^n + f_p^{n-1}}{\varepsilon^{-1} \left(p^n + p^{n-1}\right)},
\end{multline*}
\vspace{-1em} \begin{multline*}
  \Delta t \aInnerproduct{\nabla \left(p^n + p^{n-1} \right)}{E^n + E^{n-1}} +  2 \aInnerproduct{\varepsilon \left(E^n - E^{n-1}\right)}{E^n + E^{n-1}} -
  \Delta t \aInnerproduct{H^n + H^{n-1}}{\nabla \times \left(E^n + E^{n-1}\right)} \\ = \Delta t \aInnerproduct{f_E^n + f_E^{n-1}}{E^n + E^{n-1}},
\end{multline*}
\vspace{-1em} \begin{multline*}
  2 \aInnerproduct{\mu \left(H^n - H^{n-1} \right)}{H^n + H^{n-1}} + \Delta t \aInnerproduct{\nabla \times \left(E^n + E^{n-1} \right)}{H^n + H^{n-1}} = \Delta t \aInnerproduct{ f_H^n + f_H^{n-1}}{H^n + H^{n-1}}. 
\end{multline*}
Adding these equations together and using the properties of inner product, we get:
\begin{multline}
  2 \left( \aInnerproduct{\varepsilon^{-1} \left(p^n - p^{n - 1} \right)}{p^n + p^{n - 1}} + \right. \aInnerproduct{\varepsilon \left(E^n - E^{n - 1} \right)}{E^n + E^{n - 1}} + 
  \left. \aInnerproduct{\mu \left( H^n - H^{n - 1} \right)}{H^n + H^{n - 1}} \right) \\
  = \Delta t \left( \aInnerproduct{f_p^n + f_p^{n - 1}}{\varepsilon^{-1} \left( p^n + p^{n - 1} \right)} + \aInnerproduct{f_E^n + f_E^{n - 1}}{E^n + E^{n - 1}} + \aInnerproduct{ f_H^n + f_H^{n - 1}}{H^n + H^{n - 1}} \right). \label{eqn:p+E+H_cn}
\end{multline}
Using the Cauchy-Schwarz, AM-GM, and Triangle inequalities for the first term on the right hand side of the above expression, we derive the following inequality:
\begin{align*}
  \aInnerproduct{f_p^n + f_p^{n - 1}}{\varepsilon^{-1} \left(p^n + p^{n - 1}\right)} &\le \norm{f_p^n + f_p^{n - 1}}_{\varepsilon^{-1}} \norm{p^n + p^{n - 1}}_{\varepsilon^{-1}}, \\
&\le \dfrac{1}{2} \left( \norm{f_p^n + f_p^{n - 1}}^2_{\varepsilon^{-1}} + \norm{p^n + p^{n - 1}}^2_{\varepsilon^{-1}} \right), \\
&\le \norm{f_p^n}^2_{\varepsilon^{-1}} + \norm{f_p^{n - 1}}^2_{\varepsilon^{-1}} + \norm{p^n}^2_{\varepsilon^{-1}} + \norm{p^{n - 1}}^2_{\varepsilon^{-1}}.
\end{align*}
Similarly in Equation~\eqref{eqn:p+E+H_cn}, for the second and third terms on the right hand side, we have:
\begin{align*}
\aInnerproduct{f_E^n + f_E^{n - 1}}{E^n + E^{n - 1}} & \le \norm{f_E^n}^2_{\varepsilon^{-1}} + \norm{f_E^{n - 1}}^2_{\varepsilon^{-1}} + \norm{E^n}^2_{\varepsilon} + \norm{E^{n - 1}}^2_{\varepsilon}, \\
\aInnerproduct{f_H^n + f_H^{n - 1}}{H^n + H^{n - 1}} &\le \norm{f_H^n}^2_{\mu^{-1}} + \norm{f_H^{n - 1}}^2_{\mu^{-1}} + \norm{H^n}^2_{\mu} + \norm{H^{n - 1}}^2_{\mu}.
\end{align*}
Incorporating these inequalities into Equation~\eqref{eqn:p+E+H_cn}, we obtain:
\begin{multline*}
  2 \Big[ \norm{p^n}^2_{\varepsilon^{-1}} - \norm{p^{n - 1}}^2_{\varepsilon^{-1}} + \norm{E^n}^2_{\varepsilon} - \norm{E^{n - 1}}^2_{\varepsilon} + \norm{H^n}^2_{\mu} - \norm{H^{n - 1}}^2_{\mu} \Big] \\
  \le \Delta t \Big[ \norm{f_p^n}^2_{\varepsilon^{-1}} + \norm{f_p^{n - 1}}^2_{\varepsilon^{-1}} + \norm{p^n}^2_{\varepsilon^{-1}} + \norm{p^{n - 1}}^2_{\varepsilon^{-1}} + \norm{f_E^n}^2_{\varepsilon^{-1}} + \norm{f_E^{n - 1}}^2_{\varepsilon^{-1}} + \\
  \norm{E^n}^2_{\varepsilon} + \norm{E^{n - 1}}^2_{\varepsilon} +
\norm{f_H^n}^2_{\mu^{-1}} + \norm{f_H^{n - 1}}^2_{\mu^{-1}} + \norm{H^n}^2_{\mu} + \norm{H^{n - 1}}^2_{\mu} \Big].
\end{multline*}
Next, summing over $n = 1$ to $N$ and using the initial conditions as in Equation~\eqref{eqn:ICs}, we have that:
\begin{multline*}
  2 \Big[ \norm{p^N}^2_{\varepsilon^{-1}} + \norm{E^N}^2_{\varepsilon} + \norm{H^N}^2_{\mu} \Big] \le \Delta t \Big[ \sum\limits_{n = 0}^{N} 2 \left( \norm{p^n}^2_{\varepsilon^{-1}} + \norm{E^n}^2_{\varepsilon} + \norm{H^n}^2_{\mu} \right. + \\
  \left. \norm{f_p^n}^2_{\varepsilon^{-1}} + \norm{f_E^n}^2_{\varepsilon^{-1}} + \norm{f_H^n}^2_{\mu^{-1}} \right) \Big] + 2 \Big[ \norm{p_0}^2_{\varepsilon^{-1}} + \norm{E_0}^2_{\varepsilon} + \norm{H_0}^2_{\mu} \Big],
\end{multline*}
which in turn provides us with the following estimate:
\begin{multline*}
  \norm{p^N}^2_{\varepsilon^{-1}} + \norm{E^N}^2_{\varepsilon} + \norm{H^N}^2_{\mu} \le \\
  \Delta t \sum\limits_{n = 0}^{N} \Big[ \norm{p^n}^2_{\varepsilon^{-1}} +\norm{E^n}^2_{\varepsilon} + \norm{H^n}^2_{\mu} \Big] + \Delta t \sum\limits_{n = 0}^{N} \Big[ \norm{f_p^n}^2_{\varepsilon^{-1}} + \norm{f_E^n}^2_{\varepsilon^{-1}} + \norm{f_H^n}^2_{\mu^{-1}}\Big] + \Big[ \norm{p_0}^2_{\varepsilon^{-1}} + \norm{E_0}^2_{\varepsilon} + \norm{H_0}^2_{\mu} \Big].
\end{multline*}
Now, we are ready to apply the discrete Gronwall inequality as in Lemma~\ref{lemma:gronwall_dscrt}. To do so, we set $\displaystyle \delta \coloneq \Delta t$, $\displaystyle g_0 \coloneq \norm{p_0}^2_{\varepsilon^{-1}} + \norm{E_0}^2_{\varepsilon} + \norm{H_0}^2_{\mu}$, $\displaystyle a_n \coloneq \norm{p^n}^2_{\varepsilon^{-1}} + \norm{E^n}^2_{\varepsilon} + \norm{H^n}^2_{\mu}$, $\displaystyle b_n \coloneq 0$, $c_n \coloneq \norm{f_p^n}^2_{\varepsilon^{-1}} + \norm{f_E^n}^2_{\varepsilon^{-1}} + \norm{f_H^n}^2_{\mu^{-1}}$, and $\displaystyle \gamma_n \coloneq 1$. Note that for the condition $\gamma_n \delta < 1$ to hold, we need to have $\Delta t < 1/2$ and therefore we have that $\sigma_n = \left(1 - \Delta t \right)^{-1}$. Then, we get:
\begin{multline*}
  \norm{p^N}^2_{\varepsilon^{-1}} + \norm{E^N}^2_{\varepsilon} + \norm{H^N}^2_{\mu} \le \\
  \shoveright{\Bigg[ \Delta t \sum\limits_{n = 0}^{N} \left( \norm{f_p^n}^2_{\varepsilon^{-1}} + \norm{f_E^n}^2_{\varepsilon^{-1}} + \norm{f_H^n}^2_{\mu^{-1}} \right) + \norm{p_0}^2_{\varepsilon^{-1}} + \norm{E_0}^2_{\varepsilon} + \norm{H_0}^2_{\mu}\Bigg] \exp \left[ \Delta t \sum\limits_{n = 0}^{N} \left( 1 - \Delta t \right)^{-1} \right],} \\
  \le \Big[ \Delta t \sum\limits_{n = 0}^{N} \left( \norm{f_p^n}^2_{\varepsilon^{-1}} + \norm{f_E^n}^2_{\varepsilon^{-1}} + \norm{f_H^n}^2_{\mu^{-1}}\right) + \norm{p_0}^2_{\varepsilon^{-1}} + \norm{E_0}^2_{\varepsilon} + \norm{H_0}^2_{\mu}\Big] \exp\left( 2 T + 1 \right),
\end{multline*}
where the second inequality is obtained using that $\displaystyle (N + 1) \Delta t = T + \Delta t < T + 1/2$. We now estimate the term:
\[
  \sum \limits_{n = 0}^{N} \Delta t \left( \norm{f_p^n}^2_{\varepsilon^{-1}} + \norm{f_E^n}^2_{\varepsilon^{-1}} + \norm{f_H^n}^2_{\mu^{-1}} \right),
\]
which is a lower sum approximation of the integral:
\[
  \int\limits_0^T \left(\norm{f_p}^2_{\varepsilon^{-1}} + \norm{f_E}^2_{\varepsilon^{-1}} + \norm{f_H}^2_{\mu^{-1}}\right) dt,
\]
and which in turn is equal to:
\[
  \norm{f_p}^2_{L^2[0, T] \times L^2_{\varepsilon^{-1}}(\Omega)} + \norm{f_E}^2_{L^2[0, T] \times L^2_{\varepsilon^{-1}}(\Omega)} + \norm{f_H}^2_{L^2[0, T] \times L^2_{\mu^{-1}}(\Omega)},
\]
and hence is bounded. Set this bound to be a positive constant $M$. By using the equivalence of $1$- and $2$-norms, and $C = \sqrt{ 3 \big( M + \norm{p_0}^2_{\varepsilon^{-1}} + \norm{E_0}^2_{\varepsilon} + \norm{H_0}^2_{\mu} \big) \exp(2 T + 1)}$, we obtain our desired result:
\[
  \norm{p^N}_{\varepsilon^{-1}} + \norm{E^N}_{\varepsilon} + \norm{H^N}_{\mu} \le C. \qedhere
\]
\end{proof}

\begin{corollary}[Discrete Energy Conservation]\label{corr:dscrt_enrgy_cnsrvtn_cn}
If the forcing functions in Equation~\eqref{eqn:maxwells_eqns} are all zero, that is, $f_p = 0$ and $f_E = f_H = 0$ then:
\[
  \norm{p^N}^2_{\varepsilon^{-1}} + \norm{E^N}^2_{\varepsilon} + \norm{H^N}^2_{\mu} = \norm{p_0}^2_{\varepsilon^{-1}} + \norm{E_0}^2_{\varepsilon} + \norm{H_0}^2_{\mu}.
\]
\end{corollary}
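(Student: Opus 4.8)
The plan is to return to Equation~\eqref{eqn:p+E+H_cn} in the proof of Theorem~\ref{thm:dscrt_enrgy_estmt_cn}, which was obtained by testing the Crank--Nicholson equations~\labelcref{eqn:maxwell_p_cn,eqn:maxwell_E_cn,eqn:maxwell_H_cn} with $\widetilde{p} = 2\Delta t\, \varepsilon^{-1}(p^n + p^{n-1})$, $\widetilde{E} = 2\Delta t\,(E^n + E^{n-1})$ and $\widetilde{H} = 2\Delta t\,(H^n + H^{n-1})$ and adding. The point to exploit is the exact cancellation already visible in that identity: the $\nabla$-coupling term produced by Equation~\eqref{eqn:maxwell_p_cn} cancels the one produced by Equation~\eqref{eqn:maxwell_E_cn}, and the $\nabla\times$-coupling term from Equation~\eqref{eqn:maxwell_E_cn} cancels the one from Equation~\eqref{eqn:maxwell_H_cn}, which is precisely why only the discrete time-derivative inner products survive on the left and only the forcing inner products on the right.

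Setting $f_p = f_E = f_H = 0$ annihilates the entire right-hand side of Equation~\eqref{eqn:p+E+H_cn}, leaving
\[
  \aInnerproduct{\varepsilon^{-1}(p^n - p^{n-1})}{p^n + p^{n-1}} + \aInnerproduct{\varepsilon(E^n - E^{n-1})}{E^n + E^{n-1}} + \aInnerproduct{\mu(H^n - H^{n-1})}{H^n + H^{n-1}} = 0.
\]
I would then apply to each term the difference-of-squares identity $\aInnerproduct{u - v}{u + v}_{\alpha} = \norm{u}_{\alpha}^2 - \norm{v}_{\alpha}^2$, which is immediate from the bilinearity and symmetry of the weighted inner product (equivalently, it is what the listed polarization identity yields after the $p^n-p^{n-1}$ factor is split as $\aInnerproduct{\cdot}{p^n} + \aInnerproduct{\cdot}{p^{n-1}}$). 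This turns the display into
\[
  \norm{p^n}_{\varepsilon^{-1}}^2 + \norm{E^n}_\varepsilon^2 + \norm{H^n}_\mu^2 = \norm{p^{n-1}}_{\varepsilon^{-1}}^2 + \norm{E^{n-1}}_\varepsilon^2 + \norm{H^{n-1}}_\mu^2,
\]
so the discrete energy is independent of $n$.

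Iterating (equivalently, telescoping) this equality down to $n = 0$ and invoking the initial conditions in Equation~\eqref{eqn:ICs} to identify the level-$0$ energy with $\norm{p_0}_{\varepsilon^{-1}}^2 + \norm{E_0}_\varepsilon^2 + \norm{H_0}_\mu^2$ gives the claim. There is no genuine obstacle here; the only point requiring care is that, unlike in Theorem~\ref{thm:dscrt_enrgy_estmt_cn}, every step must remain an exact identity, so no Cauchy--Schwarz, AM--GM, or Gronwall step may be introduced — the structure-preserving nature of the Crank--Nicholson discretization (midpoint averaging paired with the symmetric difference quotient evaluated at matching time levels) is exactly what makes the unforced identity collapse to conservation rather than a mere estimate.
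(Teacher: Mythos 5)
Your argument is correct and is exactly the route the paper intends: the corollary follows from the identity in Equation~\eqref{eqn:p+E+H_cn} (derived with the same test-function choices) by setting the forcing terms to zero, applying the difference-of-squares identity $\aInnerproduct{u-v}{u+v}_{\alpha} = \norm{u}_{\alpha}^2 - \norm{v}_{\alpha}^2$ termwise, and telescoping to the initial data, with no inequalities introduced. Nothing is missing.
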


\begin{theorem}[Discrete Error Estimate]\label{thm:dscrt_error_estmt_cn}
For the semidiscretization using the Crank-Nicholson scheme as given in Equations~\labelcref{eqn:maxwell_p_cn,eqn:maxwell_E_cn,eqn:maxwell_H_cn}, for the solution $(p, E, H)$ of Equations~\labelcref{eqn:maxwell_p_wf,eqn:maxwell_E_wf,eqn:maxwell_H_wf} with initial conditions as in Equation~\eqref{eqn:ICs} and assuming sufficient regularity with $p \in C^3[0, T] \times \mathring{H}^1_{\varepsilon^{-1}}(\Omega), \; E \in C^3[0, T] \times \mathring{H}_{\varepsilon}(\curl; \Omega)$, and $H \in C^3[0, T] \times \mathring{H}_{\mu}(\divgn; \Omega)$, and for the time step $\Delta t$ sufficiently small, there exists a positive bounded constant $C$ independent of $\Delta t$ such that:
\begin{equation}
\norm{e_p^N}_{\varepsilon^{-1}} + \norm{e_E^N}_{\varepsilon} + \norm{e_H^N}_{\mu} \le C \left[(\Delta t)^2 + \norm{e_p^0}_{\varepsilon^{-1}} + \norm{e_E^0}_{\varepsilon} + \norm{e_H^0}_{\mu} \right],
\label{eqn:errorestimate_cn}
\end{equation}
where $e_p^n \coloneq p(t^n) - p^n, \; e_E^n \coloneq E(t^n) - E^n$ and $e_H^n \coloneq H(t^n) - H^n$ are the errors in the time semidiscretization of $p$, $E$ and $H$, respectively and at the indicated time indices.
\end{theorem}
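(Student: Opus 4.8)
The plan is to rerun the energy argument in the proof of Theorem~\ref{thm:dscrt_enrgy_estmt_cn} with the error in place of the solution, the one new ingredient being a second-order consistency (truncation) error that enters only through the discrete time derivatives. First I would write the variational identities~\labelcref{eqn:maxwell_p_wf,eqn:maxwell_E_wf,eqn:maxwell_H_wf} satisfied by the exact solution at the two consecutive times $t^n$ and $t^{n-1}$, average the pair, and subtract the Crank--Nicholson equations~\labelcref{eqn:maxwell_p_cn,eqn:maxwell_E_cn,eqn:maxwell_H_cn}. The point of averaging over the endpoints (rather than collocating at $t^{n-\frac12}$) is that the averaged exact identity has source terms $\tfrac12(f_\bullet^n + f_\bullet^{n-1})$ and spatial-coupling terms $\tfrac\varepsilon2(E(t^n)+E(t^{n-1}))$ against $\nabla\widetilde p$, $\tfrac12\nabla(p(t^n)+p(t^{n-1}))$ against $\widetilde E$, and $\tfrac12\nabla\times(E(t^n)+E(t^{n-1}))$ against $\widetilde H$ and $\widetilde E$, which are structurally identical to the corresponding terms of the scheme; upon subtraction they produce exactly $\tfrac\varepsilon2(e_E^n+e_E^{n-1})$, $\tfrac12\nabla(e_p^n+e_p^{n-1})$, $\tfrac12\nabla\times(e_E^n+e_E^{n-1})$, with no leftover defect. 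The only defect left is in the time-derivative terms; defining
\[
  \tau_p^n = \frac{1}{2}\left(\frac{\partial p}{\partial t}(t^n) + \frac{\partial p}{\partial t}(t^{n-1})\right) - \frac{p(t^n) - p(t^{n-1})}{\Delta t}
\]
and $\tau_E^n$, $\tau_H^n$ analogously, standard Taylor estimates (with integral remainder) show that $\norm{\tau_p^n}_{\varepsilon^{-1}}$, $\norm{\tau_E^n}_{\varepsilon}$ and $\norm{\tau_H^n}_{\mu}$ are all $O((\Delta t)^2)$ uniformly in $n$, the $C^3$-in-time hypothesis being exactly what makes $\sup_{[0,T]}\big(\norm{\partial_t^3 p}_{\varepsilon^{-1}} + \norm{\partial_t^3 E}_{\varepsilon} + \norm{\partial_t^3 H}_{\mu}\big)$ finite; crucially each $\tau_\bullet^n$ is paired with the test function itself, not with $\nabla\widetilde p$ or $\nabla\times\widetilde E$. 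The error equations I would then work with hold for all $\widetilde p \in \mathring H^1_{\varepsilon^{-1}}(\Omega)$, $\widetilde E \in \mathring H_\varepsilon(\curl;\Omega)$, $\widetilde H \in \mathring H_\mu(\divgn;\Omega)$:
\begin{align*}
  \aInnerproduct{\dfrac{e_p^n - e_p^{n-1}}{\Delta t}}{\widetilde p} - \aInnerproduct{\dfrac{\varepsilon}{2}\left(e_E^n + e_E^{n-1}\right)}{\nabla \widetilde p} &= -\aInnerproduct{\tau_p^n}{\widetilde p}, \\
  \aInnerproduct{\dfrac{1}{2}\nabla\left(e_p^n + e_p^{n-1}\right)}{\widetilde E} + \aInnerproduct{\varepsilon\,\dfrac{e_E^n - e_E^{n-1}}{\Delta t}}{\widetilde E} - \aInnerproduct{\dfrac{1}{2}\left(e_H^n + e_H^{n-1}\right)}{\nabla \times \widetilde E} &= -\aInnerproduct{\varepsilon\,\tau_E^n}{\widetilde E}, \\
  \aInnerproduct{\mu\,\dfrac{e_H^n - e_H^{n-1}}{\Delta t}}{\widetilde H} + \aInnerproduct{\dfrac{1}{2}\nabla \times\left(e_E^n + e_E^{n-1}\right)}{\widetilde H} &= -\aInnerproduct{\mu\,\tau_H^n}{\widetilde H}.
\end{align*}

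Next I would test exactly as in the proof of Theorem~\ref{thm:dscrt_enrgy_estmt_cn}, taking $\widetilde p = 2\Delta t\,\varepsilon^{-1}(e_p^n + e_p^{n-1})$, $\widetilde E = 2\Delta t\,(e_E^n + e_E^{n-1})$ and $\widetilde H = 2\Delta t\,(e_H^n + e_H^{n-1})$, and add the three resulting identities. The two $\nabla$-coupling terms cancel against each other, and likewise the two $\nabla\times$-coupling terms, precisely as in that proof, while the polarization identity collapses the discrete time-derivative terms to $2\big(\norm{e_p^n}_{\varepsilon^{-1}}^2 - \norm{e_p^{n-1}}_{\varepsilon^{-1}}^2\big)$ and the analogous telescoping differences for $E$ and $H$. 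For the right-hand side, Cauchy--Schwarz together with the polarization/AM--GM inequalities give bounds of the form
\[
  2\Delta t\,\aInnerproduct{\tau_p^n}{\varepsilon^{-1}\left(e_p^n + e_p^{n-1}\right)} \le \Delta t\,\norm{\tau_p^n}_{\varepsilon^{-1}}^2 + 2\Delta t\left(\norm{e_p^n}_{\varepsilon^{-1}}^2 + \norm{e_p^{n-1}}_{\varepsilon^{-1}}^2\right),
\]
and similarly for the $E$- and $H$-contributions. Collecting everything yields a one-step inequality of exactly the shape met in the proof of Theorem~\ref{thm:dscrt_enrgy_estmt_cn}, with the squared forcing norms there replaced by $\norm{\tau_p^n}_{\varepsilon^{-1}}^2 + \norm{\tau_E^n}_{\varepsilon}^2 + \norm{\tau_H^n}_{\mu}^2$.

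Summing over $n = 1, \dots, N$ telescopes the left side to $\norm{e_p^N}_{\varepsilon^{-1}}^2 + \norm{e_E^N}_{\varepsilon}^2 + \norm{e_H^N}_{\mu}^2$ minus the same quantity at $n = 0$, and, since $N\Delta t = T$ and the consistency terms are uniformly $O((\Delta t)^2)$, the truncation contribution satisfies $\Delta t\sum_{n=1}^N\big(\norm{\tau_p^n}_{\varepsilon^{-1}}^2 + \norm{\tau_E^n}_{\varepsilon}^2 + \norm{\tau_H^n}_{\mu}^2\big) \le C(\Delta t)^4$. Applying the discrete Gronwall inequality (Lemma~\ref{lemma:gronwall_dscrt}) with $\delta = \Delta t$, $a_n = \norm{e_p^n}_{\varepsilon^{-1}}^2 + \norm{e_E^n}_{\varepsilon}^2 + \norm{e_H^n}_{\mu}^2$, $b_n = c_n = 0$, a fixed constant $\gamma_n$, and $g_0 = C(\Delta t)^4 + \norm{e_p^0}_{\varepsilon^{-1}}^2 + \norm{e_E^0}_{\varepsilon}^2 + \norm{e_H^0}_{\mu}^2$ --- valid once $\Delta t$ is small enough that $\gamma_n\Delta t < 1$ --- gives $\norm{e_p^N}_{\varepsilon^{-1}}^2 + \norm{e_E^N}_{\varepsilon}^2 + \norm{e_H^N}_{\mu}^2 \le g_0\,\exp(CT)$ with an exponential factor depending only on $T$, exactly as in Theorem~\ref{thm:dscrt_enrgy_estmt_cn}. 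Taking square roots, using $\sqrt{a + b} \le \sqrt a + \sqrt b$ for $a, b \ge 0$ and the equivalence of the $1$- and $2$-norms, then gives~\eqref{eqn:errorestimate_cn} with a constant $C$ that depends on $T$, $\varepsilon$, $\mu$ and $\sup_{[0,T]}\big(\norm{\partial_t^3 p}_{\varepsilon^{-1}} + \norm{\partial_t^3 E}_{\varepsilon} + \norm{\partial_t^3 H}_{\mu}\big)$ but is independent of $\Delta t$.

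The main obstacle --- and the only genuine departure from the energy estimate --- is the consistency step: the error identities must be arranged so that the truncation term is simultaneously (i) of order $(\Delta t)^2$ in the relevant weighted $L^2$ norms, which is exactly what forces the $C^3$-in-time hypothesis, and (ii) tested against $\widetilde p, \widetilde E, \widetilde H$ themselves and not against their differential images, since the error carries no control in the graph norms of $\mathring H^1_{\varepsilon^{-1}}$, $\mathring H_\varepsilon(\curl)$ or $\mathring H_\mu(\divgn)$. Averaging the exact equations over the two endpoints $t^{n-1}, t^n$ is precisely the device that secures (ii): it makes the source and spatial-coupling terms of the exact identity coincide term by term with those of the scheme, leaving only the time-difference terms defective. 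Had one instead collocated the exact equations at $t^{n-\frac12}$, the resulting quadrature mismatch in $\aInnerproduct{\varepsilon E}{\nabla\widetilde p}$ would have to be moved onto $\widetilde p$ by an integration by parts, which would demand extra regularity of $\nabla\cdot(\varepsilon E)$ in time. A minor secondary point is keeping the Gronwall constant uniform in $\Delta t$, which only needs $\Delta t$ bounded away from $\gamma_n^{-1}$.
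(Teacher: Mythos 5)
Your proof is correct, and it takes a genuinely different (and leaner) route to the consistency analysis than the paper. The paper collocates the exact variational identities at the midpoint $t^{n-\frac12}$, which produces two families of Taylor remainders: the $R^n_\bullet$ terms from replacing the time derivative by the difference quotient, and the $r^n_\bullet$ terms from replacing midpoint values by endpoint averages; the latter enter the error equations attached to the spatial couplings, so the paper's right-hand sides carry $\nabla r_p^n$, $\nabla\times r_E^n$, $r_E^n$, $r_H^n$, and its final constant involves norms of $\partial_t^2(\nabla p)$ and $\partial_t^2(\nabla\times E)$ in addition to third time derivatives. You instead average the exact identities at $t^{n-1}$ and $t^n$, so the forcing and spatial-coupling terms match the scheme term by term and the only defect is the trapezoidal-rule error $\tau^n_\bullet$ in the time-derivative terms, tested against $\widetilde p,\widetilde E,\widetilde H$ themselves and bounded by third time derivatives alone; this is precisely your point (ii), and it is the spot where the midpoint route is in fact slightly delicate, since the remainders $\varepsilon r_E^n$ and $r_H^n$ genuinely arise paired with $\nabla\widetilde p$ and $\nabla\times\widetilde E$ even though the paper records them against $\widetilde p$ and $\widetilde E$, and the midpoint value of the forcing has to be identified with the endpoint average. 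From the error equations onward the two arguments coincide: the same test functions $2\Delta t\,\varepsilon^{-1}(e_p^n+e_p^{n-1})$, $2\Delta t\,(e_E^n+e_E^{n-1})$, $2\Delta t\,(e_H^n+e_H^{n-1})$, cancellation of the gradient and curl couplings, the polarization identity, summation, the discrete Gronwall inequality of Lemma~\ref{lemma:gronwall_dscrt} under $\gamma_n\Delta t<1$, and the equivalence of $1$- and $2$-norms. Your route buys a shorter consistency computation and a constant depending only on $\partial_t^3$ of the fields in the weighted $L^2$ norms; the paper's route yields the same $O((\Delta t)^2)$ rate but with the extra mixed space-time derivative norms in the constant.
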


\begin{proof}
Using the Taylor remainder theorem, and expressing $p(t)$ about $t = t^{n - 1/2}$, we have that:
\[
  p(t) = p(t^{n - \frac{1}{2}}) + \dfrac{\partial p}{\partial t}(t^{n - \frac{1}{2}}) (t - t^{n - \frac{1}{2}}) + \dfrac{\partial^2 p}{\partial t^2}(t^{n - \frac{1}{2}}) \dfrac{(t - t^{n - \frac{1}{2}})^2}{2} + \int\limits_{\mathclap{t^{n - \frac{1}{2}}}}^{t} \dfrac{(t - s)^2}{2} \dfrac{\partial^3 p}{\partial t^3}(s) ds,
\]
which when evaluated at $t = t^n$ and $t = t^{n - 1}$ provides us with:
\begin{align*}
    p(t^n) &= p(t^{n - \frac{1}{2}}) + \dfrac{\partial p}{\partial t}(t^{n - \frac{1}{2}}) (t^n - t^{n - \frac{1}{2}}) + \dfrac{\partial^2 p}{\partial t^2}(t^{n - \frac{1}{2}}) \dfrac{(t^n - t^{n - \frac{1}{2}})^2}{2} + \int\limits_{\mathclap{t^{n - \frac{1}{2}}}}^{t^n} \dfrac{(t^n - s)^2}{2} \dfrac{\partial^3 p}{\partial t^3}(s) ds, \\
  p(t^{n - 1}) &= p(t^{n - \frac{1}{2}}) + \dfrac{\partial p}{\partial t}(t^{n - \frac{1}{2}}) (t^{n - 1} - t^{n - \frac{1}{2}}) + \dfrac{\partial^2 p}{\partial t^2}(t^{n - \frac{1}{2}}) \dfrac{(t^{n - 1} - t^{n - \frac{1}{2}})^2}{2} + \int\limits_{\mathclap{t^{n - \frac{1}{2}}}}^{\mathclap{t^{n - 1}}} \dfrac{(t^{n - 1} - s)^2}{2} \dfrac{\partial^3 p}{\partial t^3}(s) ds.
\end{align*}
Subtracting the second of these equations from the first, we get:
\[
  \left( p(t^n) - p(t^{n - 1}) \right) = \dfrac{\partial p}{\partial t}(t^{n - \frac{1}{2}}) (t^n - t^{n - 1}) + \int\limits_{\mathclap{t^{n - \frac{1}{2}}}}^{t^n} \dfrac{(t^n - s)^2}{2} \dfrac{\partial^3 p}{\partial t^3}(s) ds - \int\limits_{\mathclap{t^{n - \frac{1}{2}}}}^{\mathclap{t^{n - 1}}} \dfrac{(t^{n - 1} - s)^2}{2} \dfrac{\partial^3 p}{\partial t^3}(s) ds,
\]
which then when cast into the form of the inner product term from the semidiscretization of the variational formulation leads to:
\[
  \ainnerproduct{\dfrac{p(t^n) - p(t^{n - 1})}{\Delta t}}{\widetilde{p}} = \ainnerproduct{\dfrac{\partial p}{\partial t}(t^{n - \frac{1}{2}})}{\widetilde{p}} + \dfrac{1}{2}\ainnerproduct{R^n_p}{\widetilde{p}},
\]
and in which we have defined that:
\[
  R^n_p \coloneq \dfrac{1}{\Delta t} \left[ \int\limits_{t^{n - 1}}^{t^{n - \frac{1}{2}}} (t^{n - 1} - s)^2 \dfrac{\partial^3 p}{\partial t^3}(s) ds + \int\limits_{\mathclap{t^{n - \frac{1}{2}}}}^{t^n} (t^n - s)^2 \dfrac{\partial^3 p}{\partial t^3}(s) ds \right].
\]
Similarly, for $E$ and $H$, we have the following inner product terms from the variational formulation:
\begin{align*}
  \ainnerproduct{\varepsilon \dfrac{E(t^n) - E(t^{n - 1})}{\Delta t}}{\widetilde{E}} &= \ainnerproduct{\varepsilon \dfrac{\partial E}{\partial t}(t^{n - \frac{1}{2}})}{\widetilde{E}} + \dfrac{1}{2} \ainnerproduct{\varepsilon R^n_E}{\widetilde{E}}, \\
  \ainnerproduct{\mu \dfrac{H(t^n) - H(t^{n - 1})}{\Delta t}}{\widetilde{H}} &= \ainnerproduct{\mu \dfrac{\partial H}{\partial t}(t^{n - \frac{1}{2}})}{\widetilde{H}} + \dfrac{1}{2} \ainnerproduct{\mu R^n_H}{\widetilde{H}},
\end{align*}
and in each of which we have defined that:
\begin{align*}
  R^n_E &\coloneq \dfrac{1}{\Delta t} \left[\int\limits_{t^{n - 1}}^{t^{n - \frac{1}{2}}} (t^{n - 1} - s)^2 \dfrac{\partial^3 E}{\partial t^3}(s) ds + \int\limits_{\mathclap{t^{n - \frac{1}{2}}}}^{t^n} (t^n - s)^2 \dfrac{\partial^3 E}{\partial t^3}(s) ds \right], \\
  R^n_H &\coloneq \dfrac{1}{\Delta t} \left[\int\limits_{t^{n - 1}}^{t^{n - \frac{1}{2}}} (t^{n - 1} - s)^2 \dfrac{\partial^3 H}{\partial t^3}(s) ds + \int\limits_{\mathclap{t^{n - \frac{1}{2}}}}^{t^n} (t^n - s)^2 \dfrac{\partial^3 H}{\partial t^3}(s) ds \right].
\end{align*}
Using these terms in Equations~\labelcref{eqn:maxwell_p_wf,eqn:maxwell_E_wf,eqn:maxwell_H_wf} at time $t = t^{n - \frac{1}{2}}$ and with the definition that:
\[
  f(t^{n - \frac{1}{2}}) \coloneq \dfrac{f(t^n) + f(t^{n - 1})}{2},
\] 
where $f$ is any of $f_p$, $f_E$ or $f_H$, and wielding the Taylor remainder theorem to also write:
\[
  u(t^{n - \frac{1}{2}}) \coloneq \dfrac{u(t^n) + u(t^{n - 1})}{2} - r_u^n,
\] 
for $u$ being $p$, $E$ or $H$, and with the definition that:
\[
r_u^n \coloneqq \left[ \int\limits_{t^{n - \frac{1}{2}}}^{t^n} (t^n - s) \dfrac{\partial^2 u}{\partial t^2}(s) ds - \int\limits_{\mathclap{t^{n - 1}}}^{\mathclap{t^{n - \frac{1}{2}}}} (t^{n - 1} - s) \dfrac{\partial^2 u}{\partial t^2}(s) ds\right],
\]
leads to the following set of equations:
\begin{subequations}
  \begin{equation}
    \ainnerproduct{\dfrac{p(t^n) - p(t^{n - 1})}{\Delta t}}{\widetilde{p}} - \dfrac{1}{2} \aInnerproduct{ \varepsilon \left(E(t^n) + E(t^{n - 1})\right)}{\nabla \widetilde{p}} = 
    \dfrac{1}{2} \aInnerproduct{f_p^n + f_p^{n - 1} + R_p^n - \varepsilon r_E^n }{\widetilde{p}}, \label{eqn:remainder_p_cn}
  \end{equation} 
  \begin{multline}
    \dfrac{1}{2} \aInnerproduct{\nabla \left( p(t^n) + p(t^{n - 1}) \right)}{\widetilde{E}} + \ainnerproduct{\varepsilon \dfrac{E(t^n) - E(t^{n - 1})}{\Delta t}}{\widetilde{E}} - \dfrac{1}{2} \aInnerproduct{H(t^n) + H(t^{n - 1})}{\nabla \times \widetilde{E}} = \\
    \dfrac{1}{2} \aInnerproduct{f_E^n + f_E^{n - 1} + \varepsilon R_E^n + \nabla r_p^n - r_H^n}{\widetilde{E}}, \label{eqn:remainder_E_cn}
  \end{multline}
  \begin{equation}
    \ainnerproduct{\mu \dfrac{H(t^n) - H(t^{n - 1})}{\Delta t}}{\widetilde{H}} + \dfrac{1}{2} \aInnerproduct{\nabla \times \left(E(t^n) + E(t^{n - 1}) \right)}{\widetilde{H}} = \dfrac{1}{2} \aInnerproduct{f_H^n + f_H^{n - 1} + \mu R_H^n + \nabla \times r_E^n}{\widetilde{H}}. \label{eqn:remainder_H_cn}
  \end{equation}
\end{subequations}
Subtracting the equations defining the Crank-Nicholson semidiscretization as in Equations~\labelcref{eqn:maxwell_p_cn,eqn:maxwell_E_cn,eqn:maxwell_H_cn} from Equations~\labelcref{eqn:remainder_p_cn,eqn:remainder_E_cn,eqn:remainder_H_cn} leads us to:
\begin{align*}
\aInnerproduct{\dfrac{e_p^n - e_p^{n - 1}}{\Delta t}}{\widetilde{p}} - \dfrac{1}{2}\aInnerproduct{ \varepsilon \left(e_E^n + e_E^{n - 1} \right)}{\nabla \widetilde{p}} &= \dfrac{1}{2} \aInnerproduct{ R_p^n - \varepsilon r_E^n}{\widetilde{p}}, \\
\dfrac{1}{2} \aInnerproduct{\nabla \left( e_p^n +  e_p^{n - 1} \right) }{\widetilde{E}} + \aInnerproduct{\varepsilon \dfrac{e_E^n - e_E^{n - 1}}{\Delta t}}{\widetilde{E}} - \dfrac{1}{2}\aInnerproduct{e_H^n + e_H^{n - 1}}{\nabla \times \widetilde{E}} &= \dfrac{1}{2} \aInnerproduct{\varepsilon R_E^n + \nabla r_p^n - r_H^n}{\widetilde{E}}, \\
\aInnerproduct{\mu \dfrac{e_H^n - e_H^{n - 1}}{\Delta t}}{\widetilde{H}} + \dfrac{1}{2} \aInnerproduct{\nabla \times \left( e_E^n + e_E^{n - 1} \right) }{\widetilde{H}} &= \dfrac{1}{2} \aInnerproduct{\mu  R_H^n + \nabla \times r_E^n}{\widetilde{H}}.
\end{align*}
Now, in this set of weak formulation equations for the errors, we choose the test functions to be $\widetilde{p} = 2 \Delta t \varepsilon^{-1} \left( e_p^n + e_p^{n - 1} \right)$, $\widetilde{E} = 2 \Delta t \left( e_E^n + e_E^{n - 1} \right)$ and $\widetilde{H} = 2 \Delta t \left( e_H^n + e_H^{n - 1} \right)$. Next, by following essentially the same sequence of steps as in Theorem~\ref{thm:dscrt_enrgy_estmt_cn}, we obtain the estimate for these error terms to be:
\begin{multline*}
  2 \left[ \norm{e_p^n}^2_{\varepsilon^{-1}} - \norm{e_p^{n - 1}}^2_{\varepsilon^{-1}} + \norm{e_E^n}^2_{\varepsilon} - \norm{e_E^{n - 1}}^2_{\varepsilon} + \norm{e_H^n}^2_{\mu} - \norm{e_H^{n - 1}}^2_{\mu} \right] \le \\ \Delta t \left[ \norm{e_p^n}^2_{\varepsilon^{-1}} + \norm{e_p^{n - 1}}^2_{\varepsilon^{-1}} +  \norm{e_E^n}^2_{\varepsilon} \right. + \left. \norm{e_E^{n - 1}}^2_{\varepsilon} + \norm{e_H^n}^2_{\mu} + \norm{e_H^{n - 1}}^2_{\mu} \right] + \\
  \Delta t \left[\norm{R_p^n} ^2_{\varepsilon^{-1}} + \norm{R_E^n}^2_{\varepsilon} +  \norm{R_H^n}^2_{\mu} + \norm{\nabla r_p^n} ^2_{\varepsilon^{-1}} + \norm{r_E^n}^2_{\varepsilon} + \varepsilon^{-1} \mu^{-1} \norm{\nabla \times r_E^n}^2_{\varepsilon} +  \varepsilon^{-1} \mu^{-1} \norm{r_H^n}^2_{\mu} \right].
\end{multline*}
Now, summing over $n = 1$ to $N$, using the initial conditions as in Equation~\eqref{eqn:ICs}, and the positivity of all the right hand side terms, we have that:
\begin{multline*}
  \norm{e_p^N}^2_{\varepsilon^{-1}} + \norm{e_E^N}^2_{\varepsilon} + \norm{e_H^N}^2_{\mu} \le \Delta t \sum\limits_{n = 0}^{N} \left[ \norm{e_p^n}^2_{\varepsilon^{-1}} +\norm{e_E^n}^2_{\varepsilon} + \norm{e_H^n}^2_{\mu} \right] + \Delta t \sum\limits_{n = 0}^{N} \left[\norm{R_p^n}^2_{\varepsilon^{-1}} + \norm{R_E^n}^2_{\varepsilon} + \norm{R_H^n}^2_{\mu} \right. + \\ \left. \norm{\nabla r_p^n} ^2_{\varepsilon^{-1}} + \norm{r_E^n}^2_{\varepsilon} + \varepsilon^{-1} \mu^{-1} \norm{\nabla \times r_E^n}^2_{\varepsilon} +  \varepsilon^{-1} \mu^{-1} \norm{r_H^n}^2_{\mu}\right] + \left[ \norm{e_p^0}^2_{\varepsilon^{-1}} + \norm{e_E^0}^2_{\varepsilon} + \norm{e_H^0}^2_{\mu} \right].
\end{multline*}
Next to apply the discrete Gronwall inequality as in Lemma~\ref{lemma:gronwall_dscrt}, we choose $\delta \coloneq \Delta t$, $g_0 \coloneq \norm{e_p^0}^2_{\varepsilon^{-1}} + \norm{e_E^0}^2_{\varepsilon} + \norm{e_H^0}^2_{\mu}$, $a_n \coloneq \norm{e_p^n}^2_{\varepsilon^{-1}} + \norm{e_E^n}^2_{\varepsilon} + \norm{e_H^n}^2_{\mu}$, $b_n \coloneq 0$, $c_n \coloneq \norm{R_p^n}^2_{\varepsilon^{-1}} + \norm{R_E^n}^2_{\varepsilon} + \norm{R_H^n}^2_{\mu} + \norm{\nabla r_p^n} ^2_{\varepsilon^{-1}} + \norm{r_E^n}^2_{\varepsilon} + \varepsilon^{-1} \mu^{-1} \norm{\nabla \times r_E^n}^2_{\varepsilon} +  \varepsilon^{-1} \mu^{-1} \norm{r_H^n}^2_{\mu}$, and $\gamma_n \coloneq 1$. Now the required condition $\gamma_n \delta < 1$ holds with $\Delta t < 1/2$ and $\sigma_n = \left( 1 - \Delta t \right)^{-1}$. Therefore, using that $(N + 1) \Delta t = T + \Delta t \le T + 1/2$, we get that:
\begin{multline}
  \norm{e_p^N}^2_{\varepsilon^{-1}} + \norm{e_E^N}^2_{\varepsilon} + \norm{e_H^N}^2_{\mu} \le \Big[ \Delta t \sum\limits_{n = 0}^{N} \left( \norm{R_p^n}^2_{\varepsilon^{-1}} + \norm{R_E^n}^2_{\varepsilon} + \norm{R_H^n}^2_{\mu} + \norm{\nabla r_p^n} ^2_{\varepsilon^{-1}} + \norm{r_E^n}^2_{\varepsilon} + \right. \\ \left. \varepsilon^{-1} \mu^{-1} \norm{\nabla \times r_E^n}^2_{\varepsilon} +  \varepsilon^{-1} \mu^{-1} \norm{r_H^n}^2_{\mu} \right) + 
  \left( \norm{e_p^0}^2_{\varepsilon^{-1}} + \norm{e_E^0}^2_{\varepsilon} + \norm{e_H^0}^2_{\mu} \right) \Big] \exp\left( 2 T + 1 \right). \label{eqn:error_Gronwall_cn}
\end{multline}
Now, we need to obtain bounding estimates for each of the Taylor remainder terms and to do so, we first consider the first remainder term corresponding to $p$ and argue as follows:
\begin{align*}
\norm{R^n_p}^2_{\varepsilon^{-1}} &= \dfrac{1}{\left( \Delta t \right)^2} \norm[\bigg]{\int\limits_{\mathclap{t^{n - 1}}}^{\mathclap{t^{n - \frac{1}{2}}}} (t^{n - 1} - s)^2 \dfrac{\partial^3 p}{\partial t^3}(s) ds + \int\limits_{\mathclap{t^{n - \frac{1}{2}}}}^{t^n} (t^n - s)^2 \dfrac{\partial^3 p}{\partial t^3}(s) ds}^2_{\varepsilon^{-1}}, \\
&\le \dfrac{1}{\left( \Delta t \right)^2} \norm[\bigg]{\int\limits_{t^{n - 1}}^{t^n} (t^n - s)^2 \dfrac{\partial^3 p}{\partial t^3}(s) ds}^2_{\varepsilon^{-1}}, \quad \text{(using $t^{n - 1} < t^n$)} \\
&\le \dfrac{1}{\left(\Delta t\right)^2} \int\limits_{t^{n - 1}}^{t^n} (s - t^n)^4 ds \int\limits_{\mathclap{t^{n - 1}}}^{t^n} \norm[\bigg]{\dfrac{\partial^3 p}{\partial t^3}(s)}^2_{\varepsilon^{-1}} ds, \quad \text{(by Cauchy-Schwarz)} \\
&= \dfrac{\left( \Delta t \right)^3}{5} \int\limits_{\mathclap{t^{n - 1}}}^{t^n} \norm[\bigg]{\dfrac{\partial^3 p}{\partial t^3}(s)}^2_{\varepsilon^{-1}} ds,
\end{align*}
and now summing both sides over $n = 0$ to $N$, we have that:
\begin{equation}
  \sum\limits_{n = 0}^N \norm{R^n_p}^2_{\varepsilon^{-1}} \le \dfrac{\left( \Delta t \right)^3}{5} \int\limits_0^T \norm[\bigg]{\dfrac{\partial^3 p}{\partial t^3}(s)}^2_{\varepsilon^{-1}} ds = \dfrac{\left( \Delta t \right)^3}{5} \norm[\bigg]{\dfrac{\partial^3 p}{\partial t^3}}^2_{L^2[0, T] \times L^2_{\varepsilon^{-1}}(\Omega)}. \label{eqn:Remainder_norm_p_cn}
\end{equation}
Similarly, for the remaining Taylor remainder terms, we have that:
\begin{alignat}{3}
\sum\limits_{n = 0}^N \norm{R^n_E}^2_{\varepsilon} &\le \dfrac{\left( \Delta t \right)^3}{5} \norm[\bigg]{\dfrac{\partial^3 E}{\partial t^3}}^2_{L^2[0, T] \times L^2_\varepsilon(\Omega)}, & \sum\limits_{n = 0}^N \norm{R^n_H}^2_\mu &\le \dfrac{\left( \Delta t \right)^3}{5} \norm[\bigg]{\dfrac{\partial^3 H}{\partial t^3}}^2_{L^2[0, T] \times L^2_\mu(\Omega)}, \nonumber \\
\sum\limits_{n = 0}^N \norm{\nabla r^n_p}^2_{\varepsilon^{-1}} &\le  \dfrac{\left( \Delta t \right)^3}{12} \norm[\bigg]{\dfrac{\partial^2 \left(\nabla p \right)}{\partial t^2}}^2_{L^2[0, T] \times \mathring{H}^1_{\varepsilon^{-1}}(\Omega)}, & \sum\limits_{n = 0}^N \norm{r^n_E}^2_{\varepsilon} &\le \dfrac{\left( \Delta t \right)^3}{12} \norm[\bigg]{\dfrac{\partial^2 E}{\partial t^2}}^2_{L^2[0, T] \times L^2_\varepsilon(\Omega)}, \label{eqn:Remainder_norms_pEH_cn} \\
\sum\limits_{n = 0}^N \norm{\nabla \times r^n_E}^2_{\varepsilon} &\le \dfrac{\left( \Delta t \right)^3}{12} \norm[\bigg]{\dfrac{\partial^2 (\nabla \times E)}{\partial t^2}}^2_{L^2[0, T] \times \mathring{H}_\varepsilon(\curl; \Omega)}, & \sum\limits_{n = 0}^N \norm{r^n_H}^2_\mu &\le \dfrac{\left( \Delta t \right)^3}{12} \norm[\bigg]{\dfrac{\partial^2 H}{\partial t^2}}^2_{L^2[0, T] \times L^2_\mu(\Omega)} \nonumber.
\end{alignat}
By substituting all these Taylor remainder term estimates into Equation~\eqref{eqn:error_Gronwall_cn}, we obtain:
\begin{multline*}
\norm{e_p^N}^2_{\varepsilon^{-1}} + \norm{e_E^N}^2_{\varepsilon} + \norm{e_H^N}^2_{\mu} \le \Bigg[ \dfrac{\left( \Delta t \right)^4}{5} \left( \norm[\bigg]{\dfrac{\partial^3 p}{\partial t^3}}^2_{L^2[0, T] \times L^2_{\varepsilon^{-1}}(\Omega)} \! + \norm[\bigg]{\dfrac{\partial^3 E}{\partial t^3}}^2_{L^2[0, T] \times L^2_\varepsilon(\Omega)} \! + \norm[\bigg]{\dfrac{\partial^3 H}{\partial t^3}}^2_{L^2[0, T] \times L^2_\mu(\Omega)} \right) + \\
\dfrac{\left( \Delta t \right)^4}{12} \left( \norm[\bigg]{\dfrac{\partial^2 \left( \nabla p \right)}{\partial t^2}}^2_{L^2[0, T] \times \mathring{H}^1_{\varepsilon^{-1}}(\Omega)} \! + \norm[\bigg]{\dfrac{\partial^2 E}{\partial t^2}}^2_{L^2[0, T] \times L^2_\varepsilon(\Omega)} \! + \varepsilon^{-1} \mu^{-1} \norm[\bigg]{\dfrac{\partial^2 (\nabla \times E)}{\partial t^2}}^2_{L^2[0, T] \times \mathring{H}_\varepsilon(\curl; \Omega)} + \right. \\
\left. \varepsilon^{-1} \mu^{-1} \norm[\bigg]{\dfrac{\partial^2 H}{\partial t^2}}^2_{L^2[0, T] \times L^2_\mu(\Omega)} \right) + \left( \norm{e_p^0}^2_{\varepsilon^{-1}} + \norm{e_E^0}^2_{\varepsilon} + \norm{e_H^0}^2_{\mu} \right) \Bigg] \exp \left( 2 T + 1 \right).
\end{multline*}
By our assumption on the regularity of $p$, $E$ and $H$, there exists a positive bounded constant $M$ such that:
\begin{multline*}
\dfrac{1}{5} \left( \norm[\bigg]{\dfrac{\partial^3 p}{\partial t^3}}^2_{L^2[0, T] \times L^2_{\varepsilon^{-1}}(\Omega)} \! + \norm[\bigg]{\dfrac{\partial^3 E}{\partial t^3}}^2_{L^2[0, T] \times L^2_\varepsilon(\Omega)} \! + \norm[\bigg]{\dfrac{\partial^3 H}{\partial t^3}}^2_{L^2[0, T] \times L^2_\mu(\Omega)} \right) + \dfrac{1}{12} \left( \norm[\bigg]{\dfrac{\partial^2 \left(\nabla p \right)}{\partial t^2}}^2_{L^2[0, T] \times \mathring{H}^1_{\varepsilon^{-1}}(\Omega)} + \right. \\
\left. \norm[\bigg]{\dfrac{\partial^2 E}{\partial t^2}}^2_{L^2[0, T] \times L^2_\varepsilon(\Omega)} \! + \varepsilon^{-1} \mu^{-1} \norm[\bigg]{\dfrac{\partial^2 (\nabla \times E)}{\partial t^2}}^2_{L^2[0, T] \times \mathring{H}_\varepsilon(\curl; \Omega)} \! + \varepsilon^{-1} \mu^{-1} \norm[\bigg]{\dfrac{\partial^2 H}{\partial t^2}}^2_{L^2[0, T] \times L^2_\mu(\Omega)} \right) \le M.
\end{multline*}
Thus, we have that:
\[
  \norm{e_p^N}^2_{\varepsilon^{-1}} + \norm{e_E^N}^2_{\varepsilon} + \norm{e_H^N}^2_{\mu} \le \widetilde{C} \left[ (\Delta t)^4 + \norm{e_p^0}^2_{\varepsilon^{-1}} + \norm{e_E^0}^2_{\varepsilon} + \norm{e_H^0}^2_{\mu} \right],
\]
where $\widetilde{C} = \exp(2 T +1) \max\{M, 1\}$. Finally using the equivalence of $1$- and $2$-norms, our desired result follows with $C = \sqrt{3 \widetilde{C}}$.
\end{proof}

\subsection{Implicit Leapfrog Scheme}

\begin{theorem}[Discrete Energy Estimate] \label{thm:dscrt_enrgy_estmt_lf}
  For the semidiscretization using the implicit leapfrog scheme as given in Equations~\labelcref{eqn:maxwell_p_lf,eqn:maxwell_E_lf,eqn:maxwell_H_lf,eqn:maxwell_p0_lf,eqn:maxwell_E0_lf,eqn:maxwell_H0_lf}, for any time step $\Delta t$ sufficiently small, there exists a positive bounded constant $C$ independent of $\Delta t$ such that:
\[
  \norm{p^{N - \frac{1}{2}}}_{\varepsilon^{-1}} + \norm{E^{N - \frac{1}{2}}}_{\varepsilon} + \norm{H^N}_{\mu} \le C.
\]
\end{theorem}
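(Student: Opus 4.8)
The plan is to follow the proof of Theorem~\ref{thm:dscrt_enrgy_estmt_cn} essentially line for line, carefully tracking the staggered time indices and supplementing it with a short companion argument for the bootstrapping step. For a generic step $n$, I would test Equations~\labelcref{eqn:maxwell_p_lf,eqn:maxwell_E_lf,eqn:maxwell_H_lf} with $\widetilde{p} = 2 \Delta t \, \varepsilon^{-1} \bigl( p^{n + \frac{1}{2}} + p^{n - \frac{1}{2}} \bigr)$, $\widetilde{E} = 2 \Delta t \bigl( E^{n + \frac{1}{2}} + E^{n - \frac{1}{2}} \bigr)$ and $\widetilde{H} = 2 \Delta t \bigl( H^{n + 1} + H^n \bigr)$, and add the three resulting identities. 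As in the Crank--Nicholson case the mixed $(p, E)$ terms cancel, since the quantity $\aInnerproduct{\varepsilon \bigl( E^{n + \frac{1}{2}} + E^{n - \frac{1}{2}} \bigr)}{\nabla \bigl( p^{n + \frac{1}{2}} + p^{n - \frac{1}{2}} \bigr)}$ appears with opposite signs, and likewise the $(E, H)$ terms cancel through $\aInnerproduct{\nabla \times \bigl( E^{n + \frac{1}{2}} + E^{n - \frac{1}{2}} \bigr)}{H^{n + 1} + H^n}$. What remains on the left, after the polarization identity, is $2 \bigl[ \norm{p^{n + \frac{1}{2}}}^2_{\varepsilon^{-1}} - \norm{p^{n - \frac{1}{2}}}^2_{\varepsilon^{-1}} + \norm{E^{n + \frac{1}{2}}}^2_{\varepsilon} - \norm{E^{n - \frac{1}{2}}}^2_{\varepsilon} + \norm{H^{n + 1}}^2_{\mu} - \norm{H^n}^2_{\mu} \bigr]$, while the right-hand side --- where $f_p^n$, $f_E^n$ and $f_H^{n + \frac{1}{2}}$ are paired against the test functions --- is bounded using Cauchy--Schwarz, the polarization inequality, and the AM--GM inequality exactly as in Theorem~\ref{thm:dscrt_enrgy_estmt_cn}, producing squared forcing norms plus squared energy terms at the indices $n \pm \frac{1}{2}$ and $n + 1, n$.

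With $a_n \coloneq \norm{p^{n + \frac{1}{2}}}^2_{\varepsilon^{-1}} + \norm{E^{n + \frac{1}{2}}}^2_{\varepsilon} + \norm{H^{n + 1}}^2_{\mu}$ for $n = 0, \dots, N - 1$, the left-hand side above equals $a_n - a_{n - 1}$, so summing over $n = 1, \dots, N - 1$ telescopes and bounds $a_{N - 1}$ in terms of $a_0$, a $\Delta t$-weighted sum of the $a_n$, and a $\Delta t$-weighted sum of squared forcing norms. To bound $a_0 = \norm{p^{\frac{1}{2}}}^2_{\varepsilon^{-1}} + \norm{E^{\frac{1}{2}}}^2_{\varepsilon} + \norm{H^1}^2_{\mu}$ by the initial data I would run the same calculation on the bootstrap Equations~\labelcref{eqn:maxwell_p0_lf,eqn:maxwell_E0_lf,eqn:maxwell_H0_lf}; there the asymmetric $\tfrac{1}{2}$ prefactors on the coupling terms force the test functions to be scaled differently --- concretely $\widetilde{p} = 2 \Delta t \, \varepsilon^{-1} \bigl( p^{\frac{1}{2}} + p_0 \bigr)$, $\widetilde{E} = 2 \Delta t \bigl( E^{\frac{1}{2}} + E_0 \bigr)$ and $\widetilde{H} = 4 \Delta t \bigl( H^1 + H_0 \bigr)$, the factor $4$ on the last being exactly what makes the $(p, E)$ and $(E, H)$ cross terms cancel pairwise. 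After this cancellation the polarization identity together with the same Cauchy--Schwarz and AM--GM estimates gives $a_0 \le \norm{p_0}^2_{\varepsilon^{-1}} + \norm{E_0}^2_{\varepsilon} + \norm{H_0}^2_{\mu}$ plus $\Delta t$ times bounded squared forcing and energy contributions evaluated at $t^0$ and $t^{\frac{1}{2}}$, which, absorbed into the telescoped inequality for $\Delta t$ small, leaves everything in Gronwall-ready form.

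Combining these two ingredients produces an inequality of the shape $a_{N - 1} \le \Delta t \sum_{n = 0}^{N - 1} a_n + \Delta t \sum_n c_n + g_0$ with $g_0 = \norm{p_0}^2_{\varepsilon^{-1}} + \norm{E_0}^2_{\varepsilon} + \norm{H_0}^2_{\mu}$ and $c_n$ collecting the squared forcing norms --- precisely the hypothesis of the discrete Gronwall inequality, Lemma~\ref{lemma:gronwall_dscrt}, taken with $\delta = \Delta t$, $b_n = 0$ and $\gamma_n = 1$, so that $\Delta t < \tfrac{1}{2}$ guarantees $\gamma_n \delta < 1$ and $\sigma_n = (1 - \Delta t)^{-1}$. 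Applying the lemma and using $(N + 1) \Delta t = T + \Delta t < T + \tfrac{1}{2}$ bounds the exponential factor by $\exp(2 T + 1)$; the $\Delta t$-weighted forcing sums are Riemann sums for $\int_0^T \bigl( \norm{f_p}^2_{\varepsilon^{-1}} + \norm{f_E}^2_{\varepsilon^{-1}} + \norm{f_H}^2_{\mu^{-1}} \bigr) \, dt$ and hence bounded by the corresponding $L^2([0, T] ; L^2)$ norms of the data, just as in Theorem~\ref{thm:dscrt_enrgy_estmt_cn}; and finally the equivalence of the $1$- and $2$-norms turns the resulting bound on $a_{N - 1} = \norm{p^{N - \frac{1}{2}}}^2_{\varepsilon^{-1}} + \norm{E^{N - \frac{1}{2}}}^2_{\varepsilon} + \norm{H^N}^2_{\mu}$ into the asserted bound on $\norm{p^{N - \frac{1}{2}}}_{\varepsilon^{-1}} + \norm{E^{N - \frac{1}{2}}}_{\varepsilon} + \norm{H^N}_{\mu}$. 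The one genuinely new point relative to Theorem~\ref{thm:dscrt_enrgy_estmt_cn}, and the step I expect to be the main obstacle, is fixing the test-function scalings in the bootstrap step so that every coupling term cancels despite the asymmetric $\tfrac{1}{2}$ factors, and then verifying that the $a_0$ estimate produced there slots cleanly into the telescoping sum and the Gronwall application.
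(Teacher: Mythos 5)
Your proposal is correct and follows essentially the same route as the paper's proof: the same test functions at the generic step, the same factor-$4$ scaling of $\widetilde{H}$ in the bootstrap step (which the paper also uses precisely to make the cross terms cancel), the same telescoping, combination of the two inequalities, application of Lemma~\ref{lemma:gronwall_dscrt}, Riemann-sum bound on the forcing terms, and the $1$-/$2$-norm equivalence at the end. The only discrepancy is bookkeeping of constants: because the leapfrog right-hand side carries the unaveraged forcing $f^n$, bounding $2\aInnerproduct{f^n}{u^{n+\frac{1}{2}} + u^{n-\frac{1}{2}}}$ ``exactly as in Theorem~\ref{thm:dscrt_enrgy_estmt_cn}'' produces a coefficient $2$ on the energy terms, so the paper takes $\gamma_n = 2$ (hence $\Delta t < 1/3$, and $\Delta t < 1/6$ for the clean factors $\tfrac{6}{5}$, $\tfrac{7}{5}$, $\exp(4T)$), whereas your $\gamma_n = 1$ with $\Delta t < 1/2$ requires rebalancing the polarization inequality at the cost of a factor on the forcing norms --- a harmless adjustment that does not affect the claimed $\Delta t$-independent bound.
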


\begin{proof}
  Since Equations~\labelcref{eqn:maxwell_p_lf,eqn:maxwell_E_lf,eqn:maxwell_H_lf} are true for all $\widetilde{p}\in \mathring{H}^1_{\varepsilon^{-1}}(\Omega)$, $\widetilde{E} \in \mathring{H}_{\varepsilon}(\curl; \Omega)$, $\widetilde{H} \in \mathring{H}_{\mu}(\divgn; \Omega)$, using $\widetilde{p} = 2 \Delta t \varepsilon^{-1} \left( p^{n + \frac{1}{2}} + p^{n - \frac{1}{2}} \right)$, $\widetilde{E} = 2 \Delta t \left(E^{n + \frac{1}{2}} + E^{n - \frac{1}{2}} \right)$ and $\widetilde{H} = 2 \Delta t \left(H^{n + 1} + H^n \right)$ in them, we obtain the following:
  \begin{multline*}
    2 \aInnerproduct{p^{n+\frac{1}{2}} - p^{n - \frac{1}{2}}}{\varepsilon^{-1} \left( p^{n + \frac{1}{2}} + p^{n-\frac{1}{2}} \right)} - \Delta t \aInnerproduct{E^{n + \frac{1}{2}} + E^{n - \frac{1}{2}}}{\nabla \left( p^{n + \frac{1}{2}} + p^{n - \frac{1}{2}} \right)} = 2 \Delta t \aInnerproduct{f_p^n}{\varepsilon^{-1} \left(p^{n + \frac{1}{2}} + p^{n - \frac{1}{2}}\right)},
\end{multline*}
\vspace{-1em} \begin{multline*}
  \Delta t \aInnerproduct{\nabla \left( p^{n + \frac{1}{2}} + p^{n - \frac{1}{2}} \right)}{E^{n + \frac{1}{2}} + E^{n - \frac{1}{2}}} + 2 \aInnerproduct{\varepsilon \left(E^{n + \frac{1}{2}} - E^{n - \frac{1}{2}} \right)}{E^{n + \frac{1}{2}} + E^{n - \frac{1}{2}}} \, - \\
  \Delta t \aInnerproduct{\left( H^{n + 1} + H^n \right)}{\nabla \times \left( E^{n + \frac{1}{2}} + E^{n - \frac{1}{2}} \right)} = 2 \Delta t \aInnerproduct{f_E^n}{E^{n + \frac{1}{2}} + E^{n - \frac{1}{2}}},
\end{multline*}
\vspace{-1em} \begin{multline*}
  2 \aInnerproduct{\mu \left(H^{n + 1} - H^{n} \right)}{H^{n + 1} + H^n} + \Delta t \aInnerproduct{\nabla \times \left( E^{n + \frac{1}{2}} + E^{n - \frac{1}{2}} \right)}{H^{n + 1} + H^n} = 2 \Delta t \aInnerproduct{f_H^{n + \frac{1}{2}}}{H^{n + 1} + H^n}.
\end{multline*}
Adding these equations together and using properties of the inner product, we get:
\begin{multline*}
  2 \aInnerproduct{\varepsilon^{-1} \left( p^{n + \frac{1}{2}} - p^{n - \frac{1}{2}} \right)}{p^{n + \frac{1}{2}} + p^{n - \frac{1}{2}}} + 2 \aInnerproduct{\varepsilon \left(E^{n + \frac{1}{2}} - E^{n - \frac{1}{2}} \right)}{E^{n + \frac{1}{2}} + E^{n - \frac{1}{2}}} + 2 \aInnerproduct{\mu \left( H^{n + 1} - H^{n} \right)}{H^{n + 1} + H^n} = \\
\Delta t \left( 2 \aInnerproduct{f_p^n}{\varepsilon^{-1} \left(p^{n + \frac{1}{2}} + p^{n - \frac{1}{2}} \right)} + 2 \aInnerproduct{f_E^n}{E^{n + \frac{1}{2}} + E^{n - \frac{1}{2}}} + 2 \aInnerproduct{f_H^{n + \frac{1}{2}}}{H^{n + 1} + H^n} \right).
\end{multline*}
Using the same arguments as in Theorem~\ref{thm:dscrt_enrgy_estmt_cn}, we have the following estimates for the right hand side terms of this equation:
\begin{align*}
2 \aInnerproduct{f_p^n}{\varepsilon^{-1} \left(p^{n + \frac{1}{2}} + p^{n - \frac{1}{2}} \right)} &\le \norm{f_p^n}^2_{\varepsilon^{-1}} + 2 \left( \norm{p^{n + \frac{1}{2}}}^2_{\varepsilon^{-1}} + \norm{p^{n - \frac{1}{2}}}^2_{\varepsilon^{-1}} \right) \\
2 \aInnerproduct{f_E^n}{E^{n + \frac{1}{2}} + E^{n - \frac{1}{2}}} &\le \norm{f_E^n}^2_{\varepsilon^{-1}} + 2 \left( \norm{E^{n + \frac{1}{2}}}^2_{\varepsilon} + \norm{E^{n - \frac{1}{2}}}^2_{\varepsilon} \right), \\ 
2 \aInnerproduct{f_H^{n + \frac{1}{2}}}{H^{n + 1} + H^n} &\le \norm{f_H^{n + \frac{1}{2}}}^2_{\mu^{-1}} + 2 \left( \norm{H^{n + 1}}^2_{\mu} + \norm{H^n}^2_{\mu} \right).
\end{align*}
Using these inequalities into the previous expression, and summing over $n = 1$ to $N - 1$ leads us to:
\begin{multline*}
  \norm{p^{N - \frac{1}{2}}}^2_{\varepsilon^{-1}} - \norm{p^\frac{1}{2}}^2_{\varepsilon^{-1}} + \norm{E^{N - \frac{1}{2}}}^2_{\varepsilon} - \norm{E^\frac{1}{2}}^2_{\varepsilon} + \norm{H^N}^2_{\mu} - \norm{H^1}^2_{\mu} \le \\
  \Delta t \sum\limits_{n = 1}^{N - 1} \left[ \norm{f_p^n}^2_{\varepsilon^{-1}} + \norm{f_E^n}^2_{\varepsilon^{-1}} + \norm{f_H^{n + \frac{1}{2}}}^2_{\mu^{-1}} \right] + 
  2 \Delta t \sum\limits_{n = 1}^{N - 2} \left[ \norm{p^{n + \frac{1}{2}}}^2_{\varepsilon^{-1}} + \norm{E^{n + \frac{1}{2}}}^2_{\varepsilon} + \norm{H^{n + 1}}^2_{\mu} \right] + \\
  \Delta t \left[ \norm{p^{N - \frac{1}{2}}}^2_{\varepsilon^{-1}} + \norm{p^{\frac{1}{2}}}^2_{\varepsilon^{-1}} + \norm{E^{N - \frac{1}{2}}}^2_{\varepsilon} + \norm{E^{\frac{1}{2}}}^2_{\varepsilon} + \norm{H^N}^2_{\mu} + \norm{H^1}^2_{\mu} \right]. 
\end{multline*}
Now, $p^{\frac{1}{2}}$, $E^{\frac{1}{2}}$ and $H^1$ satisfy Equations~\labelcref{eqn:maxwell_p0_lf,eqn:maxwell_E0_lf,eqn:maxwell_H0_lf}, and so using $\widetilde{p} = 2 \Delta t \varepsilon^{-1} \left( p^{\frac{1}{2}} + p^0 \right)$, $\widetilde{E} = 2 \Delta t \left(E^{\frac{1}{2}} + E^0 \right)$ and $\widetilde{H} = 4 \Delta t \left(H^{1} + H^0 \right)$ there and repeating the arguments presented above leads us to the following estimate:
\begin{multline*}
  \norm{p^{\frac{1}{2}}}^2_{\varepsilon^{-1}}  - \norm{p^0}^2_{\varepsilon^{-1}} + \norm{E^{\frac{1}{2}}}^2_{\varepsilon} - \norm{E^0}^2_{\varepsilon} + \norm{H^{1}}^2_{\mu} - \norm{H^0}^2_{\mu}  \le \\
  \Delta t \left[ \norm{f_p^0}^2_{\varepsilon^{-1}} + \norm{f_E^0}^2_{\varepsilon^{-1}} + \norm{f_H^{\frac{1}{2}}}^2_{\mu^{-1}} \right] + \Delta t \left[ \norm{p^{\frac{1}{2}}}^2_{\varepsilon^{-1}} + \norm{p^0}^2_{\varepsilon^{-1}}  + \norm{E^{\frac{1}{2}}}^2_{\varepsilon} + \norm{E^0}^2_{\varepsilon}  + \norm{H^{1}}^2_{\mu}  + \norm{H^0}^2_{\mu} \right] 
\end{multline*}
Adding these last two inequalities, we get:
\begin{multline*}
  \norm{p^{N - \frac{1}{2}}}^2_{\varepsilon^{-1}} + \norm{E^{N - \frac{1}{2}}}^2_{\varepsilon} + \norm{H^N}^2_{\mu} \le \left[ \norm{p^0}^2_{\varepsilon^{-1}} + \norm{E^0}^2_{\varepsilon} + \norm{H^0}^2_{\mu} \right] + \\
  \Delta t \sum\limits_{n = 0}^{N - 1} \left[ \norm{f_p^n}^2_{\varepsilon^{-1}} + \norm{f_E^n}^2_{\varepsilon^{-1}} + \norm{f_H^{n + \frac{1}{2}}}^2_{\mu^{-1}} \right] + 2 \Delta t \sum\limits_{n = 0}^{N - 2} \left[ \norm{p^{n + \frac{1}{2}}}^2_{\varepsilon^{-1}} +\norm{E^{n + \frac{1}{2}}}^2_{\varepsilon} +  \norm{H^{n + 1}}^2_{\mu} \right] + \\
  \Delta t \left[\norm{p^{N - \frac{1}{2}}}^2_{\varepsilon^{-1}} + \norm{p^0}^2_{\varepsilon^{-1}} + \norm{E^{N - \frac{1}{2}}}^2_{\varepsilon} +
\norm{E^0}^2_{\varepsilon}  + \norm{H^N}^2_{\mu} + \norm{H^0}^2_{\mu} \right],
\end{multline*}
which in turn leads to:
\begin{multline*}
  \norm{p^{N - \frac{1}{2}}}^2_{\varepsilon^{-1}} + \norm{E^{N - \frac{1}{2}}}^2_{\varepsilon} + \norm{H^N}^2_{\mu} \le \dfrac{1 + \Delta t}{1 - \Delta t} \left[ \norm{p^0}^2_{\varepsilon^{-1}} + \norm{E^0}^2_{\varepsilon} + \norm{H^0}^2_{\mu} \right] + \\
  \dfrac{2 \Delta t}{1 - \Delta t} \sum\limits_{n = 0}^{N - 1} \left[ \norm{p^{n + \frac{1}{2}}}^2_{\varepsilon^{-1}} +\norm{E^{n + \frac{1}{2}}}^2_{\varepsilon} +  \norm{H^{n + 1}}^2_{\mu} \right] + \dfrac{ \Delta t}{1 - \Delta t} \sum\limits_{n = 0}^{N - 1} \left[ \norm{f_p^n}^2_{\varepsilon^{-1}} + \norm{f_E^n}^2_{\varepsilon^{-1}} + \norm{f_H^{n + \frac{1}{2}}}^2_{\mu^{-1}} \right].
\end{multline*}
In order to apply the discrete Gronwall inequality as in Lemma~\ref{lemma:gronwall_dscrt}, we choose $\delta \coloneq \dfrac{\Delta t}{1 - \Delta t}$, $g_0 \coloneq \dfrac{1 + \Delta t}{1 - \Delta t} \left[\norm{p^0}^2_{\varepsilon^{-1}} + \norm{E^0}^2_{\varepsilon} + \norm{H^0}^2_\mu \right]$, $a_n \coloneq \norm{p^{n+\frac{1}{2}}}^2_{\varepsilon^{-1}} + \norm{E^{n+\frac{1}{2}}}^2_{\varepsilon} + \norm{H^n}^2_\mu$, $b_n \coloneq 0$, $c_n \coloneq \norm{f_p^n}^2_{\varepsilon^{-1}} + \norm{f_E^n}^2_{\varepsilon^{-1}} + \norm{f_H^{n+\frac{1}{2}}}^2_{\mu^{-1}}$, and $\gamma_n \coloneq 2$. Note that for the condition $\gamma_n \delta < 1$ to hold, we need to have $\Delta t < 1/3$ and thus we get $\sigma_n = \left(1 - \dfrac{2 \Delta t}{1 - \Delta t} \right)^{-1}$. Then for $\Delta t < 1/6$, we have that:
\begin{multline*}
  \norm{p^{N - \frac{1}{2}}}^2_{\varepsilon^{-1}} + \norm{E^{N - \frac{1}{2}}}^2_{\varepsilon} + \norm{H^N}^2_{\mu} \le \Big[ \dfrac{\Delta t}{1 - \Delta t} \sum\limits_{n = 0}^{N - 1} \left( \norm{f_p^n}^2_{\varepsilon^{-1}} + \norm{f_E^n}^2_{\varepsilon^{-1}} + \norm{f_H^{n + \frac{1}{2}}}^2_{\mu^{-1}} \right) + \\
  \dfrac{1 + \Delta t}{1 - \Delta t} \left( \norm{p^0}^2_{\varepsilon^{-1}} + \norm{E^0}^2_{\varepsilon} + \norm{H^0}^2_\mu \right) \Big] \exp\left[ \dfrac{\Delta t}{1 - \Delta t} \sum\limits_{n = 0}^{N - 1} 2 \left(1 -  \dfrac{2 \Delta t}{1 - \Delta t}\right)^{-1} \right],
\end{multline*}
\vspace{-1em}
\[ 
  \le \Big[ \dfrac{6 \Delta t}{5} \sum\limits_{n = 0}^{N - 1} \left( \norm{f_p^n}^2_{\varepsilon^{-1}} + \norm{f_E^n}^2_{\varepsilon^{-1}} + \norm{f_H^{n + \frac{1}{2}}}^2_{\mu^{-1}} \right) +
  \dfrac{7}{5} \left( \norm{p^0}^2_{\varepsilon^{-1}} + \norm{E^0}^2_{\varepsilon} + \norm{H^0}^2_\mu \right) \Big] \exp\left( 4 T \right),
\]
where the second inequality is obtained by using that $N \Delta t = T$. We conclude by using a similar argument as in the end of Theorem~\ref{thm:dscrt_enrgy_estmt_cn} by setting $M \coloneq \norm{f_p}^2_{L^2[0, T] \times L^2_{\varepsilon^{-1}}(\Omega)} + \norm{f_E}^2_{L^2[0, T] \times L^2_{\varepsilon^{-1}}(\Omega)} + \norm{f_H}^2_{L^2[0, T] \times L^2_{\mu^{-1}}(\Omega)}$ and using $C = \sqrt{6 \left[ M +\norm{p_0}^2_{\varepsilon^{-1}} + \norm{E_0}^2_{\varepsilon} + \norm{H_0}^2_{\mu} \right] \exp(4T)}$, we obtain our desired result:
\[
  \norm{p^{N - \frac{1}{2}}}_{\varepsilon^{-1}} + \norm{E^{N - \frac{1}{2}}}_{\varepsilon} + \norm{H^N}_{\mu} \le C. \qedhere
\]
\end{proof}

\begin{corollary}[Discrete Energy Conservation]\label{corr:dscrt_enrgy_cnsrvtn_lf}
If the forcing functions in Equation~\eqref{eqn:maxwells_eqns} are all zero, that is, $f_p = 0$ and $f_E = f_H = 0$, then:
\[
  \norm{p^{N - \frac{1}{2}}}^2_{\varepsilon^{-1}} + \norm{E^{N - \frac{1}{2}}}^2_{\varepsilon} + \norm{H^N}^2_{\mu} = \norm{p_0}^2_{\varepsilon^{-1}} + \norm{E_0}^2_{\varepsilon} + \norm{H_0}^2_{\mu}.
\]
\end{corollary}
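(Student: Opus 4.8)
The plan is to specialize the proof of Theorem~\ref{thm:dscrt_enrgy_estmt_lf} to the case $f_p = f_E = f_H = 0$ and to keep every step as an exact identity rather than an inequality. First I would take Equations~\labelcref{eqn:maxwell_p_lf,eqn:maxwell_E_lf,eqn:maxwell_H_lf} with vanishing right hand sides and test with $\widetilde{p} = 2 \Delta t\, \varepsilon^{-1} ( p^{n + \frac{1}{2}} + p^{n - \frac{1}{2}} )$, $\widetilde{E} = 2 \Delta t ( E^{n + \frac{1}{2}} + E^{n - \frac{1}{2}} )$ and $\widetilde{H} = 2 \Delta t ( H^{n + 1} + H^n )$, exactly as in that proof. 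Upon adding the three resulting identities, the two $\nabla$-coupling terms linking the $p$- and $E$-equations cancel identically (treating $\varepsilon$ as a constant so that $\nabla(\varepsilon^{-1} u) = \varepsilon^{-1} \nabla u$), and the two $\nabla \times$-coupling terms linking the $E$- and $H$-equations likewise cancel, while the right hand side vanishes. Applying the polarization-type identity $\aInnerproduct{a - b}{a + b}_{\alpha} = \norm{a}_{\alpha}^2 - \norm{b}_{\alpha}^2$ to each of the three remaining time-difference terms then produces
\[
  \norm{p^{n + \frac{1}{2}}}^2_{\varepsilon^{-1}} + \norm{E^{n + \frac{1}{2}}}^2_{\varepsilon} + \norm{H^{n + 1}}^2_{\mu} = \norm{p^{n - \frac{1}{2}}}^2_{\varepsilon^{-1}} + \norm{E^{n - \frac{1}{2}}}^2_{\varepsilon} + \norm{H^{n}}^2_{\mu},
\]
and telescoping this over $n = 1, \dots, N - 1$ leaves
\[
  \norm{p^{N - \frac{1}{2}}}^2_{\varepsilon^{-1}} + \norm{E^{N - \frac{1}{2}}}^2_{\varepsilon} + \norm{H^{N}}^2_{\mu} = \norm{p^{\frac{1}{2}}}^2_{\varepsilon^{-1}} + \norm{E^{\frac{1}{2}}}^2_{\varepsilon} + \norm{H^{1}}^2_{\mu}.
\]

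Next I would handle the bootstrap step. Since $p^{\frac{1}{2}}$, $E^{\frac{1}{2}}$ and $H^1$ satisfy Equations~\labelcref{eqn:maxwell_p0_lf,eqn:maxwell_E0_lf,eqn:maxwell_H0_lf}, which carry the asymmetric $\tfrac{1}{2}$-weights on the coupling terms and a step size $\Delta t / 2$ rather than $\Delta t$ in the $p$- and $E$-time differences, I would test with $\widetilde{p} = 2 \Delta t\, \varepsilon^{-1} ( p^{\frac{1}{2}} + p_0 )$, $\widetilde{E} = 2 \Delta t ( E^{\frac{1}{2}} + E_0 )$ and $\widetilde{H} = 4 \Delta t ( H^{1} + H_0 )$ — the same choice as in Theorem~\ref{thm:dscrt_enrgy_estmt_lf}, where the extra factor of two in $\widetilde{H}$ offsets the $\Delta t$ versus $\Delta t / 2$ mismatch so that all three time-difference terms acquire the common coefficient $4$, and the chosen weights make the $\nabla$- and $\nabla \times$-coupling pairs cancel on addition. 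With $f_p^0 = f_E^0 = f_H^{\frac{1}{2}} = 0$ this gives the exact identity
\[
  \norm{p^{\frac{1}{2}}}^2_{\varepsilon^{-1}} + \norm{E^{\frac{1}{2}}}^2_{\varepsilon} + \norm{H^{1}}^2_{\mu} = \norm{p_0}^2_{\varepsilon^{-1}} + \norm{E_0}^2_{\varepsilon} + \norm{H_0}^2_{\mu},
\]
and chaining it with the telescoped identity above yields the claimed energy conservation.

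The only genuine obstacle is the bookkeeping in the bootstrap step: one must verify that a single choice of test-function weights simultaneously cancels both coupling pairs and equalizes the three telescoping coefficients in Equations~\labelcref{eqn:maxwell_p0_lf,eqn:maxwell_E0_lf,eqn:maxwell_H0_lf} — which is exactly the reason those starting equations were written with those particular $\tfrac{1}{2}$-factors and halved step sizes. Everything else is a direct specialization of Theorem~\ref{thm:dscrt_enrgy_estmt_lf} with the inequalities promoted to equalities; in particular no Gronwall argument is needed once the forcing is switched off.
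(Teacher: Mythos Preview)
Your argument is correct and is precisely the intended one: the paper states this result as a corollary without a separate proof, and it follows exactly by rerunning the test-function choices from the proof of Theorem~\ref{thm:dscrt_enrgy_estmt_lf} (including the $\widetilde{H} = 4\Delta t(H^1 + H_0)$ weighting in the bootstrap step) with zero forcing, so that every step remains an equality and the telescoping identity drops out.
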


\begin{theorem}[Discrete Error Estimate]\label{thm:dscrt_error_estmt_lf}
For the semidiscretization using the implicit leapfrog scheme as given in Equations~\labelcref{eqn:maxwell_p_lf,eqn:maxwell_E_lf,eqn:maxwell_H_lf}, and ~\labelcref{eqn:maxwell_p0_lf,eqn:maxwell_E0_lf,eqn:maxwell_H0_lf}, for the solution $(p, E, H)$ of Equations~\labelcref{eqn:maxwell_p_wf,eqn:maxwell_E_wf,eqn:maxwell_H_wf} with initial conditions as in Equation~\eqref{eqn:ICs} and assuming sufficient regularity with $p \in C^3[0, T] \times \mathring{H}^1_{\varepsilon^{-1}}(\Omega)$, $E \in C^3[0, T] \times \mathring{H}_{\varepsilon}(\curl; \Omega)$, and $H \in C^3[0, T] \times \mathring{H}_{\mu}(\divgn; \Omega)$, and for the time step $\Delta t$ sufficiently small, there exists a positive bounded constant $C$ independent of $\Delta t$ such that:
\[
  \norm{e_p^{N - \frac{1}{2}}}_{\varepsilon^{-1}} + \norm{e_E^{N - \frac{1}{2}}}_{\varepsilon} + \norm{e_H^N}_{\mu} \le C \left[ \left(\Delta t\right)^2 + \norm{e_p^0}_{\varepsilon^{-1}} + \norm{e_E^0}_{\varepsilon} + \norm{e_H^0}_{\mu} \right],
\]
where $e_p^{n + \frac{1}{2}} \coloneq p(t^{n + \frac{1}{2}}) - p^{n+\frac{1}{2}}$, $e_E^{n + \frac{1}{2}} \coloneq E(t^{n + \frac{1}{2}}) - E^{n + \frac{1}{2}}$ and $e_H^n \coloneq H(t^n) - H^n$ are the errors in the time semidiscretization of $p$, $E$ and $H$, respectively and at the indicated time indices.
\end{theorem}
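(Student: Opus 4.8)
The plan is to follow the argument of Theorem~\ref{thm:dscrt_error_estmt_cn} almost verbatim, with the staggered-grid bookkeeping of Theorem~\ref{thm:dscrt_enrgy_estmt_lf} replacing the uniform-grid one. First I would derive the consistency (truncation) equations: using the integral form of Taylor's theorem, expand $p$ and $E$ about $t^n$ and $H$ about $t^{n+\frac12}$, so that
\[
  \ainnerproduct{\dfrac{p(t^{n+\frac12}) - p(t^{n-\frac12})}{\Delta t}}{\widetilde p} = \ainnerproduct{\dfrac{\partial p}{\partial t}(t^n)}{\widetilde p} + \ainnerproduct{R_p^n}{\widetilde p},
\]
and similarly $\tfrac12\bigl(E(t^{n+\frac12}) + E(t^{n-\frac12})\bigr) = E(t^n) + r_E^n$, with analogous definitions of $R_E^n$, $r_p^n$ about $t^n$ and of $R_H^{n+\frac12}$, $r_E^{n+\frac12}$, $r_H^{n+\frac12}$ about $t^{n+\frac12}$; each of these remainders is an average of integrals of $(\cdot)^2\tfrac{\partial^3}{\partial t^3}$ or $(\cdot)\tfrac{\partial^2}{\partial t^2}$ over subintervals of length $\Delta t/2$. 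Substituting these into the weak equations \eqref{eqn:maxwell_p_wf}--\eqref{eqn:maxwell_H_wf} evaluated at $t^n$ (for the $p$- and $E$-equations) and at $t^{n+\frac12}$ (for the $H$-equation), and using that $f_p^n$, $f_E^n$, $f_H^{n+\frac12}$ are exact pointwise samples and therefore carry no truncation error, produces equations of exactly the form \eqref{eqn:maxwell_p_lf}--\eqref{eqn:maxwell_H_lf} with the exact solution in place of the discrete one and with the corresponding truncation remainders on the right-hand sides, in direct analogy with \eqref{eqn:remainder_p_cn}--\eqref{eqn:remainder_H_cn}.

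Subtracting \eqref{eqn:maxwell_p_lf}--\eqref{eqn:maxwell_H_lf} from these consistency equations gives a homogeneous leapfrog system for the errors $e_p^{n\pm\frac12}$, $e_E^{n\pm\frac12}$, $e_H^n$, $e_H^{n+1}$ driven only by the remainders. I would then test with $\widetilde p = 2\Delta t\,\varepsilon^{-1}\bigl(e_p^{n+\frac12} + e_p^{n-\frac12}\bigr)$, $\widetilde E = 2\Delta t\bigl(e_E^{n+\frac12} + e_E^{n-\frac12}\bigr)$ and $\widetilde H = 2\Delta t\bigl(e_H^{n+1} + e_H^n\bigr)$, add the three equations, and use — just as in Theorems~\ref{thm:dscrt_enrgy_estmt_cn} and~\ref{thm:dscrt_enrgy_estmt_lf} — that the gradient and curl coupling terms cancel in pairs while the symmetric time-difference terms telescope. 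Bounding the remainder contributions with Cauchy--Schwarz, AM--GM, the Triangle and polarization inequalities (picking up fixed powers $\varepsilon^{-1}\mu^{-1}$ on the curl-type remainders, as in Theorem~\ref{thm:dscrt_error_estmt_cn}) gives, for $n = 1,\dots,N-1$,
\[
  2\Bigl[\norm{e_p^{n+\frac12}}^2_{\varepsilon^{-1}} - \norm{e_p^{n-\frac12}}^2_{\varepsilon^{-1}} + \norm{e_E^{n+\frac12}}^2_{\varepsilon} - \norm{e_E^{n-\frac12}}^2_{\varepsilon} + \norm{e_H^{n+1}}^2_{\mu} - \norm{e_H^n}^2_{\mu}\Bigr] \le \Delta t\,(\text{error norms}) + \Delta t\,(\text{remainder norms}).
\]
Separately, I would run the identical computation on the bootstrap equations \eqref{eqn:maxwell_p0_lf}--\eqref{eqn:maxwell_H0_lf}, expanding the exact solution about $t=0$ over a half step for $p$ and $E$ and a full step for $H$, and testing with $\widetilde p = 2\Delta t\,\varepsilon^{-1}\bigl(e_p^{\frac12} + e_p^0\bigr)$, $\widetilde E = 2\Delta t\bigl(e_E^{\frac12} + e_E^0\bigr)$, $\widetilde H = 4\Delta t\bigl(e_H^1 + e_H^0\bigr)$; the asymmetric $\tfrac12$-weights in \eqref{eqn:maxwell_p0_lf}--\eqref{eqn:maxwell_H0_lf} together with the $4\Delta t$ scaling of $\widetilde H$ are precisely what make the coupling terms cancel there and make the left side telescope against the $n=1$ term above.

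Summing the displayed inequalities over $n = 1,\dots,N-1$, adding the bootstrap inequality, and discarding the nonnegative residual error norms telescopes the left side down to $\norm{e_p^{N-\frac12}}^2_{\varepsilon^{-1}} + \norm{e_E^{N-\frac12}}^2_{\varepsilon} + \norm{e_H^N}^2_{\mu}$ up to the initial-error contribution. I would then apply the discrete Gronwall inequality, Lemma~\ref{lemma:gronwall_dscrt}, with $a_n \coloneq \norm{e_p^{n+\frac12}}^2_{\varepsilon^{-1}} + \norm{e_E^{n+\frac12}}^2_{\varepsilon} + \norm{e_H^n}^2_{\mu}$, $b_n \coloneq 0$, $c_n$ the remainder sum, $\gamma_n \coloneq 2$, $\delta \coloneq \Delta t/(1-\Delta t)$ and $g_0$ proportional to $\norm{e_p^0}^2_{\varepsilon^{-1}} + \norm{e_E^0}^2_{\varepsilon} + \norm{e_H^0}^2_{\mu}$ — the same choices as in Theorem~\ref{thm:dscrt_enrgy_estmt_lf}, valid for $\Delta t < 1/6$. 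Finally, exactly as in \eqref{eqn:Remainder_norm_p_cn}--\eqref{eqn:Remainder_norms_pEH_cn}, Cauchy--Schwarz on the integral remainders gives $\Delta t\sum_n c_n = O\bigl((\Delta t)^4\bigr)$ under the assumed $C^3$ regularity, so that $\norm{e_p^{N-\frac12}}^2_{\varepsilon^{-1}} + \norm{e_E^{N-\frac12}}^2_{\varepsilon} + \norm{e_H^N}^2_{\mu} \le \widetilde C\bigl[(\Delta t)^4 + \norm{e_p^0}^2_{\varepsilon^{-1}} + \norm{e_E^0}^2_{\varepsilon} + \norm{e_H^0}^2_{\mu}\bigr]$; taking square roots together with the equivalence of the $1$- and $2$-norms then gives the claim with $C = \sqrt{3\widetilde C}$.

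The step I expect to be the main obstacle is controlling the consistency error of the curl coupling, which is interlocked with the bootstrap analysis. In \eqref{eqn:maxwell_E_lf} the magnetic term $\tfrac12(H^{n+1}+H^n)$ is naturally centred at $t^{n+\frac12}$ while the remaining terms of that equation sit at $t^n$ (and symmetrically for the electric term in \eqref{eqn:maxwell_H_lf}), so term by term that piece is only first-order consistent; recovering the quadratic rate requires that, after the energy test and summation over $n$, these half-step discrepancies either cancel against their counterpart in the companion equation or are reorganized — via a summation by parts in $n$ — into genuine $O\bigl((\Delta t)^2\bigr)$ remainder contributions. One must also verify that the bootstrap equations \eqref{eqn:maxwell_p0_lf}--\eqref{eqn:maxwell_H0_lf}, with their particular weights, contribute a consistency error of the same order, so that the starting error fed into the Gronwall step is itself $O\bigl((\Delta t)^4\bigr)$ in the squared energy norm when the initial data are reproduced exactly. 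Once these points are settled, the remainder of the argument — lining up the telescoping indices across the two staggered grids and checking that no uncontrolled end term in $n$ survives — is routine.
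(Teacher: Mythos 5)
Your outline reproduces the paper's own route for this theorem essentially verbatim (the same Taylor consistency equations, the same test functions $2\Delta t\,\varepsilon^{-1}(e_p^{n+\frac{1}{2}}+e_p^{n-\frac{1}{2}})$, $2\Delta t(e_E^{n+\frac{1}{2}}+e_E^{n-\frac{1}{2}})$, $2\Delta t(e_H^{n+1}+e_H^{n})$, and the same Gronwall bookkeeping as in Theorem~\ref{thm:dscrt_enrgy_estmt_lf}), so as a roadmap it is faithful. The difficulty is that the step you yourself single out as ``the main obstacle'' is not a deferrable loose end: it is the crux, your proposal supplies no mechanism to settle it, and the plain energy pipeline you describe does not survive it. In \eqref{eqn:maxwell_E_lf} the term $\frac{1}{2}(H^{n+1}+H^{n})$ is centred at $t^{n+\frac{1}{2}}$ while the exact equation being expanded is posed at $t^{n}$ (symmetrically for $\frac{1}{2}(E^{n+\frac{1}{2}}+E^{n-\frac{1}{2}})$ in \eqref{eqn:maxwell_H_lf}); the identity $H(t^{n+\frac{1}{2}})=\frac{1}{2}\bigl(H(t^{n+1})+H(t^{n})\bigr)-r_H^{n}$ removes only an $O((\Delta t)^2)$ piece and leaves the uncompensated defect $H(t^{n+\frac{1}{2}})-H(t^{n})=O(\Delta t)$ in the divided-by-$\Delta t$ error equation. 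Fed through Cauchy--Schwarz, AM--GM and discrete Gronwall exactly as you propose, that defect contributes $\Delta t\sum_n O((\Delta t)^2)=O((\Delta t)^2)$ to the squared energy, hence only $O(\Delta t)$ to the error; so unless the advertised cancellation or summation by parts in $n$ is actually exhibited, the plan proves first order, not second. The same concern applies to the bootstrap: with the weights actually written in \labelcref{eqn:maxwell_p0_lf,eqn:maxwell_E0_lf,eqn:maxwell_H0_lf}, the coupling terms carry half the weight that consistency at $t^{\frac{1}{4}}$ (respectively $t^{\frac{1}{2}}$) demands, so the very first step already commits an $O(\Delta t)$ error and the starting datum you feed into Gronwall is not $O((\Delta t)^4)$ in the squared norm.

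There is moreover a structural obstruction showing that no reorganization of remainders can rescue the argument while keeping the error definitions of the statement. With zero forcing and exact starting data, Corollary~\ref{corr:dscrt_enrgy_cnsrvtn_lf} makes the staggered discrete energy $\norm{p^{N-\frac{1}{2}}}^2_{\varepsilon^{-1}}+\norm{E^{N-\frac{1}{2}}}^2_{\varepsilon}+\norm{H^N}^2_{\mu}$ exactly equal to the initial energy, whereas by Corollary~\ref{corr:smth_enrgy_cnsrvtn} the exact staggered combination $\norm{p(t^{N-\frac{1}{2}})}^2_{\varepsilon^{-1}}+\norm{E(t^{N-\frac{1}{2}})}^2_{\varepsilon}+\norm{H(t^N)}^2_{\mu}$ differs from the conserved energy by $\norm{H(t^N)}^2_{\mu}-\norm{H(t^{N-\frac{1}{2}})}^2_{\mu}$, which for generic solutions (Example 1, for instance) is of size $\Delta t$ and not smaller. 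Boundedness of both solutions then forces $\norm{e_p^{N-\frac{1}{2}}}_{\varepsilon^{-1}}+\norm{e_E^{N-\frac{1}{2}}}_{\varepsilon}+\norm{e_H^N}_{\mu}\ge c\,\Delta t$ at generic times, so a uniform $O((\Delta t)^2)$ bound for these particular staggered errors cannot be reached; a correct second-order statement has to compare $H^N$ (or $p^{N-\frac{1}{2}},E^{N-\frac{1}{2}}$) with the exact solution at suitably shifted times, or work with a modified energy. For comparison, the paper's proof follows exactly the path you sketch and simply never introduces the off-centred terms (its consistency equations pair $H(t^{n+\frac{1}{2}})$ with an equation whose other members sit at $t^{n}$), so it does not close this gap either: you have correctly located the weak point, but locating it is not the same as settling it, and as written your proposal does not yield the claimed estimate.
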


\begin{proof}
Using the Taylor remainder theorem, and expressing $p(t)$ about $t = t^n$, we have that:
\[
  p(t) = p(t^n) + \dfrac{\partial p}{\partial t}(t^n)(t - t^n) + \dfrac{\partial^2 p}{\partial t^2}(t^n) \dfrac{(t - t^n)^2}{2} + \int\limits_{t^n}^{t} \dfrac{(t - s)^2}{2} \dfrac{\partial^3 p}{\partial t^3}(s) ds,
\]
which when evaluated at $t = t^{n + \frac{1}{2}}$ and $t = t^{n - \frac{1}{2}}$ yields:
\begin{align*}
  p(t^{n + \frac{1}{2}}) &= p(t^n) + \dfrac{\partial p}{\partial t}(t^n) (t^{n + \frac{1}{2}} - t^n) + \dfrac{\partial^2 p}{\partial t^2}(t^n) \dfrac{(t^{n + \frac{1}{2}} - t^n)^2}{2} + \int\limits_{t^n}^{\mathclap{t^{n + \frac{1}{2}}}} \dfrac{(t^{n + \frac{1}{2}} - s)^2}{2} \dfrac{\partial^3 p}{\partial t^3}(s) ds, \\
p(t^{n - \frac{1}{2}}) &= p(t^n) + \dfrac{\partial p}{\partial t}(t^n)(t^{n - \frac{1}{2}} - t^n) + \dfrac{\partial^2 p}{\partial t^2}(t^n) \dfrac{(t^{n - \frac{1}{2}} - t^n)^2}{2} + \int\limits_{t^n}^{\mathclap{t^{n - \frac{1}{2}}}} \dfrac{(t^{n - \frac{1}{2}} - s)^2}{2} \dfrac{\partial^3 p}{\partial t^3}(s) ds.
\end{align*}
Subtracting these two equations, and using the result in the inner product term from the semidiscretization of the variational formulation leads to:
\[
  \ainnerproduct{\dfrac{p(t^{n + \frac{1}{2}}) - p(t^{n - \frac{1}{2}})}{\Delta t}}{\widetilde{p}} = \ainnerproduct{\dfrac{\partial p}{\partial t}(t^n)}{\widetilde{p}} + \ainnerproduct{R^n_p}{\widetilde{p}},
\]
in which we have defined that:
\[
  R^n_p \coloneq \dfrac{1}{\Delta t} \left[\int\limits_{t^{n - \frac{1}{2}}}^{t^n} \dfrac{(t^{n - \frac{1}{2}} - s)^2}{2} \dfrac{\partial^3 p}{\partial t^3}(s) ds + \int\limits_{t^n}^{t^{n + \frac{1}{2}}} \dfrac{(t^{n + \frac{1}{2}} - s)^2}{2} \dfrac{\partial^3 p}{\partial t^3}(s) ds \right].
\]
Similarly, for $E$ and $H$, we have the following:
\begin{align*}
  \ainnerproduct{\varepsilon \dfrac{E(t^{n + \frac{1}{2}}) - E(t^{n - \frac{1}{2}})}{\Delta t}}{\widetilde{E}} &= \ainnerproduct{\varepsilon \dfrac{\partial E}{\partial t}(t^n)}{\widetilde{E}} + \ainnerproduct{\varepsilon R^n_E}{\widetilde{E}}, \\
  \ainnerproduct{\mu \dfrac{H(t^{n + 1}) - H(t^n)}{\Delta t}}{\widetilde{H}} &= \ainnerproduct{\mu \dfrac{\partial H}{\partial t}(t^{n + \frac{1}{2}})}{\widetilde{H}} + \ainnerproduct{\mu R^{n + \frac{1}{2}}_H}{\widetilde{H}},
\end{align*}
and in each of which we have defined that:
\begin{align*}
  R^n_E &\coloneq \dfrac{1}{\Delta t} \left[ \int\limits_{t^{n - \frac{1}{2}}}^{t^n} \dfrac{(t^{n - \frac{1}{2}} - s)^2}{2} \dfrac{\partial^3 E}{\partial t^3}(s) ds + \int\limits_{t^n}^{t^{n + \frac{1}{2}}} \dfrac{(t^{n + \frac{1}{2}} - s)^2}{2} \dfrac{\partial^3 E}{\partial t^3}(s) ds \right], \\
R^{n + \frac{1}{2}}_H &\coloneq \dfrac{1}{\Delta t} \left[ \int\limits_{t^n}^{t^{n + \frac{1}{2}}} \dfrac{(t^n - s)^2}{2} \dfrac{\partial^3 H}{\partial t^3}(s) ds + \int\limits_{t^{n + \frac{1}{2}}}^{t^{n + 1}} \dfrac{(t^{n + 1} - s)^2}{2} \dfrac{\partial^3 H}{\partial t^3}(s) ds\right].
\end{align*}
Using these terms in the weak formulation as in Equations~\labelcref{eqn:maxwell_p_wf,eqn:maxwell_E_wf,eqn:maxwell_H_wf}  at time $t = t^n$ for $p$ and $E$ terms, and at time $t = t^{n + \frac{1}{2}}$ for $H$ and with the definition that:
\[
  f(t^n) \coloneq \dfrac{u(t^{n + \frac{1}{2}}) + u(t^{n - \frac{1}{2}})}{2}, \quad f_H(t^{n + \frac{1}{2}}) \coloneq \dfrac{f_H(t^{n+1}) + f_H(t^n)}{2}
\] 
where $f$ is either of $f_p$ or $f_E$, and using the Taylor remainder theorem again with:
\[
  u(t^n) \coloneq \dfrac{u(t^{n + \frac{1}{2}}) + u(t^{n - \frac{1}{2}})}{2} - r_u^n, \quad H(t^{n + \frac{1}{2}}) \coloneq \dfrac{H(t^{n+1}) + H(t^n)}{2} - r_H^n
\] 
where $u$ is either of $p$ or $E$, and in which we have defined that:
\[
r_u^n \coloneqq \dfrac{1}{2} \left[\int\limits_{\mathclap{t^n}}^{t^{n + \frac{1}{2}}} (t^{n + \frac{1}{2}} - s) \dfrac{\partial^2 u}{\partial t^2}(s) ds - \int\limits_{t^{n - \frac{1}{2}}}^{t^n} (t^{n - \frac{1}{2}} - s) \dfrac{\partial^2 u}{\partial t^2}(s) ds\right],
\]
\[
r_H^n \coloneqq \dfrac{1}{2} \left[\,\, \int\limits_{\mathclap{t^{n + \frac{1}{2}}}}^{t^{n+1}} (t^{n+1} - s) \dfrac{\partial^2 H}{\partial t^2}(s) ds - \int\limits_{t^n}^{t^{n + \frac{1}{2}}} (t^n - s) \dfrac{\partial^2 H}{\partial t^2}(s) ds\right],
\]
and then subtracting the implicit leapfrog semidiscretization as in Equations~\labelcref{eqn:maxwell_p_lf,eqn:maxwell_E_lf,eqn:maxwell_H_lf} leads us to the following set of equations:
\begin{align*}
\aInnerproduct{\dfrac{e_p^{n + \frac{1}{2}} - e_p^{n - \frac{1}{2}}}{\Delta t}}{\widetilde{p}} - \dfrac{1}{2} \aInnerproduct{ \varepsilon \left(e_E^{n + \frac{1}{2}} + e_E^{n - \frac{1}{2}} \right)}{\nabla \widetilde{p}} &= \aInnerproduct{R_p^{n} - \varepsilon r_E^n}{\widetilde{p}}, \\
 \dfrac{1}{2} \aInnerproduct{\nabla \left(e_p^{n + \frac{1}{2}} + e_p^{n - \frac{1}{2}}\right)}{\widetilde{E}} + \aInnerproduct{\varepsilon \dfrac{e_E^{n + \frac{1}{2}} - e_E^{n - \frac{1}{2}}}{\Delta t}}{\widetilde{E}} - \dfrac{1}{2} \aInnerproduct{e_H^{n + 1} + e_H^n}{\nabla \times \widetilde{E}} &= \aInnerproduct{\varepsilon R_E^n + \nabla r_p^n - r_H^{n+\frac{1}{2}}}{\widetilde{E}}, \\
\aInnerproduct{\mu \dfrac{e_H^{n + 1} - e_H^{n}}{\Delta t}}{\widetilde{H}} +  \dfrac{1}{2}\aInnerproduct{\nabla \times\left(e_E^{n+\frac{1}{2}} + e_E^{n - \frac{1}{2}} \right)}{\widetilde{H}} &= \aInnerproduct{\mu R_H^{n + \frac{1}{2}} + \nabla \times r_E^n}{\widetilde{H}}.
\end{align*}
Likewise, for the semidiscrete approximation of the initial system as in Equations~\labelcref{eqn:maxwell_p0_lf,eqn:maxwell_E0_lf,eqn:maxwell_H0_lf}, we obtain for their errors the following system of equations:
\begin{align*}
  \aInnerproduct{\dfrac{e_p^{\frac{1}{2}} - e_p^0}{\Delta t/2}}{\widetilde{p}} - \dfrac{1}{4} \aInnerproduct{ \varepsilon \left(e_E^{\frac{1}{2}} + e_E^0 \right)}{\nabla \widetilde{p}} &= \aInnerproduct{\dfrac{1}{2} R_p^0 -   \dfrac{\varepsilon}{2} r_E^0}{\widetilde{p}}, \\
  \dfrac{1}{4} \aInnerproduct{\nabla \left(e_p^{\frac{1}{2}} + e_p^0 \right)}{\widetilde{E}} + \aInnerproduct{\varepsilon \dfrac{e_E^{\frac{1}{2}} - e_E^0}{\Delta t/2}}{\widetilde{E}} - \dfrac{1}{2} \aInnerproduct{e_H^1 + e_H^0}{\nabla \times \widetilde{E}} &= \aInnerproduct{\dfrac{\varepsilon}{2} R_E^0 + \dfrac{1}{2} \nabla r_p^0 - r_H^{\frac{1}{2}}}{\widetilde{E}}, \\
  \aInnerproduct{\mu \dfrac{e_H^{1} - e_H^0}{\Delta t}}{\widetilde{H}} + \dfrac{1}{4} \aInnerproduct{\nabla \times \left( e_E^{\frac{1}{2}} + e_E^0 \right)}{\widetilde{H}} &= \aInnerproduct{\mu R_H^{\frac{1}{2}}+ \dfrac{1}{2} \nabla \times r_E^0}{\widetilde{H}},
\end{align*}
and we define the initial remainder terms as follows:
\begin{align*}
R^0_u & \coloneq \dfrac{1}{\Delta t} \left[ \int\limits_{t^0}^{t^\frac{1}{4}} \dfrac{(t^0 - s)^2}{2} \dfrac{\partial^3 u}{\partial t^3}(s) ds + \int\limits_{t^\frac{1}{4}}^{t^{\frac{1}{2}}} \dfrac{(t^{\frac{1}{2}} - s)^2}{2} \dfrac{\partial^3 u}{\partial t^3}(s) ds \right], \\
R^{ \frac{1}{2}}_H & \coloneq \dfrac{1}{\Delta t} \left[ \int\limits_{t^0}^{t^{ \frac{1}{2}}} \dfrac{(t^0 - s)^2}{2} \dfrac{\partial^3 H}{\partial t^3}(s) ds + \int\limits_{t^{\frac{1}{2}}}^{t^{1}} \dfrac{(t^{1} - s)^2}{2} \dfrac{\partial^3 H}{\partial t^3}(s) ds\right], \\
r_u^0 & \coloneqq \dfrac{1}{2} \left[\int\limits_{\mathclap{t^\frac{1}{4}}}^{t^{ \frac{1}{2}}} (t^{ \frac{1}{2}} - s) \dfrac{\partial^2 u}{\partial t^2}(s) ds - \int\limits_{t^0}^{t^\frac{1}{4}} (t^0 - s) \dfrac{\partial^2 u}{\partial t^2}(s) ds\right], \\
r_H^{ \frac{1}{2}} & \coloneqq \dfrac{1}{2} \left[\,\, \int\limits_{\mathclap{t^{\frac{1}{2}}}}^{t^{1}} (t^{1} - s) \dfrac{\partial^2 H}{\partial t^2}(s) ds - \int\limits_{t^0}^{t^{\frac{1}{2}}} (t^0 - s) \dfrac{\partial^2 H}{\partial t^2}(s) ds\right].
\end{align*}
Now by choosing appropriate test functions for these two systems of equations as in Theorem~\ref{thm:dscrt_error_estmt_cn} and by repeating the arguments for summing over $n = 1$ to $N - 1$, we get: 
\begin{multline*}
  \norm{e_p^{N - \frac{1}{2}}}^2_{\varepsilon^{-1}} + \norm{e_E^{N - \frac{1}{2}}}^2_{\varepsilon} + \norm{e_H^N}^2_{\mu} \le \dfrac{1 + \Delta t}{1 - \Delta t} \left[ \norm{e_p^0}^2_{\varepsilon^{-1}} + \norm{e_E^0}^2_{\varepsilon} + \norm{e_H^0}^2_{\mu} \right] + \\
  \dfrac{\Delta t}{1 - \Delta t} \sum\limits_{n = 0}^{N - 1} \Big[ 2 \left( \norm{e_p^{n + \frac{1}{2}}}^2_{\varepsilon^{-1}} + \norm{e_E^{n + \frac{1}{2}}}^2_{\varepsilon} + \norm{e_H^{n + 1}}^2_{\mu} \right) + \left( \norm{R_p^n}^2_{\varepsilon^{-1}} + \norm{R_E^n}^2_{\varepsilon} + \norm{R_H^{n + \frac{1}{2}}}^2_{\mu} + \right. \\ 
\left. \norm{\nabla r_p^n} ^2_{\varepsilon^{-1}} + \norm{r_E^n}^2_{\varepsilon} + \varepsilon^{-1} \mu^{-1} \norm{\nabla \times r_E^n}^2_{\varepsilon} +  \varepsilon^{-1} \mu^{-1} \norm{r_H^{n + \frac{1}{2}}}^2_{\mu} \right) \Big].
\end{multline*}
Applying the discrete Gronwall inequality similar to Theorem~\ref{thm:dscrt_enrgy_estmt_lf}, we obtain the estimate:
\begin{multline*}
  \norm{e_p^{N - \frac{1}{2}}}^2_{\varepsilon^{-1}} + \norm{e_E^{N - \frac{1}{2}}}^2_{\varepsilon} +\norm{e_H^N}^2_{\mu} \le \Bigg[ \dfrac{6 \Delta t}{5} \sum\limits_{n = 0}^{N - 1} \left(\norm{R_p^n}^2_{\varepsilon^{-1}} + \norm{R_E^n}^2_{\varepsilon} + \norm{R_H^{n + \frac{1}{2}}}^2_{\mu} + + \norm{\nabla r_p^n} ^2_{\varepsilon^{-1}} + \norm{r_E^n}^2_{\varepsilon} + \right. \\ 
\left. \varepsilon^{-1} \mu^{-1} \norm{\nabla \times r_E^n}^2_{\varepsilon} +  \varepsilon^{-1} \mu^{-1} \norm{r_H^{n + \frac{1}{2}}}^2_{\mu} \right) +
  \dfrac{7}{5} \left(\norm{e_p^0}^2_{\varepsilon^{-1}} + \norm{e_E^0}^2_{\varepsilon}  + \norm{e_H^0}^2_\mu \right)\Bigg] \exp\left( 4 T \right).
\end{multline*}
Now, for the Taylor remainder terms on the right hand side of this equation, using arguments similar to that in Theorem~\ref{thm:dscrt_error_estmt_cn}, we can obtain the following inequalities:
\begin{gather*}
\begin{alignat*}{3}
\sum\limits_{n = 0}^{N - 1} \norm{R^n_p}^2_{\varepsilon^{-1}} &\le \dfrac{ \left( \Delta t \right)^3}{20} \norm[\bigg]{\dfrac{\partial^3 p}{\partial t^3}}^2_{L^2[0, T] \times L^2_{\varepsilon^{-1}}(\Omega)}, & \sum\limits_{n = 0}^{N - 1} \norm{R^n_E}^2_{\varepsilon} & \le \dfrac{\left( \Delta t \right)^3}{20} \norm[\bigg]{\dfrac{\partial^3 E}{\partial t^3}}^2_{L^2[0, T] \times L^2_\varepsilon(\Omega)}, \\
  \sum\limits_{n = 0}^{N - 1} \norm{R^{n + \frac{1}{2}}_H}^2_\mu &\le \dfrac{\left( \Delta t \right)^3}{20} \norm[\bigg]{\dfrac{\partial^3 H}{\partial t^3}}^2_{L^2[0, T] \times L^2_\mu(\Omega)}, & \sum\limits_{n = 0}^{N - 1} \norm{\nabla r^n_p}^2_{\varepsilon^{-1}} &\le  \dfrac{\left( \Delta t \right)^3}{48} \norm[\bigg]{\dfrac{\partial^2 \left(\nabla p \right)}{\partial t^2}}^2_{L^2[0, T] \times \mathring{H}^1_{\varepsilon^{-1}}(\Omega)}, \\
\sum\limits_{n = 0}^{N - 1} \norm{r^n_E}^2_{\varepsilon} &\le \dfrac{\left( \Delta t \right)^3}{48} \norm[\bigg]{\dfrac{\partial^2 E}{\partial t^2}}^2_{L^2[0, T] \times L^2_\varepsilon(\Omega)}, & \sum\limits_{n = 0}^{N - 1} \norm{\nabla \times r^n_E}^2_{\varepsilon} &\le \dfrac{\left( \Delta t \right)^3}{48} \norm[\bigg]{\dfrac{\partial^2 (\nabla \times E)}{\partial t^2}}^2_{L^2[0, T] \times \mathring{H}_\varepsilon(\curl; \Omega)},
\end{alignat*}\\
\sum\limits_{n = 0}^{N - 1} \norm{r^n_H}^2_\mu \le \dfrac{\left( \Delta t \right)^3}{48} \norm[\bigg]{\dfrac{\partial^2 H}{\partial t^2}}^2_{L^2[0, T] \times L^2_\mu(\Omega)}.
\end{gather*}
As a result, we can finally obtain:
\begin{multline*}
\norm{e_p^{N - \frac{1}{2}}}^2_{\varepsilon^{-1}} + \norm{e_E^{N - \frac{1}{2}}}^2_{\varepsilon} + \norm{e_H^N}^2_{\mu} \le \\
\Bigg[ \dfrac{3}{50} \left( \Delta t \right)^4 \left( \norm[\bigg]{\dfrac{\partial^3 p}{\partial t^3}}^2_{L^2[0, T] \times L^2_{\varepsilon^{-1}}(\Omega)} + \norm[\bigg]{\dfrac{\partial^3 E}{\partial t^3}}^2_{L^2[0, T] \times L^2_\varepsilon(\Omega)} + \norm[\bigg]{\dfrac{\partial^3 H}{\partial t^3}}^2_{L^2[0, T] \times L^2_\mu(\Omega)} \right) + \\
\dfrac{1}{40} \left( \Delta t \right)^4 \left(\norm[\bigg]{\dfrac{\partial^2 \left(\nabla p \right)}{\partial t^2}}^2_{L^2[0, T] \times \mathring{H}^1_{\varepsilon^{-1}}(\Omega)} + \norm[\bigg]{\dfrac{\partial^2 E}{\partial t^2}}^2_{L^2[0, T] \times L^2_\varepsilon(\Omega)} + \varepsilon^{-1} \mu^{-1} \norm[\bigg]{\dfrac{\partial^2 (\nabla \times E)}{\partial t^2}}^2_{L^2[0, T] \times \mathring{H}_\varepsilon(\curl; \Omega)} + \right. \\
\left. \varepsilon^{-1} \mu^{-1} \norm[\bigg]{\dfrac{\partial^2 H}{\partial t^2}}^2_{L^2[0, T] \times L^2_\mu(\Omega)} \right)  + \dfrac{7}{5} (\norm{e_p^0}^2_{\varepsilon^{-1}} + \norm{e_E^0}^2_{\varepsilon} + \norm{e_H^0}^2_{\mu})\Bigg] \exp \left( 4 T \right).
\end{multline*}
Finally, using the regularity assumptions for $p$, $E$ and $H$, and $1$- and $2$-norm equivalence, we obtain the required result:
\[
  \norm{e_p^{N - \frac{1}{2}}}_{\varepsilon^{-1}} + \norm{e_E^{N - \frac{1}{2}}}_{\varepsilon} + \norm{e_H^N}_{\mu} \le C \left[ (\Delta t)^2 + \norm{e_p^0}_{\varepsilon^{-1}} + \norm{e_E^0}_{\varepsilon} + \norm{e_H^0}_{\mu} \right]. \qedhere
\]
\end{proof}

\section{Error Estimates for Full Discretization} \label{sec:fllerrrestmts}

We now present the error analysis for the full discretization of the Maxwell's equations using finite elements and with our two time integration methods.

\subsection{Crank-Nicholson Scheme}\label{sec:cn_full_error}

For the Crank-Nicholson scheme as in Equations~\labelcref{eqn:maxwell_p_cn,eqn:maxwell_E_cn,eqn:maxwell_H_cn}, using a de Rham sequence of finite dimensional subspaces of the corresponding function spaces for the spatial discretization of $(p^n, E^n, H^n)$, we obtain the following discrete problem: find $(p^n_h, E^n_h, H^n_h) \in U_h \times V_h \times W_h \subseteq \mathring{H}^1_{\varepsilon^{-1}}(\Omega) \times \mathring{H}_{\varepsilon}(\curl; \Omega) \times \mathring{H}_{\mu}(\divgn; \Omega)$ such that:
\begin{subequations}
  \begin{equation}
    \aInnerproduct{\dfrac{p_h^n - p_h^{n - 1}}{\Delta t}}{\widetilde{p}} - \aInnerproduct{\dfrac{\varepsilon}{2} \left(  E_h^n +  E_h^{n - 1} \right)}{\nabla \widetilde{p}} = \aInnerproduct{\dfrac{1}{2} \left( f_p^n + f_p^{n - 1} \right)}{\widetilde{p}}, \label{eqn:maxwell_p_cn_full}
  \end{equation}
  \begin{multline}
    \aInnerproduct{\dfrac{1}{2} \nabla \left(  p_h^n +  p_h^{n - 1} \right)}{\widetilde{E}} + \aInnerproduct{\varepsilon \dfrac{E_h^n - E_h^{n - 1}}{\Delta t}}{\widetilde{E}} - \aInnerproduct{\dfrac{1}{2} \left( H_h^n + H_h^{n - 1} \right)}{\nabla \times \widetilde{E}} = 
    \aInnerproduct{\dfrac{1}{2} \left(f_E^n + f_E^{n - 1} \right)}{\widetilde{E}}, \label{eqn:maxwell_E_cn_full}
  \end{multline}
  \begin{equation}
    \aInnerproduct{\dfrac{1}{2}  \nabla \times  \left( E_h^n + E_h^{n - 1} \right)}{\widetilde{H}} + \aInnerproduct{\mu \dfrac{H_h^n - H_h^{n - 1}}{\Delta t}}{\widetilde{H}} = \aInnerproduct{\dfrac{1}{2} \left( f_H^n + f_H^{n - 1} \right)}{\widetilde{H}}. \label{eqn:maxwell_H_cn_full}
  \end{equation}
\end{subequations}
for all $(\widetilde{p}, \widetilde{E}, \widetilde{H}) \in U_h \times V_h \times W_h$, and for $n = 1, \dots, N$ given $(p^0_h, E^0_h, H^0_h) \in U_h \times V_h \times W_h$. 

Let $\Pi_h^0: \mathring{H}^1_{\varepsilon^{-1}}(\Omega) \longto U_h$, $\Pi_h^1: \mathring{H}_{\varepsilon}(\curl; \Omega) \longto V_h$ and $\Pi_h^2: \mathring{H}_{\mu}(\divgn; \Omega) \longto W_h$ denote the respective smoothed $L^2$ projection operators as in the sense of \cite{Schoberl2008,Christiansen2007} and a detailed discussion is available in \cite{ArFaWi2006}. Now, we define the error for $p$, $E$ and $H$ at time $n \Delta t$ under the full discretization as:
\begin{alignat}{2}
  e_{p_h}^n &\coloneq p(t^n) - p_h^n &&= \eta^n - \eta_h^n, \label{eqn:p_fullerror_cn} \\
  e_{E_h}^n &\coloneq E(t^n) - E_h^n &&= \zeta^n - \zeta_h^n, \label{eqn:E_fullerror_cn} \\
  e_{H_h}^n &\coloneq H(t^n) - H_h^n &&= \xi^n - \xi_h^n, \label{eqn:H_fullerror_cn}
\end{alignat}
and in which we have the following definitions for the newly introduced terms:
\begin{alignat}{3}
  \eta^n &\coloneq p(t^n) - \Pi_h^0 p(t^n), &&\quad \eta_h^n &&\coloneq p_h^n  - \Pi_h^0 p(t^n), \label{eqn:p_fullerror_sub_cn} \\
  \zeta^n &\coloneq E(t^n) - \Pi_h^1 E(t^n), &&\quad \zeta_h^n &&\coloneq E_h^n - \Pi_h^1 E(t^n), \label{eqn:E_fullerror_sub_cn} \\
  \xi^n &\coloneq H(t^n) - \Pi_h^2 H(t^n), &&\quad \xi_h^n &&\coloneq H_h^n - \Pi_h^2 H(t^n). \label{eqn:H_fullerror_sub_cn}
\end{alignat}
With this, we can now state and prove our theorem for convergence of errors in the full discretization of the system of Maxwell's equations using the Crank-Nicholson scheme.

\begin{theorem}[Full Error Estimate] \label{thm:full_error_estimate_cn}
Let $p \in C^3[0, T] \times \mathring{H}^1_{\varepsilon^{-1}}(\Omega)$, $E \in C^3[0, T] \times \mathring{H}_{\varepsilon}(\curl; \Omega)$, and $H \in C^3[0, T] \times \mathring{H}_{\mu}(\divgn; \Omega)$ be the solution to the variational formulation of the Maxwell's equations as in Equations~\labelcref{eqn:maxwell_p_wf,eqn:maxwell_E_wf,eqn:maxwell_H_wf}, and let $(p_h^n, E_h^n, H_h^n)$ be the solution of the fully discretized Maxwell's equations using the Crank-Nicholson scheme as in Equations~\labelcref{eqn:maxwell_p_cn_full,eqn:maxwell_E_cn_full,eqn:maxwell_H_cn_full}. If the time step $\Delta t > 0$ and mesh parameter $h > 0$ are sufficiently small, then there exists a positive bounded constant $C$ independent of both $\Delta t$ and $h$ such that the following error estimate holds:
\[
  \norm{e_{p_h}^N}_{\varepsilon^{-1}} + \norm{e_{E_h}^{N}}_{\varepsilon} + \norm{e_{H_h}^{N}}_{\mu} \le C \left[ (\Delta t)^2 + h^r + h^r (\Delta t)^2 \right],
\]
where the finite element subspaces $U_h$, $V_h$ and $W_h$ are each spanned by their respective Whitney form basis of polynomial order $r \ge 1$.
\end{theorem}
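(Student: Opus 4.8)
The plan is to run the Galerkin error-splitting argument on top of the time-discretization analysis of Theorem~\ref{thm:dscrt_error_estmt_cn}. Using the decompositions in Equations~\labelcref{eqn:p_fullerror_cn,eqn:E_fullerror_cn,eqn:H_fullerror_cn}, it suffices to bound the finite element components $\eta_h^n \in U_h$, $\zeta_h^n \in V_h$, $\xi_h^n \in W_h$, since the projection-error components $\eta^n,\zeta^n,\xi^n$ from Equations~\labelcref{eqn:p_fullerror_sub_cn,eqn:E_fullerror_sub_cn,eqn:H_fullerror_sub_cn} are controlled directly by the approximability of the de Rham finite element subspaces: $\norm{\eta^n}_{\varepsilon^{-1}} \le C h^r \norm{p(t^n)}_{H^r}$, $\norm{\zeta^n}_{\varepsilon} \le C h^r \norm{E(t^n)}_{H^r}$, $\norm{\xi^n}_{\mu} \le C h^r \norm{H(t^n)}_{H^r}$, and, via the commuting-diagram property of $\Pi_h^0,\Pi_h^1$, also $\norm{\nabla \eta^n} \le C h^r$ and $\norm{\nabla\times \zeta^n} \le C h^r$, with analogous bounds for the time derivatives since the $\Pi_h^\bullet$ are linear and time independent. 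I would take the discrete initial data to be $p_h^0 = \Pi_h^0 p_0$, $E_h^0 = \Pi_h^1 E_0$, $H_h^0 = \Pi_h^2 H_0$, so that $\eta_h^0 = \zeta_h^0 = \xi_h^0 = 0$ and the initial error is already $O(h^r)$.

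\textbf{Error equations.} First I would reuse the Taylor-remainder identities from the proof of Theorem~\ref{thm:dscrt_error_estmt_cn} (with the same $R_p^n,R_E^n,R_H^n$ and $r_p^n,r_E^n,r_H^n$) to express $(p,E,H)$ as an approximate solution of the Crank--Nicholson form, now tested only against $(\widetilde p,\widetilde E,\widetilde H)\in U_h\times V_h\times W_h$. Subtracting the fully discrete system in Equations~\labelcref{eqn:maxwell_p_cn_full,eqn:maxwell_E_cn_full,eqn:maxwell_H_cn_full} and inserting the splitting produces a system of weak equations for $(\eta_h^n,\zeta_h^n,\xi_h^n)$ whose right-hand sides collect: (i) the Taylor remainders, already estimated in Equations~\labelcref{eqn:Remainder_norm_p_cn,eqn:Remainder_norms_pEH_cn}; (ii) ``zeroth-order'' spatial consistency terms such as $\ainnerproduct{(\eta^n-\eta^{n-1})/\Delta t}{\widetilde p}$, $\ainnerproduct{\varepsilon(\zeta^n-\zeta^{n-1})/\Delta t}{\widetilde E}$, $\ainnerproduct{\mu(\xi^n-\xi^{n-1})/\Delta t}{\widetilde H}$, $\ainnerproduct{\nabla(\eta^n+\eta^{n-1})}{\widetilde E}$ and $\ainnerproduct{\nabla\times(\zeta^n+\zeta^{n-1})}{\widetilde H}$, each bounded by $C h^r$ times an $L^2$ norm of the test function; and (iii) the ``dangerous'' cross terms $\ainnerproduct{\varepsilon(\zeta^n+\zeta^{n-1})}{\nabla\widetilde p}$ and $\ainnerproduct{\xi^n+\xi^{n-1}}{\nabla\times\widetilde E}$, which carry a derivative of the test function and therefore cannot be absorbed without an inverse inequality.

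\textbf{Killing the dangerous terms.} The decisive structural point is that the finite element spaces form a de Rham subcomplex, so $\nabla U_h\subseteq V_h$ and $\nabla\times V_h\subseteq W_h$; since $\Pi_h^1$ and $\Pi_h^2$ are ($L^2$-orthogonal) projections onto $V_h$ and $W_h$, the errors $\zeta^n$ and $\xi^n$ are $L^2$-orthogonal to $\nabla\widetilde p$ and $\nabla\times\widetilde E$ for every $\widetilde p\in U_h$, $\widetilde E\in V_h$, so the terms in (iii) vanish identically (with $\varepsilon,\mu$ treated as constants the weights are immaterial here). This is the step I expect to be the main obstacle: it is where the FEEC commuting/subcomplex machinery and the precise choice of projection operators must be invoked, and one must also verify that the cancellation of the internal cross pairings $\ainnerproduct{\zeta_h}{\nabla\eta_h}$ and $\ainnerproduct{\nabla\times\zeta_h}{\xi_h}$ that powers Theorems~\ref{thm:dscrt_enrgy_estmt_cn} and~\ref{thm:dscrt_error_estmt_cn} still occurs after the splitting.

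\textbf{Energy argument and conclusion.} With the error equations reduced to Taylor remainders plus $O(h^r)$ consistency data, I would proceed exactly as in Theorems~\ref{thm:dscrt_enrgy_estmt_cn} and~\ref{thm:dscrt_error_estmt_cn}: test with $\widetilde p = 2\Delta t\,\varepsilon^{-1}(\eta_h^n+\eta_h^{n-1})$, $\widetilde E = 2\Delta t(\zeta_h^n+\zeta_h^{n-1})$, $\widetilde H = 2\Delta t(\xi_h^n+\xi_h^{n-1})$, add the three equations so the $\nabla$- and $\nabla\times$-pairings among $\eta_h,\zeta_h,\xi_h$ telescope, bound the right-hand side with Cauchy--Schwarz, the polarization and AM--GM inequalities and the Triangle inequality by $\Delta t$ times $\bigl(\norm{\eta_h^n}_{\varepsilon^{-1}}^2+\norm{\zeta_h^n}_{\varepsilon}^2+\norm{\xi_h^n}_{\mu}^2\bigr)$ at the two adjacent levels plus $\Delta t$ times the squared remainder-and-consistency data, sum over $n=1,\dots,N$, and apply the discrete Gronwall inequality, Lemma~\ref{lemma:gronwall_dscrt}, with $\delta=\Delta t$, $b_n=0$, $\gamma_n=1$ (valid for $\Delta t<1/2$). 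Summing the remainder estimates contributes $O((\Delta t)^4)$ and summing the squared $O(h^r)$ consistency terms contributes $O(h^{2r})$, with the higher-order mixed term $O(h^{2r}(\Delta t)^2)$ falling out of the product structure of the combined data; hence $\norm{\eta_h^N}_{\varepsilon^{-1}}^2+\norm{\zeta_h^N}_{\varepsilon}^2+\norm{\xi_h^N}_{\mu}^2 \le \widetilde C\bigl[(\Delta t)^4 + h^{2r} + h^{2r}(\Delta t)^2\bigr]$. Taking square roots, using $1$- and $2$-norm equivalence, and then the triangle inequality $\norm{e_{p_h}^N}_{\varepsilon^{-1}}\le\norm{\eta^N}_{\varepsilon^{-1}}+\norm{\eta_h^N}_{\varepsilon^{-1}}$ together with the approximation bounds on $\eta^N,\zeta^N,\xi^N$ yields the stated estimate.
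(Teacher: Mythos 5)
Your proposal is correct and follows essentially the same route as the paper's proof: the same splitting via the projections $\Pi_h^0,\Pi_h^1,\Pi_h^2$, the same error equations built from the Crank--Nicholson Taylor-remainder identities, the same test functions $2\Delta t\,\varepsilon^{-1}(\eta_h^n+\eta_h^{n-1})$, $2\Delta t(\zeta_h^n+\zeta_h^{n-1})$, $2\Delta t(\xi_h^n+\xi_h^{n-1})$ with the subcomplex property $\nabla U_h\subseteq V_h$, $\nabla\times V_h\subseteq W_h$ eliminating the cross terms, the same discrete Gronwall and remainder/approximability estimates, and the same choice of projected initial data and final triangle-inequality assembly. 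The only discrepancy is the exponent in your mixed term ($h^{2r}(\Delta t)^2$ versus the paper's $h^{2r}(\Delta t)^4$), which is immaterial since either is absorbed into the stated bound.
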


\begin{proof}
First, we shall subtract the set of equations for the full discretization as in Equations~\labelcref{eqn:maxwell_p_cn_full,eqn:maxwell_E_cn_full,eqn:maxwell_H_cn_full} from Equations~\labelcref{eqn:remainder_p_cn,eqn:remainder_E_cn,eqn:remainder_H_cn},  and then use the error terms in Equations~\labelcref{eqn:p_fullerror_cn,eqn:E_fullerror_cn,eqn:H_fullerror_cn} and thereby obtain:
\begin{align*}
  \aInnerproduct{\dfrac{e_{p_h}^n - e_{p_h}^{n-1}}{\Delta t}}{\widetilde{p}} - \dfrac{1}{2}\aInnerproduct{ \varepsilon \left(e_{E_h}^n + e_{E_h}^{n-1} \right)}{\nabla \widetilde{p}} &= \dfrac{1}{2} \aInnerproduct{ R_p^{n} - \varepsilon r_E^n}{\widetilde{p}}, \\
  \dfrac{1}{2} \aInnerproduct{\nabla \left( e_{p_h}^n + \nabla e_{p_h}^{n - 1} \right)}{\widetilde{E}} + \aInnerproduct{\varepsilon \dfrac{e_{E_h}^n - e_{E_h}^{n - 1}}{\Delta t}}{\widetilde{E}} - \dfrac{1}{2}\aInnerproduct{e_{H_h}^n + e_{H_h}^{n - 1}}{\nabla \times \widetilde{E}} &= 
  \dfrac{1}{2} \aInnerproduct{\varepsilon R_E^n + \nabla r_p^n - r_H^n}{\widetilde{E}}, \\
  \dfrac{1}{2} \aInnerproduct{\nabla \times \left( e_{E_h}^n + e_{E_h}^{n - 1} \right)}{\widetilde{H}} + \aInnerproduct{\mu \dfrac{e_{H_h}^n - e_{H_h}^{n-1}}{\Delta t}}{\widetilde{H}} &= \dfrac{1}{2} \aInnerproduct{\mu R_H^n + \nabla \times r_E^n}{\widetilde{H}}.
\end{align*}
Next, using the values of the error terms $e_{p_h}^n$, $e_{E_h}^n$ and $e_{H_h}^n$ as in Equations~\labelcref{eqn:p_fullerror_cn,eqn:E_fullerror_cn,eqn:H_fullerror_cn} in the above equations, we get:
\[
  \aInnerproduct{\dfrac{\left(\eta^n - \eta^{n - 1}\right) - \left(\eta^n_h - \eta^{n - 1}_h\right)}{\Delta t}}{\widetilde{p}} - \dfrac{1}{2}\aInnerproduct{ \varepsilon \left(\zeta^n + \zeta^{n - 1} \right) - \varepsilon \left(\zeta^n_h + \zeta^{n - 1}_h \right)}{\nabla \widetilde{p}}  = \dfrac{1}{2} \aInnerproduct{ R_p^{n} - \varepsilon r_E^n}{\widetilde{p}},
\]
\vspace{-1em} \begin{multline*}
  \dfrac{1}{2} \aInnerproduct{\nabla \left(\eta^n + \eta^{n - 1}\right) - \nabla \left(\eta^n_h + \eta^{n - 1}_h \right)}{\widetilde{E}} + \aInnerproduct{\varepsilon \dfrac{\left(\zeta^n - \zeta^{n - 1} \right) - \left(\zeta^n_h - \zeta^{n - 1}_h \right)}{\Delta t}}{\widetilde{E}} \, - \\
  \dfrac{1}{2}\aInnerproduct{\left(\xi^n + \xi^{n - 1} \right) - \left(\xi^n_h + \xi^{n - 1}_h \right)}{\nabla \times \widetilde{E}} = \dfrac{1}{2} \aInnerproduct{\varepsilon R_E^n + \nabla r_p^n - r_H^n}{\widetilde{E}},
\end{multline*}
\vspace{-1em}
\[
  \aInnerproduct{\mu \dfrac{\left(\xi^n - \xi^{n - 1} \right) - \left(\xi^n_h - \xi^{n - 1}_h \right)}{\Delta t}}{\widetilde{H}} + \dfrac{1}{2} \aInnerproduct{\nabla \times \left(\zeta^n + \zeta^{n - 1} \right) - \nabla \times \left(\zeta^n_h + \zeta^{n - 1}_h \right)}{\widetilde{H}} = \dfrac{1}{2} \aInnerproduct{\mu R_H^n + \nabla \times r_E^n}{\widetilde{H}}.
\]
Since these equations are true for all $(\widetilde{p}, \widetilde{E}, \widetilde{H}) \in U_h \times V_h \times W_h$, we choose $\widetilde{p} = -2 \Delta t \varepsilon^{-1} \left( \eta_h^n + \eta_h^{n - 1}\right)$, $\widetilde{E} = -2 \Delta t \left( \zeta_h^n + \zeta_h^{n - 1} \right)$ and $\widetilde{H} = -2 \Delta t \left( \xi_h^n + \xi_h^{n - 1} \right)$ and using the fact that $\nabla U_h \subseteq V_h$ and $\nabla \times V_h \subseteq W_h$, we obtain:
\begin{multline}
2 \ainnerproduct{\varepsilon^{-1} \left( \eta^n_h - \eta^{n - 1}_h \right)}{\eta^n_h + \eta^{n - 1}_h} + 2 \ainnerproduct{\varepsilon \left( \zeta^n_h - \zeta^{n - 1}_h \right)}{\zeta^n_h + \zeta^{n - 1}_h} \, + 2 \ainnerproduct{\mu \left( \xi^n_h - \xi^{n - 1}_h \right)}{\xi^n_h + \xi^{n - 1}_h} = \\
2 \ainnerproduct{\varepsilon^{-1} \left( \eta^n - \eta^{n - 1} \right)}{\eta^n_h + \eta^{n - 1}_h} \, + 2 \ainnerproduct{\varepsilon \left( \zeta^n - \zeta^{n - 1} \right)}{\zeta^n_h + \zeta^{n - 1}_h} + 2 \ainnerproduct{\mu \left( \xi^n - \xi^{n - 1} \right)}{\xi^n_h + \xi^{n - 1}_h} \, + \\
\Delta t \ainnerproduct{- R_p^n + \varepsilon r_E^n}{\varepsilon^{-1} \left( \eta^n_h + \eta^{n - 1}_h \right)} + \Delta t \ainnerproduct{-\varepsilon R_E^n - \nabla r_p^n + r_H^n}{\zeta^n_h + \zeta^{n - 1}_h} + \Delta t \ainnerproduct{-\mu R_H^n - \nabla \times r_E^n}{\xi^n_h + \xi^{n - 1}_h}, \label{eqn:suberror_p+E+H_cn}
\end{multline}
Consider that  $\varepsilon^{-1} \left(\eta^n - \eta^{n-1}\right) = \varepsilon^{-1} \left(I - \Pi_h^0\right) \left( p(t^n) - p(t^{n-1})\right)$ by Equation~\eqref{eqn:p_fullerror_sub_cn}. Using the Taylor theorem with remainder as in Theorem~\ref{thm:dscrt_error_estmt_cn}, applying the Cauchy-Schwarz, AM-GM, and Triangle inequalities, we have the following resulting inequality:
\begin{multline*}
  \ainnerproduct{\varepsilon^{-1} \left( \eta^n - \eta^{n - 1} \right)}{\eta^n_h + \eta^{n - 1}_h} = \Delta t \ainnerproduct{\varepsilon^{-1} \left( I - \Pi_h^0 \right) \dfrac{\partial p}{\partial t} (t^{n - \frac{1}{2}}) }{\eta^n_h + \eta^{n - 1}_h} \, + \dfrac{\Delta t}{2} \ainnerproduct{\varepsilon^{-1} \left( I - \Pi_h^0 \right) R_p^n}{\eta^n_h + \eta^{n - 1}_h} \\ 
  \le \dfrac{\Delta t}{2} \bigg[ \norm[\bigg]{(I - \Pi_h^0) \dfrac{\partial p}{\partial t}(t^{n - \frac{1}{2}})}^2_{\varepsilon^{-1}} \!\! + \norm[\bigg]{(I - \Pi_h^0) R_p^n}^2_{\varepsilon^{-1}} \bigg] \! + 2 \Delta t \bigg[ \norm{\eta_h^n}^2_{\varepsilon^{-1}} + \norm{\eta_h^{n - 1}}^2_{\varepsilon^{-1}} \bigg].
\end{multline*}
Similarly, using Equations~\labelcref{eqn:E_fullerror_sub_cn,eqn:H_fullerror_sub_cn} for the error terms for $E$ and $H$, we obtain:
\begin{gather*}
  \ainnerproduct{\varepsilon \left( \zeta^n - \zeta^{n - 1} \right)}{\zeta^n_h + \zeta^{n - 1}_h} \le \dfrac{\Delta t }{2} \bigg[ \norm[\bigg]{(I - \Pi_h^1) \dfrac{\partial E}{\partial t}(t^{n - \frac{1}{2}})}^2_{\varepsilon} \!\! + \norm[\bigg]{(I - \Pi_h^1) R_E^n}^2_{\varepsilon} \bigg] \! + 2 \Delta t \bigg[ \norm{\zeta_h^n}^2_{\varepsilon} + \norm{\zeta_h^{n - 1}}^2_{\varepsilon} \bigg], \\ 
\ainnerproduct{\mu \left( \xi^n - \xi^{n - 1} \right)}{\xi^n_h + \xi^{n - 1}_h} \le \dfrac{\Delta t }{2} \bigg[ \norm[\bigg]{(I - \Pi_h^2) \dfrac{\partial H}{\partial t}(t^{n - \frac{1}{2}})}^2_\mu \!\!  + \norm[\bigg]{(I - \Pi_h^2) R_H^n}^2_{\mu} \bigg] \! + 2 \Delta t \bigg[ \norm{\xi_h^n}^2_{\mu} + \norm{\xi_h^{n - 1}}^2_{\mu} \bigg].
\end{gather*}
Using these inequalities for $\eta$, $\zeta$ and $\xi$ in Equation~\eqref{eqn:suberror_p+E+H_cn}, we thus obtain the following estimate:
\begin{multline*}
2 \bigg[ \norm{\eta_h^n}^2_{\varepsilon^{-1}} - \norm{\eta_h^{n - 1}}^2_{\varepsilon^{-1}} + \norm{\zeta_h^n}^2_{\varepsilon} - \norm{\zeta_h^{n - 1}}^2_{\varepsilon} + \norm{\xi_h^n}^2_{\mu} - \norm{\xi_h^{n - 1}}^2_{\mu} \bigg] \le \\
\Delta t \bigg[ \norm[\bigg]{(I - \Pi_h^0) \dfrac{\partial p}{\partial t}(t^{n - \frac{1}{2}})}^2_{\varepsilon^{-1}} + \norm[\bigg]{(I - \Pi_h^0) R_p^n}^2_{\varepsilon^{-1}} + \norm[\bigg]{(I - \Pi_h^1) \dfrac{\partial E}{\partial t}(t^{n - \frac{1}{2}})}^2_{\varepsilon}  + \norm[\bigg]{(I - \Pi_h^1) R_E^n}^2_{\varepsilon} + \\
  \norm[\bigg]{(I - \Pi_h^2) \dfrac{\partial H}{\partial t}(t^{n-\frac{1}{2}})}^2_\mu + \norm[\bigg]{(I - \Pi_h^2) R_H^n}^2_\mu + \norm{R_p^n}^2_{\varepsilon^{-1}} + \norm{R_E^n}^2_{\varepsilon} + \norm{R_H^n}^2_\mu + \norm{\nabla r_p^n} ^2_{\varepsilon^{-1}} + \norm{r_E^n}^2_{\varepsilon} + \\
 \varepsilon^{-1} \mu^{-1} \norm{\nabla \times r_E^n}^2_{\varepsilon} +  \varepsilon^{-1} \mu^{-1} \norm{r_H^n}^2_{\mu} + 3 \big( \norm{\eta_h^n}^2_{\varepsilon^{-1}} + \norm{\eta_h^{n - 1}}^2_{\varepsilon^{-1}} + \norm{\zeta_h^n}^2_{\varepsilon} + \norm{\zeta_h^{n - 1}}^2_{\varepsilon} + \norm{\xi_h^n}^2_\mu + \norm{\xi_h^{n - 1}}^2_\mu \big) \bigg].
\end{multline*}
Summing from $n = 1$ to $N$, we get:
\begin{multline*}
  \norm{\eta_h^N}^2_{\varepsilon^{-1}} + \norm{\zeta_h^N}^2_{\varepsilon} + \norm{\xi_h^N}^2_{\mu} \le 3 \Delta t \sum\limits_{n = 0}^{N} \bigg[ \norm{\eta_h^n}^2_{\varepsilon^{-1}} +\norm{\zeta_h^n}^2_{\varepsilon} + \norm{\xi_h^n}^2_{\mu} \bigg] + \Delta t \sum\limits_{n = 0}^{N} \bigg[ \norm[\bigg]{(I - \Pi_h^0) \dfrac{\partial p}{\partial t}(t^{n - \frac{1}{2}})}^2_{\varepsilon^{-1}} + \\
  \norm[\bigg]{(I - \Pi_h^0) R_p^n}^2_{\varepsilon^{-1}} + \norm[\bigg]{(I - \Pi_h^1) \dfrac{\partial E}{\partial t}(t^{n - \frac{1}{2}})}^2_{\varepsilon} + \norm[\bigg]{(I - \Pi_h^1) R_E^n}^2_{\varepsilon} + \norm[\bigg]{(I - \Pi_h^2) \dfrac{\partial H}{\partial t}(t^{n - \frac{1}{2}})}^2_\mu + \norm[\bigg]{(I - \Pi_h^2) R_H^n}^2_\mu + \\
  \norm{R_p^n}^2_{\varepsilon^{-1}} + \norm{R_E^n}^2_{\varepsilon} + \norm{R_H^n}^2_{\mu} + \norm{\nabla r_p^n} ^2_{\varepsilon^{-1}} + \norm{r_E^n}^2_{\varepsilon} + \varepsilon^{-1} \mu^{-1} \norm{\nabla \times r_E^n}^2_{\varepsilon} +  \varepsilon^{-1} \mu^{-1} \norm{r_H^n}^2_{\mu} \bigg] + \bigg[ \norm{\eta_h^0}^2_{\varepsilon^{-1}} + \norm{\zeta_h^0}^2_{\varepsilon} + \norm{\xi_h^0}^2_{\mu} \bigg].
\end{multline*}
We next apply the discrete Gronwall inequality by setting $\delta \coloneq \Delta t$, $g_0 \coloneq \norm{\eta_h^0}^2_{\varepsilon^{-1}} + \norm{\zeta_h^0}^2_{\varepsilon} + \norm{\xi_h^0}^2_{\mu}$, $a_n \coloneq \norm{\eta_h^n}^2_{\varepsilon^{-1}} + \norm{\zeta_h^n}^2_{\varepsilon} + \norm{\xi_h^n}^2_{\mu}$, $b_n \coloneq 0$, $c_n \coloneq \norm{(I - \Pi_h^0) \dfrac{\partial p}{\partial t}(t^{n - \frac{1}{2}})}^2_{\varepsilon^{-1}} + \norm{(I - \Pi_h^0) R_p^n}^2_{\varepsilon^{-1}} + \norm{(I - \Pi_h^1) \dfrac{\partial E}{\partial t}(t^{n - \frac{1}{2}})}^2_{\varepsilon} + \norm{(I - \Pi_h^1) R_E^n}^2_{\varepsilon} + \norm{(I - \Pi_h^2) \dfrac{\partial H}{\partial t}(t^{n-\frac{1}{2}})}^2_\mu + \norm{(I - \Pi_h^2) R_H^n}^2_\mu + \norm{R_p^n}^2_{\varepsilon^{-1}} + \norm{R_E^n}^2_{\varepsilon} + \norm{R_H^n}^2_{\mu}+ \norm{\nabla r_p^n} ^2_{\varepsilon^{-1}} + \norm{r_E^n}^2_{\varepsilon} + \varepsilon^{-1} \mu^{-1} \norm{\nabla \times r_E^n}^2_{\varepsilon} + \varepsilon^{-1} \mu^{-1} \norm{r_H^n}^2_{\mu}$, and $\gamma_n \coloneq 3$, and with $\sigma_n = \left(1 - 3 \Delta t \right)^{-1}$. Note that for the condition $\gamma_n \delta < 1$ to hold, we that $\Delta t < 1/3$, and using that $(N + 1)\Delta t = T + \Delta t \le T + 1/6$ for $\Delta t < 1/6$, we get:
\begin{multline*}
  \norm{\eta_h^N}^2_{\varepsilon^{-1}} + \norm{\zeta_h^N}^2_{\varepsilon} + \norm{\xi_h^N}^2_{\mu} \le \Bigg[\Delta t \sum\limits_{n = 0}^{N} \Bigg[ \norm[\bigg]{(I - \Pi_h^0) \dfrac{\partial p}{\partial t}(t^{n - \frac{1}{2}})}^2_{\varepsilon^{-1}} + \norm[\bigg]{(I - \Pi_h^0) R_p^n}^2_{\varepsilon^{-1}} + \\
  \norm[\bigg]{(I - \Pi_h^1) \dfrac{\partial E}{\partial t}(t^{n - \frac{1}{2}})}^2_{\varepsilon} + \norm[\bigg]{(I - \Pi_h^1) R_E^n}^2_{\varepsilon} + \norm[\bigg]{(I - \Pi_h^2) \dfrac{\partial H}{\partial t}(t^{n - \frac{1}{2}})}^2_\mu + \norm[\bigg]{(I - \Pi_h^2) R_H^n}^2_\mu + \norm{R_p^n}^2_{\varepsilon^{-1}} + \norm{R_E^n}^2_{\varepsilon} + \\
 \norm{R_H^n}^2_{\mu} + \norm{\nabla r_p^n} ^2_{\varepsilon^{-1}} + \norm{r_E^n}^2_{\varepsilon} + \varepsilon^{-1} \mu^{-1} \norm{\nabla \times r_E^n}^2_{\varepsilon} +  \varepsilon^{-1} \mu^{-1} \norm{r_H^n}^2_{\mu}\Bigg] + \left[ \norm{\eta_h^0}^2_{\varepsilon^{-1}} + \norm{\zeta_h^0}^2_{\varepsilon} + \norm{\xi_h^0}^2_{\mu} \right] \Bigg] \exp\left( 6 T + 1 \right).
\end{multline*}
Using our estimates for the Taylor remainders from Equations~\labelcref{eqn:Remainder_norm_p_cn,eqn:Remainder_norms_pEH_cn} for these terms on the right hand side of the above inequality, we further get that:
\[
  \Delta t \sum\limits_{n = 0}^N \Big[ \norm{R^n_p}^2_{\varepsilon^{-1}} + \norm{R^n_E}^2_{\varepsilon} + \norm{R^n_H}^2_\mu \Big] \le \dfrac{(\Delta t)^4}{5} \Bigg[ \norm[\bigg]{\dfrac{\partial^3 p}{\partial t^3}}^2_{L^2[0, T] \times L^2_{\varepsilon^{-1}}(\Omega)} \!\! + \, \norm[\bigg]{\dfrac{\partial^3 E}{\partial t^3}}^2_{L^2[0, T] \times L^2_\varepsilon(\Omega)} \!\! + \, \norm[\bigg]{\dfrac{\partial^3 H}{\partial t^3}}^2_{L^2[0, T] \times L^2_\mu(\Omega)} \Bigg],
\]
and that:
\begin{multline*}
  \Delta t \sum\limits_{n = 0}^N \Big[ \norm{\nabla r^n_p}^2_{\varepsilon^{-1}} + \norm{r^n_E}^2_{\varepsilon} + \varepsilon^{-1} \mu^{-1} \norm{\nabla \times r^n_E}^2_{\varepsilon} +  \varepsilon^{-1} \mu^{-1} \norm{r^n_H}^2_\mu \Big] \le \dfrac{(\Delta t)^4}{12} \Bigg[ \norm[\bigg]{\dfrac{\partial^2 \left(\nabla p \right)}{\partial t^2}}^2_{L^2[0, T] \times \mathring{H}^1_{\varepsilon^{-1}}(\Omega)} \!\! + \\ 
  \norm[\bigg]{\dfrac{\partial^2 E}{\partial t^2}}^2_{L^2[0, T] \times L^2_\varepsilon(\Omega)} \!\! + \varepsilon^{-1} \mu^{-1} \norm[\bigg]{\dfrac{\partial^2 (\nabla \times E)}{\partial t^2}}^2_{L^2[0, T] \times \mathring{H}_\varepsilon(\curl; \Omega)} \!\! + \varepsilon^{-1} \mu^{-1} \norm[\bigg]{\dfrac{\partial^2 H}{\partial t^2}}^2_{L^2[0, T] \times L^2_\mu(\Omega)} \Bigg].
\end{multline*}
Now, for $p \in \mathring{H}_{\varepsilon^{-1}}^1(\Omega)$, $E \in \mathring{H}_\varepsilon(\curl; \Omega)$ and $H \in \mathring{H}_\mu(\divgn; \Omega)$, there exists positive bounded constants $C_{1, p}$, $C_{2, p}$, $C_{1, E}$, $C_{2, E}$, $C_{1, H}$, and $C_{2, H}$ such that we have the following error bounds for the $L^2$ projections:
\begin{alignat*}{3}
  \norm[\bigg]{(I - \Pi_h^0) \dfrac{\partial p}{\partial t}(t^{n - \frac{1}{2}})}_{\varepsilon^{-1}} &\le C_{1, p} h^r \norm[\bigg]{\dfrac{\partial p}{\partial t}(t^{n - \frac{1}{2}})}_{\varepsilon^{-1}}, && \qquad \norm[\bigg]{(I - \Pi_h^0) R_p^n}_{\varepsilon^{-1}} &&\le C_{2, p} h^r \norm[\bigg]{R_p^n}_{\varepsilon^{-1}}, \\
  \norm[\bigg]{(I - \Pi_h^1) \dfrac{\partial E}{\partial t}(t^{n - \frac{1}{2}})}_{\varepsilon} &\le C_{1, E} h^r \norm[\bigg]{\dfrac{\partial E}{\partial t}(t^{n - \frac{1}{2}})}_{\varepsilon}, && \qquad \norm[\bigg]{(I - \Pi_h^1) R_E^n}_{\varepsilon} &&\le C_{2, E} h^r \norm[\bigg]{R_E^n}_{\varepsilon}, \\
  \norm[\bigg]{(I - \Pi_h^2) \dfrac{\partial H}{\partial t}(t^{n - \frac{1}{2}})}_{\mu} &\le C_{1, H} h^r \norm[\bigg]{\dfrac{\partial H}{\partial t}(t^{n - \frac{1}{2}})}_{\mu}, && \qquad \norm[\bigg]{(I - \Pi_h^2) R_H^n}_{\mu} &&\le C_{2, H} h^r \norm[\bigg]{R_H^n}_{\mu}.
\end{alignat*}
Set $C_0 \coloneq \max \{C_{1, p}, C_{2, p}, C_{1, E}, C_{2, E}, C_{1, H}, C_{2, H} \}$. We therefore have that:
 \begin{multline*}
    \Delta t \sum\limits_{n = 0}^N \left[ \norm[\bigg]{(I - \Pi_h^0) \dfrac{\partial p}{\partial t}(t^{n - \frac{1}{2}})}^2_{\varepsilon^{-1}} \!\! + \norm[\bigg]{(I - \Pi_h^1) \dfrac{\partial E}{\partial t}(t^{n - \frac{1}{2}})}^2_{\varepsilon} \!\! + \norm[\bigg]{(I - \Pi_h^2) \dfrac{\partial H}{\partial t}(t^{n - \frac{1}{2}})}^2_\mu \right], \\
\le C_0 h^{2 r} \sum\limits_{n = 0}^N \Delta t \left[ \norm[\bigg]{\dfrac{\partial p}{\partial t}(t^{n - \frac{1}{2}})}^2_{\varepsilon^{-1}} \!\! + \norm[\bigg]{\dfrac{\partial E}{\partial t}(t^{n - \frac{1}{2}})}^2_{\varepsilon} \!\! + \norm[\bigg]{\dfrac{\partial H}{\partial t}(t^{n - \frac{1}{2}})}^2_{\mu} \right], \\
\le C_0 h^{2 r} \int\limits_0^T \left[ \norm[\bigg]{\dfrac{\partial p}{\partial t}(t^{n - \frac{1}{2}})}^2_{\varepsilon^{-1}} \!\! + \norm[\bigg]{\dfrac{\partial E}{\partial t}(t^{n - \frac{1}{2}})}^2_{\varepsilon} \!\! + \norm[\bigg]{\dfrac{\partial H}{\partial t}(t^{n - \frac{1}{2}})}^2_{\mu} \right] dt, \\
= C_0 h^{2 r} \left[ \norm[\bigg]{\dfrac{\partial p}{\partial t}}^2_{L^2[0, T] \times L^2_{\varepsilon^{-1}}(\Omega)} \!\! + \norm[\bigg]{\dfrac{\partial E}{\partial t}}^2_{L^2[0, T] \times L^2_{\varepsilon}(\Omega)} \!\! + \norm[\bigg]{\dfrac{\partial H}{\partial t}}^2_{L^2[0, T] \times L^2_{\mu}(\Omega)} \right],
\end{multline*}
and likewise for the $L^2$ projection of the remainder terms:
\begin{multline*}
  \Delta t \sum\limits_{n = 0}^N \left[ \norm[\bigg]{(I - \Pi_h^0) R_p^n}_{\varepsilon^{-1}} \!\! + \norm[\bigg]{(I - \Pi_h^1) R_E^n}_{\varepsilon} \!\! + \norm[\bigg]{(I - \Pi_h^2) R_H^n}_{\mu} \right], \\
  \le C_0 h^{2 r} \sum\limits_{n = 0}^N  \Delta t \left[ \norm{R_p^n}_{L^2_{\varepsilon^{-1}}(\Omega)} + \norm{R_E^n}_{L^2_{\varepsilon}(\Omega)} + \norm{R_H^n}_{L^2_{\mu}(\Omega)} \right], \\
  \le C_0 h^{2 r} \dfrac{(\Delta t)^4}{5} \left[ \norm[\bigg]{\dfrac{\partial^3 p}{\partial t^3}}^2_{L^2[0, T] \times L^2_{\varepsilon^{-1}}(\Omega)} \!\! + \norm[\bigg]{\dfrac{\partial^3 E}{\partial t^3}}^2_{L^2[0, T] \times L^2_{\varepsilon}(\Omega)} \!\! + \norm[\bigg]{\dfrac{\partial^3 H}{\partial t^3}}^2_{L^2[0, T] \times L^2_{\mu}(\Omega)} \right].
\end{multline*}
Now, we take $p_h^0 = \Pi_h^0 p_0$, $E_h^0 = \Pi_h^1 E_0$, and $H_h^0 = \Pi_h^2 H_0$, and note that there exists positive bounded constants $M_1, \, M_2$ and $M_3$ such that:
\begin{align*}
  \norm[\bigg]{\dfrac{\partial p}{\partial t}}^2_{L^2[0, T] \times L^2_{\varepsilon^{-1}}(\Omega)} \!\! + \norm[\bigg]{\dfrac{\partial E}{\partial t}}^2_{L^2[0, T] \times L^2_{\varepsilon}(\Omega)} \!\! + \norm[\bigg]{\dfrac{\partial H}{\partial t}}^2_{L^2[0, T] \times L^2_{\mu}(\Omega)} &\le M_1, \\
  \norm[\bigg]{\dfrac{\partial^3 p}{\partial t^3}}^2_{L^2[0, T] \times L^2_{\varepsilon^{-1}}(\Omega)} \!\! + \norm[\bigg]{\dfrac{\partial^3 E}{\partial t^3}}^2_{L^2[0, T] \times L^2_{\varepsilon}(\Omega)} \!\! + \norm[\bigg]{\dfrac{\partial^3 H}{\partial t^3}}^2_{L^2[0, T] \times L^2_{\mu}(\Omega)} &\le M_2,
\end{align*}
\vspace{-1em} \begin{multline*}
\norm[\bigg]{\dfrac{\partial^2 \left(\nabla p \right)}{\partial t^2}}^2_{L^2[0, T] \times \mathring{H}^1_{\varepsilon^{-1}}(\Omega)} \!\! + \norm[\bigg]{\dfrac{\partial^2 E}{\partial t^2}}^2_{L^2[0, T] \times L^2_\varepsilon(\Omega)} \!\! + \\
\varepsilon^{-1} \mu^{-1} \norm[\bigg]{\dfrac{\partial^2 (\nabla \times E)}{\partial t^2}}^2_{L^2[0, T] \times \mathring{H}_\varepsilon(\curl; \Omega)} \!\! + \varepsilon^{-1} \mu^{-1} \norm[\bigg]{\dfrac{\partial^2 H}{\partial t^2}}^2_{L^2[0, T] \times L^2_\mu(\Omega)} \leq M_3.
\end{multline*}
Consequently, we have the following estimate:
\[
  \norm{\eta_h^N}^2_{\varepsilon^{-1}} + \norm{\zeta_h^N}^2_{\varepsilon} + \norm{\xi_h^N}^2_{\mu} \le \widetilde{C} \left[h^{2 r} + h^{2 r} (\Delta t)^4 + (\Delta t)^4 \right],
\]
where $\widetilde{C} = \max\{C_0 M_1, C_0 M_2/5, M_2/5, M_3/12 \} \exp(6 T +1)$ which then using the equivalence between $1$- and $2$-norms gives us that:
\[
  \norm{\eta_h^N}_{\varepsilon^{-1}} + \norm{\zeta_h^N}_{\varepsilon} + \norm{\xi_h^N}_{\mu} \le C_1 \left[h^{r} + h^r (\Delta t)^2 + (\Delta t)^2 \right],
\]
where $C_1 = \sqrt{3 \widetilde{C}}$. Also, there are positive bounded constants $\widetilde{C_2}$, $\widetilde{C_3}$ and $\widetilde{C_4}$ such that:
\begin{alignat*}{4}
  \norm{\eta^N}_{\varepsilon^{-1}} &= \norm{(I - \Pi_h^0) p(t^N)}_{\varepsilon^{-1}} &&\le \widetilde{C_2} h^r \norm{p(t^N)}_{L^2_{\varepsilon^{-1}}(\Omega)} &&\implies \norm{\eta^N}_{\varepsilon^{-1}} &&\le C_2 h^r, \\
  \norm{\zeta^N}_{\varepsilon} &= \norm{(I - \Pi_h^1) E(t^N)}_{\varepsilon} &&\le \widetilde{C_3} h^r \norm{E(t^N)}_{L^2_{\varepsilon}(\Omega)} &&\implies \norm{\zeta^N}_{\varepsilon} &&\le C_3 h^r, \\
  \norm{\xi^N}_\mu &= \norm{(I - \Pi_h^2) H(t^N)}_\mu &&\le \widetilde{C_4} h^r \norm{H(t^N)}_{L^2_{\mu}(\Omega)} &&\implies \norm{\xi^N}_\mu &&\le C_4 h^r,
\end{alignat*}
in which $C_2 = \widetilde{C_2} \norm{p(t^N)}_{L^2_{\varepsilon^{-1}}(\Omega)}$, $C_3 = \widetilde{C_3} \norm{E(t^N)}_{L^2_{\varepsilon}(\Omega)}$ and $C_4 = \widetilde{C_4} \norm{H(t^N)}_{L^2_{\mu}(\Omega)}$ are all bounded positive constants due to Theorem~\ref{thm:dscrt_enrgy_estmt_cn}. Finally, this provides us with our desired result by choosing $C = C_1 + C_2 + C_3 + C_4$:
\begin{align*}
  \norm{e_{p_h}^n}_{\varepsilon^{-1}} + \norm{e_E^{N,h}}_{\varepsilon} + \norm{e_H^{N,h}}_{\mu} &= \norm{\eta^N - \eta^N_h}_{\varepsilon^{-1}} + \norm{\zeta^N - \zeta^N_h}_{\varepsilon} + \norm{\xi^N - \xi^N_h}_\mu \\
&\le C \left[(\Delta t)^2 + h^r + h^r (\Delta t)^2 \right]. \qedhere
\end{align*}
\end{proof}

\subsection{Implicit Leapfrog Scheme}

For the implicit leapfrog scheme as in Equations~\labelcref{eqn:maxwell_p_lf,eqn:maxwell_E_lf,eqn:maxwell_H_lf}, using a de Rham sequence of finite dimensional subspaces of the corresponding function spaces for the spatial discretization of $(p^{n + \frac{1}{2}}, E^{n + \frac{1}{2}}, H^{n + 1})$, we obtain the following discrete problem: find $(p_h^{n + \frac{1}{2}}, E_h^{n + \frac{1}{2}}, H_h^{n + 1}) \in U_h \times V_h \times W_h \subseteq \mathring{H}_{\varepsilon^{-1}}^1 \times \mathring{H}_{\varepsilon}(\curl; \Omega) \times \mathring{H}_{\mu}(\divgn; \Omega)$ such that:
\begin{subequations}
\begin{align}
  \aInnerproduct{\dfrac{p_h^{n + \frac{1}{2}} - p_h^{n - \frac{1}{2}}}{\Delta t}}{\widetilde{p}} - \aInnerproduct{\dfrac{\varepsilon}{2} \left( E_h^{n + \frac{1}{2}} + E_h^{n - \frac{1}{2}} \right)}{\nabla \widetilde{p}} &= \aInnerproduct{ f_p^{n}}{\widetilde{p}}, \label{eqn:maxwell_p_lf_full} \\
  \aInnerproduct{\dfrac{1}{2}\nabla \left(p_h^{n + \frac{1}{2}} + p_h^{n - \frac{1}{2}} \right)}{\widetilde{E}} + \aInnerproduct{\varepsilon \dfrac{E_h^{n + \frac{1}{2}} - E_h^{n - \frac{1}{2}}}{\Delta t}}{\widetilde{E}} - \aInnerproduct{\dfrac{1}{2} \left( H_h^{n + 1} + H_h^n \right)}{\nabla \times \widetilde{E}} &= \aInnerproduct{f_E^n}{\widetilde{E}}, \label{eqn:maxwell_E_lf_full} \\
  \aInnerproduct{\mu \dfrac{H_h^{n + 1} - H_h^n}{\Delta t}}{\widetilde{H}} +  \aInnerproduct{\dfrac{1}{2}  \nabla \times  \left( E_h^{n + \frac{1}{2}} +E_h^{n - \frac{1}{2}} \right)}{\widetilde{H}} &= \aInnerproduct{f_H^{n + \frac{1}{2}}}{\widetilde{H}}, \label{eqn:maxwell_H_lf_full}
\end{align}
\end{subequations}
for all $(\widetilde{p}, \widetilde{E}, \widetilde{H}) \in U_h \times V_h \times W_h$, and for $n \in \{1, \dotso, N - 1\}$. 
The $n = 0$ bootstrapping as in Equations~\labelcref{eqn:maxwell_p0_lf,eqn:maxwell_E0_lf,eqn:maxwell_H0_lf} leads to the discrete problem: find $(p_h^{\frac{1}{2}}, E_h^{\frac{1}{2}}, H_h^1) \in U_h \times V_h \times W_h \subseteq \mathring{H}_{\varepsilon^{-1}}^1 \times \mathring{H}_{\varepsilon}(\curl; \Omega) \times \mathring{H}_{\mu}(\divgn; \Omega)$ such that:
\begin{subequations}
  \begin{align}
    \aInnerproduct{\dfrac{p_h^{\frac{1}{2}} - p_h^0}{\Delta t/2}}{\widetilde{p}} -\dfrac{1}{4} \aInnerproduct{ \varepsilon \left(  E_h^{\frac{1}{2}} + E_h^0 \right)}{\nabla \widetilde{p}} & = \aInnerproduct{ f_p^0}{\widetilde{p}}, \label{eqn:maxwell_p0_lf_full} \\
\dfrac{1}{4}  \aInnerproduct{\nabla \left( p_h^{\frac{1}{2}} + p_h^0 \right)}{\widetilde{E}} + \aInnerproduct{\varepsilon \dfrac{E_h^{\frac{1}{2}} - E_h^0}{\Delta t/2}}{\widetilde{E}} - \aInnerproduct{\dfrac{1}{2} \left( H_h^1 + H_h^0 \right)}{\nabla \times \widetilde{E}} &= \aInnerproduct{f_E^0}{\widetilde{E}}, \label{eqn:maxwell_E0_lf_full} \\
  \aInnerproduct{\mu \dfrac{H_h^1 - H_h^0}{\Delta t}}{\widetilde{H}} +  \dfrac{1}{4} \aInnerproduct{\nabla \times \left( E_h^{\frac{1}{2}} + E_h^0 \right)}{\widetilde{H}} &= \aInnerproduct{f_H^{\frac{1}{2}}}{\widetilde{H}}, \label{eqn:maxwell_H0_lf_full}
\end{align}
\end{subequations}
for all $(\widetilde{p}, \widetilde{E}, \widetilde{H}) \in U_h \times V_h \times W_h$ given $(p_h^0, E_h^0, H_h^0) \in U_h \times V_h \times W_h$. 

Using the same projection operators $\Pi_h^0, \; \Pi_h^1$ and  $\Pi_h^2$ for $p$, $E$, and $H$, respectively as described in Section~\ref{sec:cn_full_error} for the error estimate there, we can again define the errors for $p$, $E$ and $H$ here as follows:
\begin{alignat}{2}
  e_{p_h}^{n + \frac{1}{2}} &\coloneq p(t^{n + \frac{1}{2}}) - p_h^{n + \frac{1}{2}} &&= \eta^{n+\frac{1}{2}} - \eta_h^{n+\frac{1}{2}}, \label{eqn:p_fullerror_lf} \\
  e_{E_h}^{n + \frac{1}{2}} &\coloneq E(t^{n + \frac{1}{2}}) - E_h^{n + \frac{1}{2}} &&= \zeta^{n + \frac{1}{2}} - \zeta_h^{n + \frac{1}{2}}, \label{eqn:E_fullerror_lf} \\
  e_{H_h}^n &\coloneq H(t^n) - H_h^n &&= \xi^n - \xi_h^n, \label{eqn:H_fullerror_lf}
\end{alignat}
and in which we have the following definitions for the newly introduced terms:
\begin{alignat}{3}
  \eta^{n + \frac{1}{2}} &\coloneq p(t^{n + \frac{1}{2}}) - \Pi_h^0 p(t^{n + \frac{1}{2}}), &&\qquad \eta_h^{n + \frac{1}{2}} &&\coloneq p_h^{n + \frac{1}{2}}  - \Pi_h^0 p(t^{n + \frac{1}{2}}), \label{eqn:p_fullerror_sub_lf} \\
  \zeta^{n + \frac{1}{2}} &\coloneq E(t^{n + \frac{1}{2}}) - \Pi_h^1 E(t^{n + \frac{1}{2}}), &&\qquad \zeta_h^{n + \frac{1}{2}} &&\coloneq E_h^{n + \frac{1}{2}} - \Pi_h^1 E(t^{n + \frac{1}{2}}), \label{eqn:E_fullerror_sub_lf} \\
  \xi^n &\coloneq H(t^n) - \Pi_h^2 H(t^n), &&\qquad \xi_h^n &&\coloneq H_h^n - \Pi_h^2 H(t^n). \label{eqn:H_fullerror_sub_lf}
\end{alignat}

\begin{theorem}[Full Error Estimate]\label{thm:full_error_estmt_lf}
Let $p \in C^3[0, T] \times \mathring{H}^1_{\varepsilon^{-1}}(\Omega)$, $E \in C^3[0, T] \times \mathring{H}_{\varepsilon}(\curl; \Omega)$, and $H \in C^3[0, T] \times \mathring{H}_{\mu}(\divgn; \Omega)$ be the solution to the variational formulation of the Maxwell's equations as in Equations~\labelcref{eqn:maxwell_p_wf,eqn:maxwell_E_wf,eqn:maxwell_H_wf}, and let $(p_h^{n + \frac{1}{2}}, E_h^{n + \frac{1}{2}}, H_h^{n + 1})$ be the solution of the fully discretized Maxwell's equations using the implicit leapfrog scheme as in Equations~\labelcref{eqn:maxwell_p_lf_full,eqn:maxwell_E_lf_full,eqn:maxwell_H_lf_full,eqn:maxwell_p0_lf_full,eqn:maxwell_E0_lf_full,eqn:maxwell_H0_lf_full}. If the fixed time step $\Delta t > 0$ and the mesh parameter $h > 0$ are sufficiently small, then there exists a positive bounded constant $C$ independent of both $\Delta t$ and $h$ such that the following error estimate holds:
\[
  \norm{e_{p_h}^{N - \frac{1}{2}}}_{\varepsilon^{-1}} + \norm{e_{E_h}^{N - \frac{1}{2}}}_{\varepsilon} + \norm{e_{H_h}^N}_{\mu} \le C \left[ (\Delta t)^2 + h^r + h^r (\Delta t)^2 \right],
\]
where the finite element subspaces $U_h$, $V_h$ and $W_h$ are each spanned by their respective Whitney form basis of polynomial order $r \ge 1$.
\end{theorem}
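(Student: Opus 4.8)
The plan is to mirror the proof of Theorem~\ref{thm:full_error_estimate_cn}, importing the staggered-grid bookkeeping and the discrete Gronwall calibration from the proof of Theorem~\ref{thm:dscrt_error_estmt_lf}. First I would subtract the fully discrete leapfrog equations~\labelcref{eqn:maxwell_p_lf_full,eqn:maxwell_E_lf_full,eqn:maxwell_H_lf_full} from the remainder form of the weak formulation derived in the proof of Theorem~\ref{thm:dscrt_error_estmt_lf} (the system carrying the remainder terms $R_p^n$, $R_E^n$, $R_H^{n+\frac12}$ and $r_p^n$, $r_E^n$, $r_H^{n+\frac12}$ obtained from Taylor expansion about $t^n$ for $p,E$ and about $t^{n+\frac12}$ for $H$), and likewise subtract the bootstrap equations~\labelcref{eqn:maxwell_p0_lf_full,eqn:maxwell_E0_lf_full,eqn:maxwell_H0_lf_full} from their analogous remainder version. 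Into the resulting error identities I substitute the splitting $e_{p_h}^{n+\frac12} = \eta^{n+\frac12} - \eta_h^{n+\frac12}$, $e_{E_h}^{n+\frac12} = \zeta^{n+\frac12} - \zeta_h^{n+\frac12}$, $e_{H_h}^{n} = \xi^{n} - \xi_h^{n}$ from Equations~\labelcref{eqn:p_fullerror_lf,eqn:E_fullerror_lf,eqn:H_fullerror_lf}, keeping the discrete parts $\eta_h,\zeta_h,\xi_h$ on the left and moving the projection-error parts $\eta,\zeta,\xi$ to the right-hand side.

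Next I test with $\widetilde{p} = -2\Delta t\,\varepsilon^{-1}(\eta_h^{n+\frac12} + \eta_h^{n-\frac12})$, $\widetilde{E} = -2\Delta t(\zeta_h^{n+\frac12} + \zeta_h^{n-\frac12})$, $\widetilde{H} = -2\Delta t(\xi_h^{n+1} + \xi_h^{n})$ (and the corresponding half-weighted choices for the bootstrap step), exactly as in Theorem~\ref{thm:full_error_estimate_cn}. The inclusions $\nabla U_h \subseteq V_h$ and $\nabla\times V_h \subseteq W_h$ make the $\nabla\eta_h$ and $\nabla\times\zeta_h$ cross terms cancel upon adding the three equations, while $\aInnerproduct{\varepsilon^{-1}(\eta_h^{n+\frac12} - \eta_h^{n-\frac12})}{\eta_h^{n+\frac12} + \eta_h^{n-\frac12}}$ and its $\zeta_h$, $\xi_h$ analogues telescope into discrete energy differences. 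The projection-error contributions $\aInnerproduct{\varepsilon^{-1}(\eta^{n+\frac12} - \eta^{n-\frac12})}{\eta_h^{n+\frac12} + \eta_h^{n-\frac12}}$, and the like, are treated by writing $\eta^{n+\frac12} - \eta^{n-\frac12} = (I - \Pi_h^0)\bigl(p(t^{n+\frac12}) - p(t^{n-\frac12})\bigr)$, Taylor-expanding the time difference about $t^n$ as in Theorem~\ref{thm:dscrt_error_estmt_lf}, and then invoking Cauchy--Schwarz, AM--GM, Triangle, and the approximability estimates $\norm{(I - \Pi_h^0)v}_{\varepsilon^{-1}} \le C h^r \norm{v}_{\varepsilon^{-1}}$ with the analogues for $\Pi_h^1$, $\Pi_h^2$ --- the same manoeuvre used for the $\eta,\zeta,\xi$ terms in Theorem~\ref{thm:full_error_estimate_cn}.

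I then sum over $n = 1$ to $N-1$, add the bootstrap contribution, collect the $\norm{\eta_h}^2$, $\norm{\zeta_h}^2$, $\norm{\xi_h}^2$ terms on the left, and apply the discrete Gronwall inequality of Lemma~\ref{lemma:gronwall_dscrt} with the leapfrog calibration $\delta = \Delta t/(1 - \Delta t)$ and a fixed $\gamma_n$, requiring $\Delta t$ small enough (say $\Delta t < 1/6$) that $\gamma_n\delta < 1$; this produces an $\exp(4T)$-type prefactor independent of $\Delta t$ and $h$, exactly as in Theorem~\ref{thm:dscrt_enrgy_estmt_lf}. For the accumulated remainder sums I reuse the bounds $\sum_n \norm{R_p^n}^2_{\varepsilon^{-1}} \le \tfrac{(\Delta t)^3}{20}(\cdot)$ and the $\tfrac{(\Delta t)^3}{48}$ bounds for the $r$-terms already established in Theorem~\ref{thm:dscrt_error_estmt_lf}, and I apply the approximability estimates both to $\partial p/\partial t$ (giving an $h^{2r}$ contribution after $\Delta t\sum_n(\cdot)\le\int_0^T(\cdot)\,dt$) and to $R_p^n$ itself (giving an $h^{2r}(\Delta t)^4$ contribution). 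Using the regularity hypotheses $p,E,H \in C^3[0,T]\times(\cdot)$ to bound every resulting integral by a fixed constant, and taking $p_h^0 = \Pi_h^0 p_0$, $E_h^0 = \Pi_h^1 E_0$, $H_h^0 = \Pi_h^2 H_0$ so that $\eta_h^0 = \zeta_h^0 = \xi_h^0 = 0$, I obtain $\norm{\eta_h^{N-\frac12}}^2_{\varepsilon^{-1}} + \norm{\zeta_h^{N-\frac12}}^2_{\varepsilon} + \norm{\xi_h^{N}}^2_{\mu} \le \widetilde{C}\bigl[(\Delta t)^4 + h^{2r} + h^{2r}(\Delta t)^4\bigr]$. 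Square-rooting, using $1$-/$2$-norm equivalence, and finishing with the triangle inequality $\norm{e_{p_h}^{N-\frac12}}_{\varepsilon^{-1}} \le \norm{\eta^{N-\frac12}}_{\varepsilon^{-1}} + \norm{\eta_h^{N-\frac12}}_{\varepsilon^{-1}}$ together with $\norm{\eta^{N-\frac12}}_{\varepsilon^{-1}} \le C h^r$ (since $\norm{p(t^{N-\frac12})}$ is bounded via Theorem~\ref{thm:dscrt_enrgy_estmt_lf}) and the analogues for $\zeta,\xi$ yields the claimed estimate.

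The step I expect to be the main obstacle is purely organizational: threading the staggered half-integer and integer time indices cleanly through the bootstrap equations while ensuring the $\Delta t$-dependent Gronwall parameter $\delta = \Delta t/(1 - \Delta t)$ does not leak a $\Delta t$-dependence into the final constant $C$. This forces one to re-use verbatim the constant-chasing of Theorems~\ref{thm:dscrt_error_estmt_lf} and~\ref{thm:full_error_estimate_cn} rather than reprove it; a secondary bookkeeping point is keeping the half-index remainder notation $R_H^{n+\frac12}$, $r_H^{n+\frac12}$ consistent with Theorem~\ref{thm:dscrt_error_estmt_lf}. Once these are in place, the remainder is a routine transcription of the two earlier arguments.
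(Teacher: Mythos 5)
Your proposal follows essentially the same route as the paper's proof: subtract the fully discrete leapfrog system from the remainder form of the weak formulation, split the errors via the projections $\Pi_h^0, \Pi_h^1, \Pi_h^2$, test with the $-2\Delta t$-scaled discrete error sums, use the de Rham inclusions for cancellation, bound the projection-error and Taylor-remainder terms by Cauchy--Schwarz, AM--GM and the $h^r$ approximability estimates, sum (including the bootstrap step), apply discrete Gronwall, and finish with the $(\Delta t)^3/20$ and $(\Delta t)^3/48$ remainder bounds, the choice $p_h^0 = \Pi_h^0 p_0$ etc., and the final triangle inequality. The only deviation is cosmetic: the extra projection-error terms in the full discretization force a larger Gronwall coefficient than in the semidiscrete case (the paper ends up with the calibration $\Delta t < 1/18$ and prefactor $\exp(12T)$ rather than your $\exp(4T)$), which changes only inessential constants and not the claimed estimate.
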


\begin{proof}
  As with all our proofs for theorems corresponding to the implicit leapfrog scheme, our proof here will mimic that for Theorem~\ref{thm:full_error_estimate_cn} \textit{mutatis mutandis}. Nevertheless, we shall describe it to some detail next.

  We start from the variational formulation for the error terms $\eta$, $\zeta$, $\xi$, and their discrete analogues: 
\[
  \aInnerproduct{\dfrac{\left(\eta^{n + \frac{1}{2}} - \eta^{n - \frac{1}{2}}\right) - \left( \eta^{n + \frac{1}{2}}_h - \eta^{n - \frac{1}{2}}_h \right)}{\Delta t}}{\widetilde{p}}  - \dfrac{1}{2} \aInnerproduct{ \varepsilon \left( \left( \zeta^{n + \frac{1}{2}} + \zeta^{n - \frac{1}{2}} \right) - \left(\zeta_h^{n + \frac{1}{2}} + \zeta^{n + \frac{1}{2}}_h \right) \right)}{\nabla \widetilde{p}} =  \aInnerproduct{R_p^n - \varepsilon r_E^n}{\widetilde{p}},
\]
\vspace{-1em}\begin{multline*}
  \dfrac{1}{2} \aInnerproduct{\nabla \left( \left( \eta^{n + \frac{1}{2}} + \eta^{n - \frac{1}{2}} \right) - \left( \eta^{n + \frac{1}{2}}_h + \eta^{n - \frac{1}{2}}_h \right) \right)}{\widetilde{E}} + \aInnerproduct{\varepsilon \dfrac{\left( \zeta^{n + \frac{1}{2}} - \zeta^{n - \frac{1}{2}} \right) - \left(\zeta^{n + \frac{1}{2}}_h - \zeta^{n - \frac{1}{2}}_h \right)}{\Delta t}}{\widetilde{E}} \, - \\
  \dfrac{1}{2} \aInnerproduct{\left( \left( \xi^{n + 1} + \xi^{n} \right) - \left(\xi^{n + 1}_h - \xi^{n}_h \right) \right)}{\nabla \times \widetilde{E}} = \aInnerproduct{\varepsilon R_E^n + \nabla r_p^n - r_H^{n + \frac{1}{2}}}{\widetilde{E}},
\end{multline*}
\vspace{-1em}
\[
  \aInnerproduct{\mu \dfrac{\left(\xi^{n + 1} - \xi^{n} \right) - \left( \xi^{n + 1}_h - \xi^{n}_h \right)}{\Delta t}}{\widetilde{H}}  + \dfrac{1}{2} \aInnerproduct{\nabla \times \left( \left( \zeta^{n + \frac{1}{2}} + \zeta^{n - \frac{1}{2}} \right) - \left( \zeta^{n + \frac{1}{2}}_h + \zeta^{n - \frac{1}{2}}_h \right) \right)}{\widetilde{H}} = \aInnerproduct{\mu R_H^{n + \frac{1}{2}} + \nabla \times r_E^n}{\widetilde{H}}.
\]
Now, using appropriate test functions similar to their choice as in the proof of Theorem~\ref{thm:full_error_estimate_cn}, the exactness of the discrete de Rham sequence of finite element spaces, Equations~\labelcref{eqn:p_fullerror_sub_lf,eqn:E_fullerror_sub_lf,eqn:H_fullerror_sub_lf} Taylor's theorem with remainder, Cauchy-Schwarz, AM-GM, and Triangle inequalities, we obtain the following estimates:
\begin{multline*}
  2 \ainnerproduct{\varepsilon^{-1} \left( \eta^{n + \frac{1}{2}} - \eta^{n -\frac{1}{2}} \right)}{\eta^{n + \frac{1}{2}}_h + \eta_h^{n - \frac{1}{2}}} \le \Delta t \left[ \norm[\bigg]{(I - \Pi_h^0) \dfrac{\partial p}{\partial t}(t^n)}^2_{\varepsilon^{-1}} + \norm[\bigg]{(I - \Pi_h^0) R_p^n}^2_{\varepsilon^{-1}} \right] + \\
  4 \Delta t \Bigg[ \norm{\eta_h^{n + \frac{1}{2}}}^2_{\varepsilon^{-1}} +\norm{\eta_h^{n - \frac{1}{2}}}^2_{\varepsilon^{-1}} \Bigg],
\end{multline*}
\vspace{-1em} \begin{multline*}
  2 \ainnerproduct{\varepsilon \left(\zeta^{n + \frac{1}{2}} - \zeta^{n - \frac{1}{2}} \right)}{\zeta^{n + \frac{1}{2}}_h + \zeta_h^{n - \frac{1}{2}}} \le \Delta t \left[ \norm[\bigg]{(I - \Pi_h^1) \dfrac{\partial E}{\partial t}(t^n)}^2_{\varepsilon} + \norm[\bigg]{(I - \Pi_h^1) R_E^n}^2_{\varepsilon} \right] + \\
  4 \Delta t \Bigg[ \norm{\zeta_h^{n + \frac{1}{2}}}^2_{\varepsilon} +\norm{\zeta_h^{n - \frac{1}{2}}}^2_{\varepsilon} \Bigg],
\end{multline*}
\vspace{-1em} \begin{multline*}
  2 \ainnerproduct{\mu \left(\xi^{n + 1} - \xi^{n}\right)}{\xi^{n + 1}_h + \xi^n_h} \le \Delta t  \left[ \norm[\bigg]{(I - \Pi_h^2) \dfrac{\partial H}{\partial t}(t^{n + \frac{1}{2}})}^2_\mu + \norm[\bigg]{(I - \Pi_h^2) R_H^{n + \frac{1}{2}}}^2_{\mu} \right] + \\
  4 \Delta t \Bigg[ \norm{\xi_h^{n + 1}}^2_{\mu} + \norm{\xi_h^n}^2_{\mu} \Bigg].
\end{multline*}
Similarly, we will have an estimate for the errors for the initial $p^{\frac{1}{2}}$, $E^{\frac{1}{2}}$ and $H^1$. Combining these estimates and invoking the necessary arguments as in the proof of Theorem~\ref{thm:full_error_estimate_cn}, we arrive at the following inequality:
\begin{multline*}
  \norm{\eta_h^{N - \frac{1}{2}}}^2_{\varepsilon^{-1}} + \norm{\zeta_h^{N - \frac{1}{2}}}^2_{\varepsilon} + \norm{\xi_h^N}^2_{\mu} \le \\
  \dfrac{1 + 3 \Delta t}{1 - 3 \Delta t} \Bigg[ \norm{\eta_h^0}^2_{\varepsilon^{-1}} + \norm{\zeta_h^0}^2_{\varepsilon} + \norm{\xi_h^0}^2_{\mu} \Bigg] + \dfrac{6 \Delta t}{1 - 3\Delta t} \sum\limits_{n = 0}^{N - 1} \Bigg[ \norm{\eta_h^{n + \frac{1}{2}}}^2_{\varepsilon^{-1}} + \norm{\zeta_h^{n + \frac{1}{2}}}^2_{\varepsilon} +  \norm{\xi_h^{n + 1}}^2_{\mu} \Bigg] + \\
  \dfrac{ \Delta t}{1 - 3 \Delta t} \sum\limits_{n = 0}^{N - 1} \Bigg[ \norm[\bigg]{(I - \Pi_h^0) \dfrac{\partial p}{\partial t}(t^n)}^2_{\varepsilon^{-1}} + \norm[\bigg]{(I - \Pi_h^0) R_p^n}^2_{\varepsilon^{-1}} + \norm[\bigg]{(I - \Pi_h^1) \dfrac{\partial E}{\partial t}(t^n)}^2_{\varepsilon} + \\
    \norm[\bigg]{(I - \Pi_h^1) R_E^n}^2_{\varepsilon} + \norm[\bigg]{(I - \Pi_h^2) \dfrac{\partial H}{\partial t}(t^{n + \frac{1}{2}})}^2_\mu + \norm[\bigg]{(I - \Pi_h^2) R_H^{n + \frac{1}{2}}}^2_{\mu} + \norm{R_p^n}^2_{\varepsilon^{-1}} + \norm{R_E^n}^2_{\varepsilon} + \norm{R_H^{n + \frac{1}{2}}}^2_{\mu} + \\
\norm{\nabla r_p^n} ^2_{\varepsilon^{-1}} + \norm{r_E^n}^2_{\varepsilon} + \varepsilon^{-1} \mu^{-1} \norm{\nabla \times r_E^n}^2_{\varepsilon} +  \varepsilon^{-1} \mu^{-1} \norm{r_H^{n + \frac{1}{2}}}^2_{\mu} \Bigg].
\end{multline*}
Now, applying the discrete Gronwall inequality with $\Delta t < 1/18$, we get that:
\begin{multline*}
  \norm{\eta_h^{N - \frac{1}{2}}}^2_{\varepsilon^{-1}} + \norm{\zeta_h^{N - \frac{1}{2}}}^2_{\varepsilon} + \norm{\xi_h^N}^2_{\mu} \le \Bigg[ \dfrac{6 \Delta t}{5} \sum\limits_{n = 0}^{N - 1} \Bigg( \norm[\bigg]{(I - \Pi_h^0) \dfrac{\partial p}{\partial t}(t^n)}^2_{\varepsilon^{-1}} + \norm[\bigg]{(I - \Pi_h^0) R_p^n}^2_{\varepsilon^{-1}} + \\
  \norm[\bigg]{(I - \Pi_h^1) \dfrac{\partial E}{\partial t}(t^n)}^2_{\varepsilon}  + \norm[\bigg]{(I - \Pi_h^1) R_E^n}^2_{\varepsilon} + \norm[\bigg]{(I - \Pi_h^2) \dfrac{\partial H}{\partial t}(t^{n + \frac{1}{2}})}^2_\mu + \norm[\bigg]{(I - \Pi_h^2) R_H^{n + \frac{1}{2}}}^2_{\mu} + \\
  \norm{R_p^n}^2_{\varepsilon^{-1}} + \norm{R_E^n}^2_{\varepsilon} + \norm{R_H^{n + \frac{1}{2}}}^2_{\mu} + \norm{\nabla r_p^n} ^2_{\varepsilon^{-1}} + \norm{r_E^n}^2_{\varepsilon} + \varepsilon^{-1} \mu^{-1} \norm{\nabla \times r_E^n}^2_{\varepsilon} +  \varepsilon^{-1} \mu^{-1} \norm{r_H^{n + \frac{1}{2}}}^2_{\mu}\Bigg) + \\ \dfrac{7}{5} \Big( \norm{\eta_h^0}^2_{\varepsilon^{-1}} + \norm{\zeta_h^0}^2_{\varepsilon} + \norm{\xi_h^0}^2_{\mu} \Big) \Bigg] \exp \left( 12 T \right).
\end{multline*}
Using appropriate estimates for the right hand side terms in this equation as in the proof of Theorem~\ref{thm:full_error_estimate_cn}, and by setting $p_h^0 \coloneq \Pi_h^0 p_0$, $E_h^0 \coloneq \Pi_h^1 E_0$, and $H_h^0 \coloneq \Pi_h^2 \mathring{H}$, we finally obtain our required result:
\begin{align*}
  \norm{e_{p_h}^{N - \frac{1}{2}}}_{\varepsilon^{-1}} + \norm{e_{E_h}^{N - \frac{1}{2}}}_{\varepsilon} + \norm{e_{H_h}^{N}}_{\mu} &= \norm{\eta^{N - \frac{1}{2}} - \eta^{N - \frac{1}{2}}_h}_{\varepsilon^{-1}} + \norm{\zeta^{N - \frac{1}{2}} - \zeta^{N - \frac{1}{2}}_h}_{\varepsilon} + \norm{\xi^N - \xi^N_h}_\mu \\
  &\le C \left[ (\Delta t )^2 + h^r + h^r (\Delta t)^2 \right]. \qedhere
\end{align*}
\end{proof}

\subsection{Nonhomogeneous Boundary Conditions}

We wish to close our theoretical discussion by reiterating that we have shown all our stability and error estimates for the homogenous boundary conditions as in Equation~\eqref{eqn:BCs} for the system of Maxwell's equations in Equation~\eqref{eqn:maxwells_eqns}. However, standard arguments can be used if these boundary conditions are nonhomogeneous and all our proofs can be suitably updated to obtain essentially the same stability and error convergence results for the full discretization using both time integration schemes. Consequently, in our next section, we do indeed demonstrate computed solutions that well approximate the true solutions and have the discrete energy conservation property, and these are for model problems which have analytical energy conservation in $\R^2$ and $\R^3$ for both homogenous and nonhomogeneous boundary conditions.

\subsubsection{Remark on Energy Conservation}

An important caveat however needs to be stated with regard to Corollary~\labelcref{corr:smth_enrgy_cnsrvtn} and therefore also Corollaries~\labelcref{corr:dscrt_enrgy_cnsrvtn_cn,corr:dscrt_enrgy_cnsrvtn_lf}. A nonhomogeneous time varying boundary condition can act as a source of energy and drive time evolution of the pressure, electric and magnetic fields even when all right hand side forcing functions are zero in Equation~\eqref{eqn:maxwells_eqns}. Thus, some additional technical qualifications are necessary to warrant energy conservation for such cases. We do not know what this precise technical quantification is and leave it to a future work and reaffirm that we do not need it here for our results. Towards this end, we provide examples in the next section wherein the time evolution of the Maxwell's system is driven only by initial and nonhomogeneous boundary conditions but for which the energy varies over time in a bounded manner.

\section{Numerical Results}\label{sec:numerics}

We now present some empirical validation for our theoretical results for two model problems each on the unit square in $\R^2$ in Examples 1 and 2, and and on the unit cube $\R^3$ in Examples 3 and 4. Both our problem domains are discretely realized as simplicial meshes. Our unit square has $1903$ vertices and $3600$ triangles while the unit cube has $215$ vertices and $560$ tetrahedra. We performed all our computational experiments with linear and quadratic finite element spaces of Whitney forms but only provide plots of solutions for the quadratic case in $\R^2$ and for both choices in $\R^3$. For all our example problems, we provide the analytical solutions for $p$, $E$ and $H$, the physical parameters $\epsilon$ and $\mu$, and the initial and final times $T_{\min}$ and $T_{\max}$, respectively. For all these problems, $\Delta t = 0.01$. All our boundary conditions are essentially applied in our computations and the initial conditions are discretely obtained through $L^2$ projections of the analytical solutions at $t = 0$. All integrals in our computations are obtained using sufficiently high degree quadrature, and all linear systems are solved with a sparse direct solver. Our experiments are performed using code written in Python with the standard scientific Python stack consisting of \texttt{NumPy}, \texttt{SciPy} and \texttt{Matplotlib}, and also in part thanks to some utility support using the library \texttt{PyDEC}~\cite{BeHi2012}.

All our problem regions are simply connected domains, that is, with trivial relative homologies in all dimensions. Our successive examples are, in spirit, an attempt to provide an ``anti-ablation''-like study, that is, we change only one aspect of the problem while holding others fixed. We have performed computations with other general analytical functions in two and three dimensions as well and on domains with nontrivial $1$- or $1$- and $2$-homologies as the case may be in $\R^2$ and $\R^3$, respectively, but we do not present any of those here. We  believe that our presented examples suffice for our purposes of illustration of feasibility and validation of our theory.

\medskip \noindent \textbf{Example 1}: This problem consists of Maxwell's equations posed on a unit square in $\R^2$ with analytical solutions, material parameters, initial and final times as shown below:
\[
  p = 0, \quad E = %
  \begin{bNiceMatrix}
    \sin \pi y \cos \pi t \\
    \sin \pi x \cos \pi t
  \end{bNiceMatrix}, %
  \quad H = (\cos \pi y - \cos \pi x) \sin \pi t,
\]
\[\epsilon = 1, \quad \mu = 1, \quad T_{\min} = 0 , \quad T_{\max} = 2.\]
The boundary conditions for this problem are homogeneous for $p$ and $E$. $H$ here also has a zero boundary condition since the analytical functions shown here are all technically the vector calculus realizations of differential $k$-forms in $\R^2$. Therefore, for the $2$-form $H$, in particular, the boundary condition is the pullback of this $2$-form under the inclusion of the boundary into the domain and is thus vacuously zero. The results of computations performed using quadratic Whitney elements as bases for each of the finite element spaces and with the two time discretizations are shown in Figures~\labelcref{fig:example1_cn,fig:example1_lf}. We note that the homogenous boundary conditions are all imposed essentially by incorporating them into the finite element spaces. We also performed computations using the backward Euler time discretization but only provide a plot of its energy as in Figure~\ref{fig:2d_energies}.


\medskip \noindent \textbf{Example 2}: This problem is posed in $\R^3$ on a unit cube and analytical solutions, material parameters, initial and final times as below:
\[
  p = 0, \quad E = %
  \begin{bNiceMatrix}
   \sin \pi y \sin \pi z \cos \pi t \\
   \sin \pi x \sin \pi z \cos \pi t \\
   \sin \pi x \sin \pi y \cos \pi t 
  \end{bNiceMatrix}, %
  \quad H = %
   \begin{bNiceMatrix}
      \sin \pi x (\cos \pi z - \cos \pi y) \sin \pi t \\
      \sin \pi y (\cos \pi x - \cos \pi z) \sin \pi t \\
      \sin \pi z (\cos \pi y - \cos \pi x) \sin \pi t
   \end{bNiceMatrix},
\]
\[\epsilon = 2, \quad \mu = 1, \quad T_{\min} = 0 , \quad T_{\max} = 2.\]
Like in Example 1, the boundary conditions are homogeneous for $p$, $E$ and $H$ and are essentially imposed. The results of computations performed using linear and quadratic Whitney elements and with our two time discretizations are shown in Figures~\labelcref{fig:example2_cn,fig:example2_lf}. The energy computations are summarized in Figure~\ref{fig:3d_energies}.

\medskip \noindent \textbf{Example 3}: The problem here consists of nonhomogeneous boundary conditions for $p$ and $E$, and homogeneous for $H$ with analytical solutions, material parameters, initial and final times as shown below:
\[
  p = \left(\cos \pi x + \cos \pi y\right) \sin \pi t,
\]
\[
 E = %
  \begin{bNiceMatrix}
    \sin \pi (\sqrt{2} t - x - y) - \sin \pi x \cos \pi t \\
    -\sin \pi (\sqrt{2} t - x - y) - \sin \pi y \cos \pi t
  \end{bNiceMatrix}, %
  \quad H = -\sqrt{2} \sin \pi (\sqrt{2} t - x - y),
\]
\[\epsilon = 1, \quad \mu = 1, \quad T_{\min} = 0 , \quad T_{\max} = 2.\]
All boundary conditions for this finite element discretization are again imposed in an essential manner by incorporating them into the appropriate function spaces. The results of computations performed with quadratic Whitney elements are shown in Figures~\labelcref{fig:example3_cn,fig:example3_lf}, and the energies are visually summarized in Figure~\ref{fig:2d_energies}.

\medskip \noindent \textbf{Example 4}: Our next example is on the unit cube in $\R^3$ with analytical solutions, material parameters, initial and final times as below:
\[
  p = \left(\cos \pi x + \cos \pi y\right) \sin \pi t,
\]
\[
  E = %
  \begin{bNiceMatrix}
    \sin \pi (\sqrt{2} t - x - y) - \sin \pi x \cos \pi t \\
    -\sin \pi (\sqrt{2} t - x - y) - \sin \pi y \cos \pi t \\
    0
  \end{bNiceMatrix}, %
  \quad H = %
  \begin{bNiceMatrix}
    0 \\ 
    0 \\
    -\sqrt{2} \sin \pi (\sqrt{2} t - x - y)
   \end{bNiceMatrix},
\]
\[\epsilon = 1, \quad \mu = 1, \quad T_{\min} = 0 , \quad T_{\max} = 2.\]
Similar to Example 2, the boundary conditions for $p$ and $E$ are nonhomogeneous everywhere on the boundary while for $H$ it is nonhomogeneous in some parts of the boundary. The computational results are shown in Figures~\labelcref{fig:example4_cn,fig:example4_lf,fig:3d_energies}.

\medskip \noindent \textbf{Example 5}: Our next example is on the unit square in $\R^2$ with analytical solutions, material parameters, initial and final times as below:
\[
  p = \left(\cos x + \cos y\right) \sin t,
\]
\[
  E = %
  \begin{bNiceMatrix}
    \sin \pi (\sqrt{2} t - x - y) - \sin x \cos t \\
    -\sin \pi (\sqrt{2} t - x - y) - \sin y \cos t
  \end{bNiceMatrix}, %
  \quad H = -\sqrt{2} \sin \pi (\sqrt{2} t - x - y),
\]
\[\epsilon = 1, \quad \mu = 1, \quad T_{\min} = 0 , \quad T_{\max} = 3.14.\]

\medskip \noindent \textbf{Example 6}: Our final example is on the unit cube in $\R^3$ with analytical solutions, material parameters, initial and final times as below:
\[
  p = 0, \quad
  E = %
  \begin{bNiceMatrix}
   \sin y \sin z \cos t \\
   \sin x \sin z \cos t \\
   \sin x \sin y \cos t 
  \end{bNiceMatrix}, %
  \quad H = %
   \begin{bNiceMatrix}
      \sin x (\cos z - \cos y) \sin t \\
      \sin y (\cos x - \cos z) \sin t \\
      \sin z (\cos y - \cos x) \sin t
   \end{bNiceMatrix},
\]
\[\epsilon = 2, \quad \mu = 1, \quad T_{\min} = 0 , \quad T_{\max} = 3.\]

\subsection{Discussion}

For all problems in our examples, the right hand side functions for the stated analytical solutions are all zero in Equation~\eqref{eqn:maxwells_eqns}. We can readily obtain the analytical energies ($\norm{p}^2_{\epsilon^{-1}} + \norm{E}^2_\epsilon + \norm{H}^2_\mu$) for each of these problems by integration as:
\begin{center}
\begin{tabular}{|c|c|c|c|c|c|c|}
\hline
& Example 1 & Example 2 & Example 3 & Example 4 & Example 5 & Example 6 \\
\hline
\text{Energy at time } $t$ & $1$ & $\dfrac{3}{2}$ & $3$ & $3$ & $3 + \mathcal{E}_1(t)$ & $\dfrac{3}{2}  + \mathcal{E}_2(t)$ \\
\hline
\end{tabular}
  $\mathcal{E}_1(t) = -\dfrac{1}{2} \sin 2 + \left( 2 \sin 1 + \sin 2 \right) \sin^2 t, \quad  \mathcal{E}_2(t)  = \dfrac{3}{8} \sin^2 2 \cos 2t + \dfrac{3}{2} \sin 2 (\sin^2 1 \sin^2 t - \cos^2 t)$.
\end{center}
Consequently, the energies of the discrete solutions computed using the Crank-Nicholson and implicit leapfrog schemes are indeed conserved for the various problems in Examples 1 to 4 and as can be seen in Figures~\labelcref{fig:2d_energies,fig:3d_energies}. For a numerical check on the consistency of our computations we verify that for each of the four examples, the backward Euler time discretization is dissipative.

With respect to the choice of higher order finite elements, indeed choosing quadratic Whitney finite elements does lead to a better approximation for the various solutions $p_h^n$, $E_h^n$ and $H_h^n$ at the appropriate time indices and this is also evident from the solution plots which we have discussed. However, this effect is more pronounced in $\R^3$, and therefore we provide the linear and quadratic spatial discretization solutions in Figures~\labelcref{fig:example2_cn,fig:example2_lf,fig:example4_cn,fig:example4_lf}.

\subsection{Reproducibility} All our code for the various examples in \Cref{sec:numerics} is available via GitHub \cite{ArKa2024}.

\vspace*{\fill}
\begin{figure}[h]
  \centering
  \includegraphics[scale=1.2]{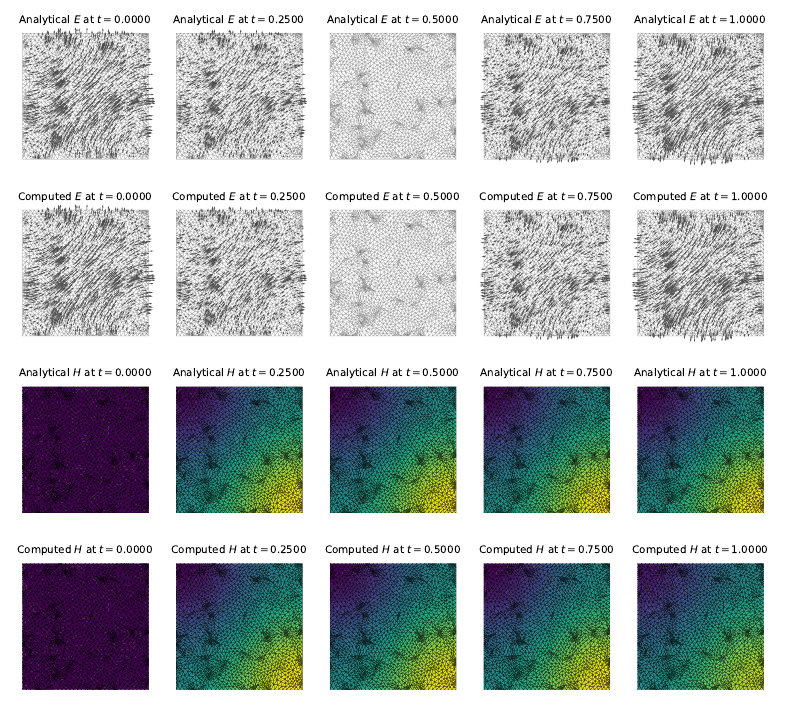}
  \caption{\textbf{Quadratic finite elements, Crank-Nicholson}: Plots of solutions at different time steps for the problem described in \textbf{Example 1} of Section~\ref{sec:numerics} using the Crank-Nicholson time integration and quadratic Whitney forms as basis for the finite dimensional finite element spaces. The solutions for $p$ are not shown due to them being identically equal to $0$. The computed solutions for $E$ and $H$ visually match with the analytical solutions near identically.}
  \label{fig:example1_cn}
\end{figure}
\vspace{\fill}
\clearpage

\vspace*{\fill}
\begin{figure}[h]
  \centering
  \includegraphics[scale=1.2]{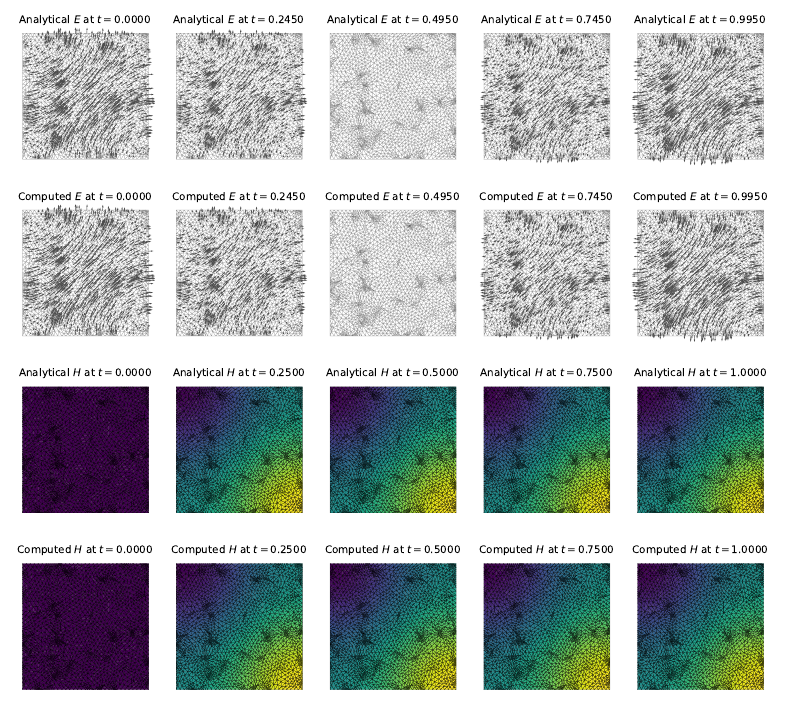}  
  \caption{\textbf{Quadratic finite elements, implicit leapfrog}: Plots of solutions at different time steps for the problem described in \textbf{Example 1} of Section~\ref{sec:numerics} using the implicit leapfrog scheme and quadratic Whitney forms as basis for the finite element spaces. The computed solutions for $E$ and $H$ match with the analytical solutions near identically.}
  \label{fig:example1_lf}
\end{figure}
\vspace{\fill}
\clearpage

\vspace{\fill}
\begin{figure}[h]
  \centering
  \includegraphics[scale=1.2]{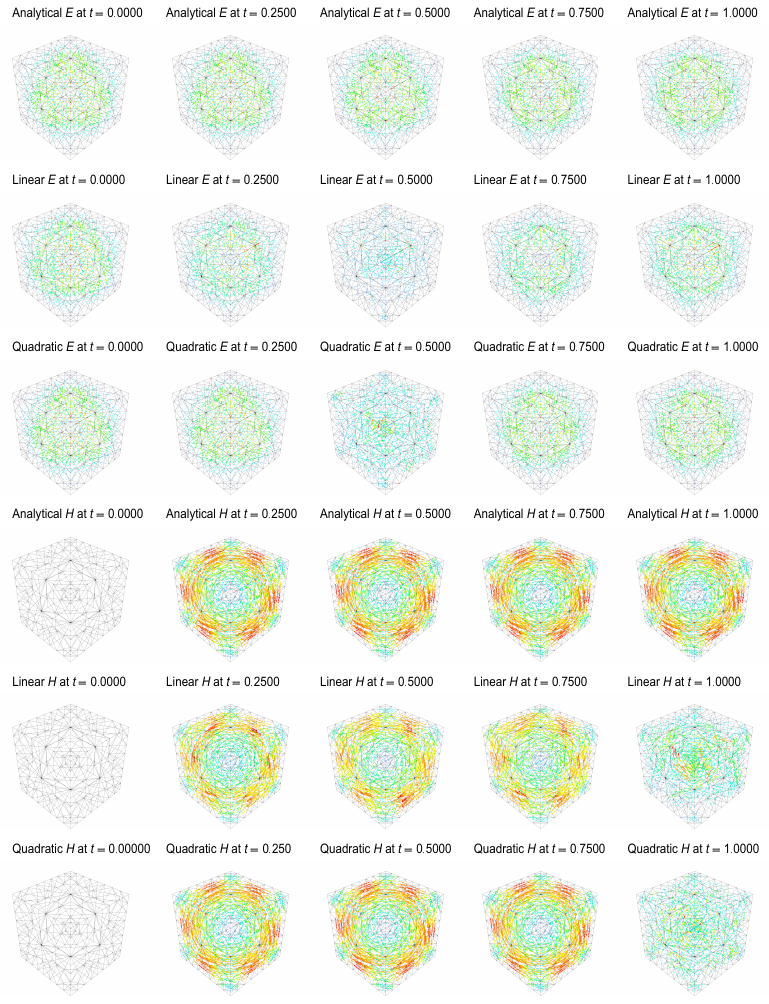}
  \caption{\textbf{Linear and Quadratic finite elements, Crank-Nicholson}: Plots of solutions at different time steps for the problem described in \textbf{Example 2} of Section~\ref{sec:numerics} using the Crank-Nicholson time integration, and with both linear and quadratic Whitney forms as basis for the finite dimensional finite element spaces. The solutions computed for $E$ and $H$ using the quadratic elements match with the analytical solutions near identically while those computed with the linear elements have a somewhat pronounced approximation error.}
  \label{fig:example2_cn}
\end{figure}
\vspace{\fill}
\clearpage

\vspace{\fill}
\begin{figure}[h]
  \centering
  \includegraphics[scale=1.2]{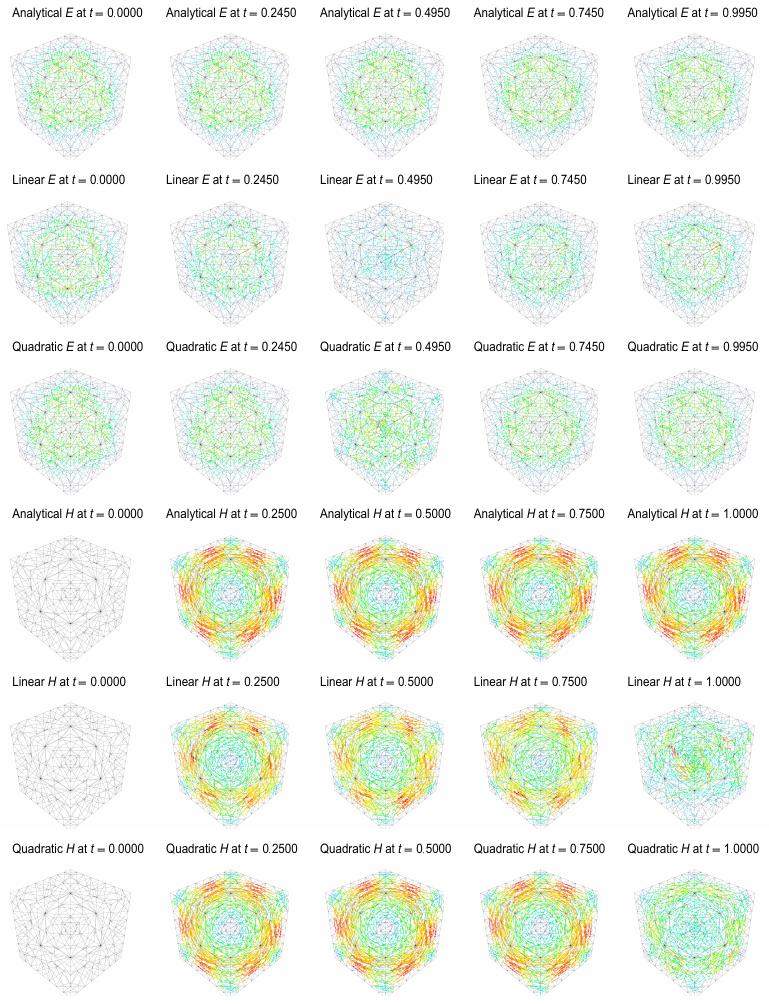}
  \caption{\textbf{Linear and Quadratic finite elements, implicit leapfrog}: Plots of solutions at different time steps for the problem described in \textbf{Example 2} of Section~\ref{sec:numerics} using the implicit leapfrog time integration, and with both linear and quadratic Whitney forms as basis for the finite element spaces. The solutions computed for $E$ and $H$ using the quadratic elements match with the analytical solutions near identically while those computed with the linear elements have a perceptible approximation error.}
  \label{fig:example2_lf}
\end{figure}
\vspace{\fill}
\clearpage

\vspace{\fill}
\begin{figure}[h]
  \centering
  \includegraphics[scale=1.2]{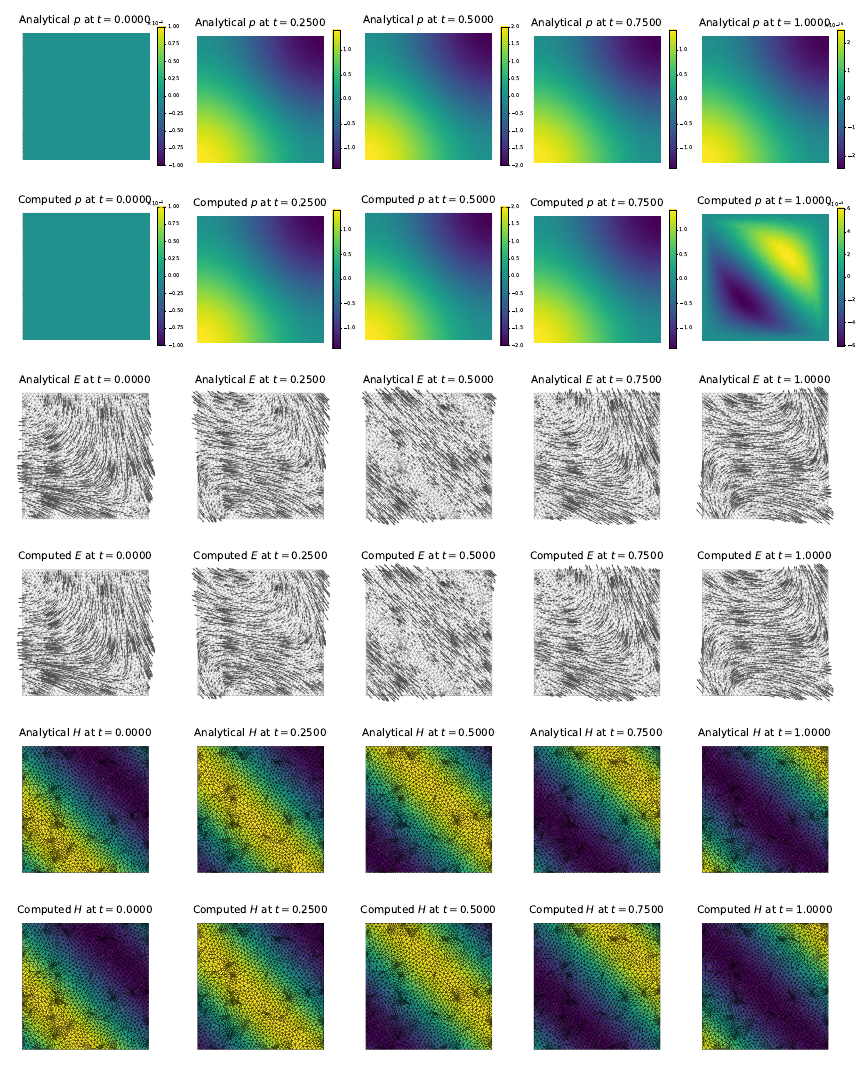}
  \caption{\textbf{Quadratic finite elements, Crank-Nicholson}: Plots of solutions at different time steps for the problem described in \textbf{Example 3} of Section~\ref{sec:numerics} using the Crank-Nicholson time integration and quadratic Whitney forms as basis for the finite element spaces. The computed solutions for $p$, $E$ and $H$ match with the analytical solutions near identically.}
  \label{fig:example3_cn}
\end{figure}
\vspace{\fill}
\clearpage

\vspace{\fill}
\begin{figure}[h]
  \centering
  \includegraphics[scale=1.2]{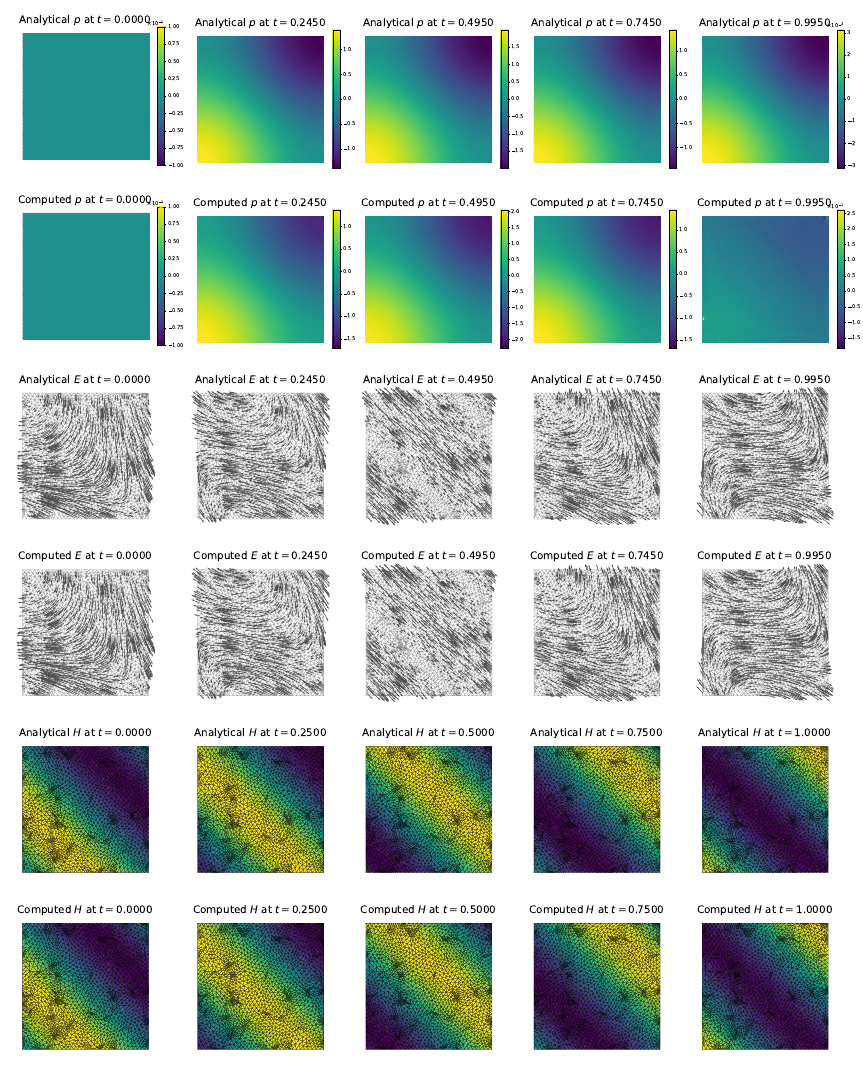}
  \caption{\textbf{Quadratic finite elements, implicit leapfrog}: Plots of solutions at different time steps for the problem described in \textbf{Example 3} of Section~\ref{sec:numerics} using the implicit leapfrog time integration and quadratic Whitney forms. The computed solutions for $p$, $E$ and $H$ match with the analytical solutions near identically.}
  \label{fig:example3_lf}
\end{figure}
\vspace{\fill}
\clearpage

\vspace{\fill}
\begin{figure}[h]
  \centering
  \includegraphics[scale=1.2]{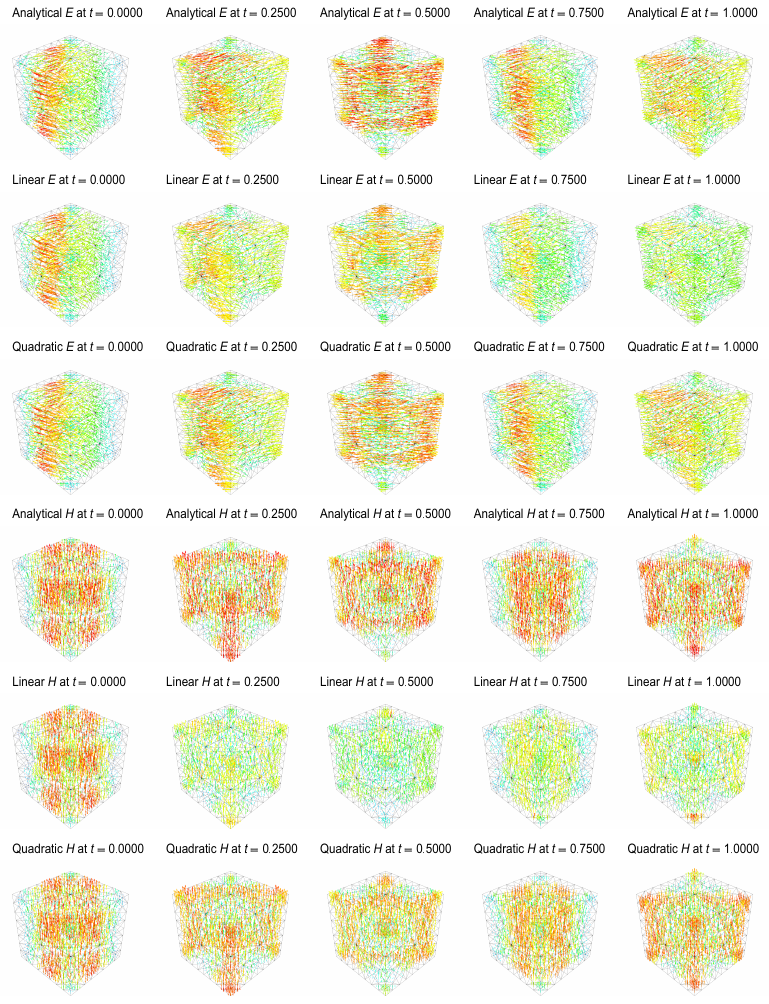}
  \caption{\textbf{Linear and Quadratic finite elements, Crank-Nicholson}: Plots of solutions at different time steps for the problem described in \textbf{Example 4} of Section~\ref{sec:numerics} using the Crank-Nicholson time integration, and with linear and quadratic Whitney forms. The solutions computed for $E$ and $H$ using the quadratic elements match with the analytical solutions near identically while those computed with the linear elements have a perceptible approximation error.}
  \label{fig:example4_cn}
\end{figure}
\vspace{\fill}
\clearpage

\vspace{\fill}
\begin{figure}[h]
  \centering
  \includegraphics[scale=1.2]{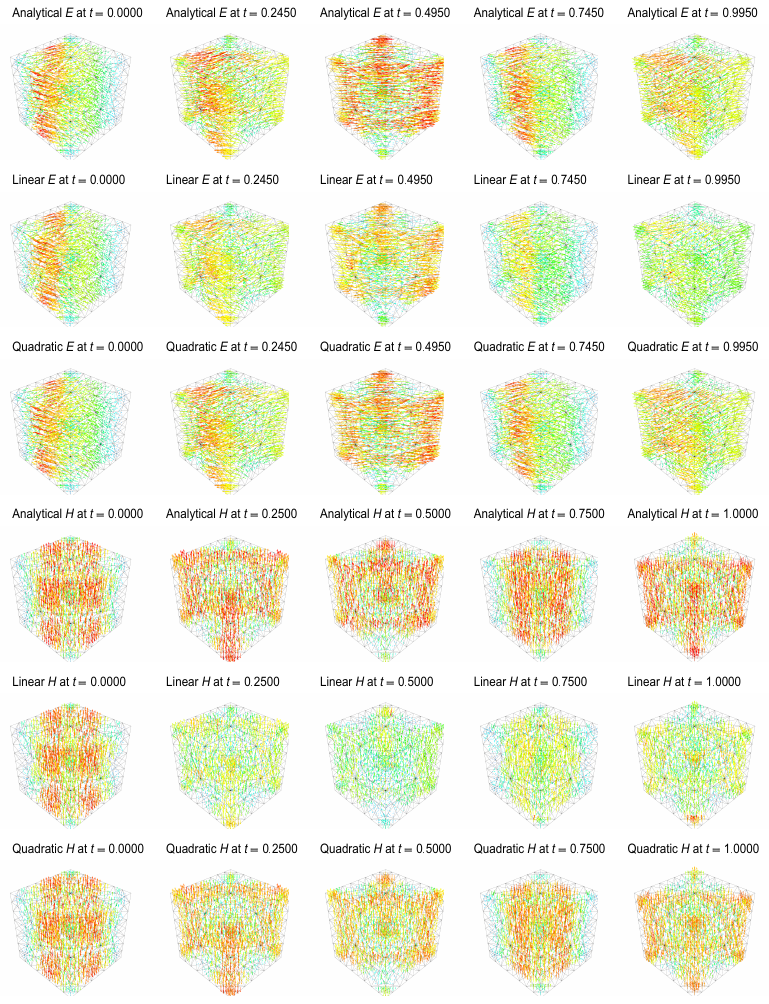}
  \caption{\textbf{Linear and Quadratic finite elements, implicit leapfrog}: Plots of solutions at different time steps for the problem described in \textbf{Example 4} of Section~\ref{sec:numerics} using the implicit leapfrog time integration with linear and quadratic Whitney basis. The solutions computed for $E$ and $H$ using the quadratic elements match with the analytical solutions near identically while some approximation errors are evident for the computations with the linear basis.}
  \label{fig:example4_lf}
\end{figure}
\vspace{\fill}
\clearpage

\vspace*{\fill}
\begin{figure}[h]
  \centering
  \begin{subfigure}[t]{0.48\linewidth}
    \centering \includegraphics[scale=0.5]{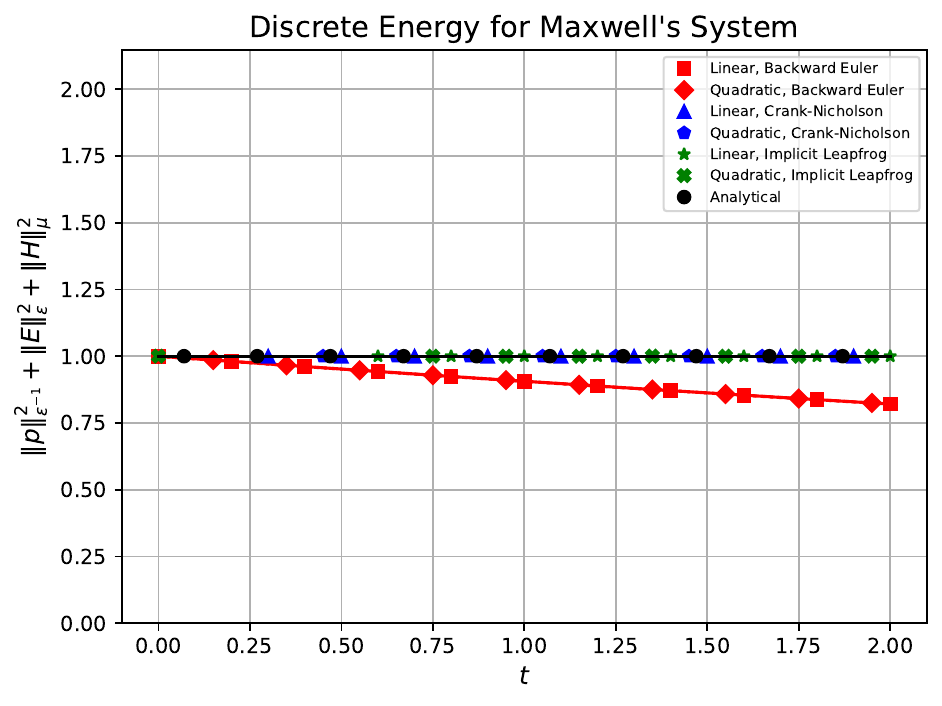}
  \end{subfigure}
  \hspace{0.025\linewidth}
  \begin{subfigure}[t]{0.48\linewidth}
    \centering \includegraphics[scale=0.5]{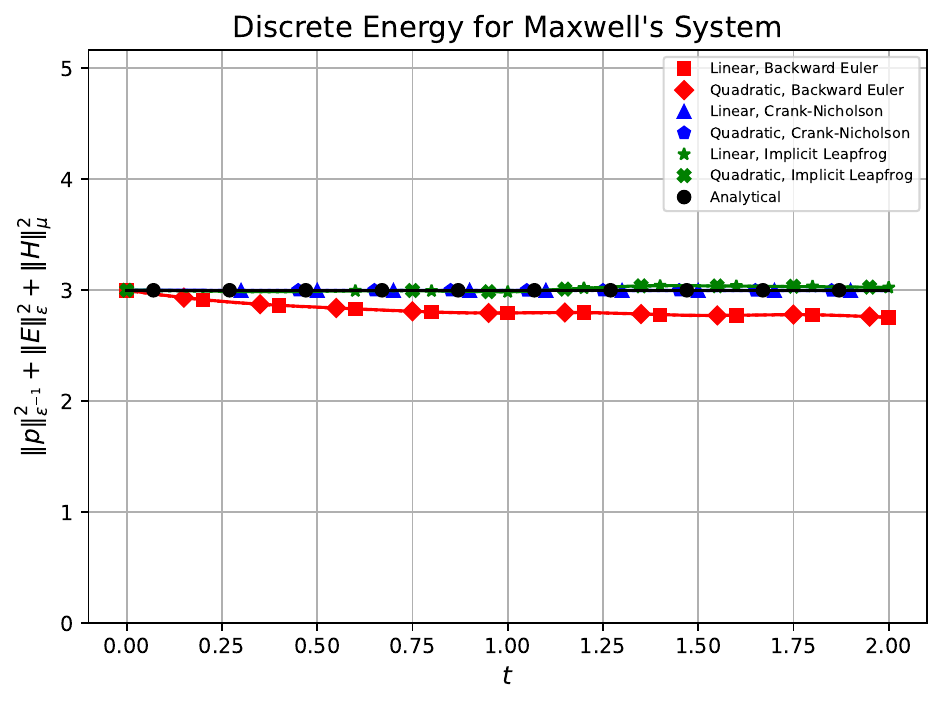}
  \end{subfigure}
  \caption{\textbf{Energy Conservation for Examples 1} (\textit{left}) \textbf{and 3} (\textit{right}): Energy is conserved for both Crank-Nicholson and implicit leapfrog time integration schemes with both linear and quadratic finite elements for the experiments whose solutions are shown in Figures~\labelcref{fig:example1_cn,fig:example1_lf,fig:example3_cn,fig:example3_lf}. For contrast, we also show the energy decay using the stable but dissipative backward Euler time integration scheme for both these problems.}
  \label{fig:2d_energies}
\end{figure}

\vspace*{\fill}
\begin{figure}[h]
  \centering
  \begin{subfigure}[t]{0.48\linewidth}
    \centering \includegraphics[scale=0.5]{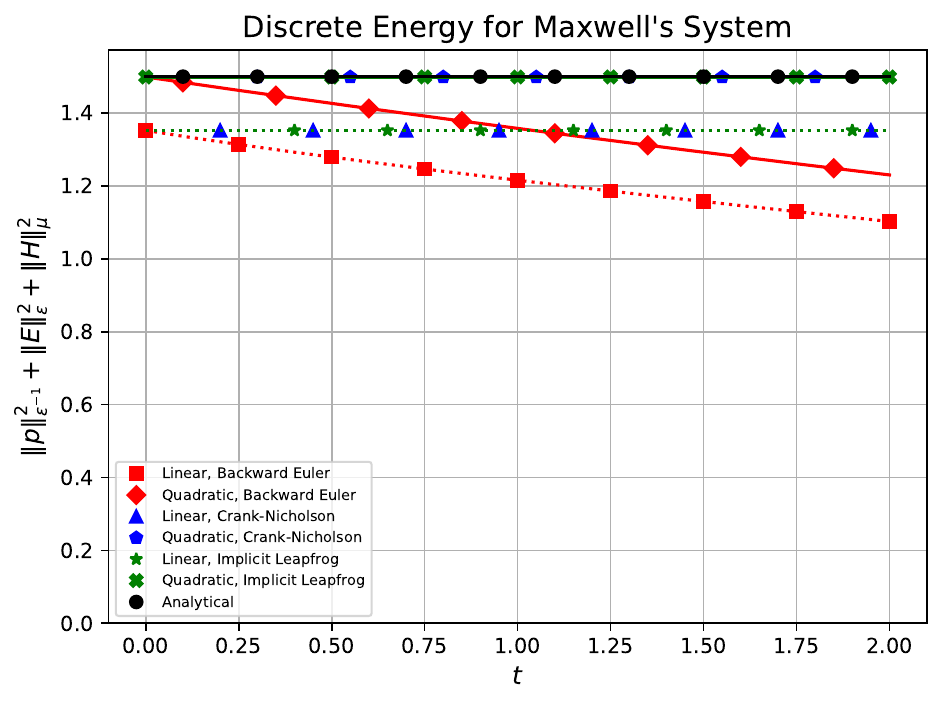}
  \end{subfigure}
  \hspace{0.02\linewidth}
  \begin{subfigure}[t]{0.48\linewidth}
    \centering \includegraphics[scale=0.5]{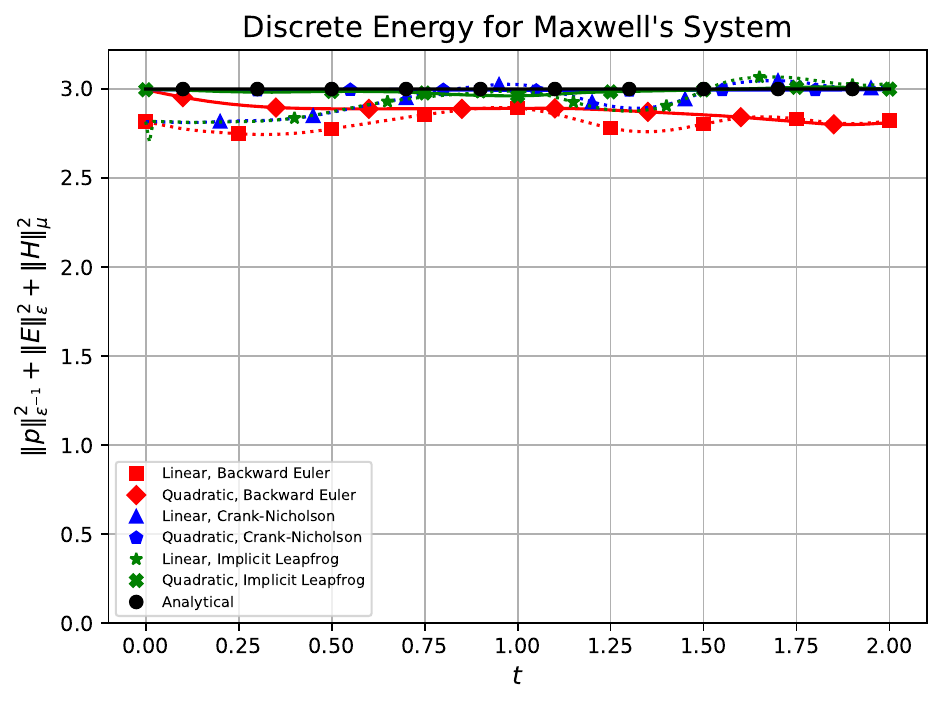}
  \end{subfigure}
  \caption{\textbf{Energy Conservation for Examples 2} (\textit{left}) \textbf{and 4} (\textit{right}): The discrete energy conservation for both Crank-Nicholson and implicit leapfrog time integration schemes in $\R^3$ are numerically more robust with quadratic finite elements for the experiments whose solutions are shown in Figures~\labelcref{fig:example2_cn,fig:example2_lf,fig:example4_cn,fig:example4_lf}. For contrast, we again show the energy decay for backward Euler time integration scheme with both linear and quadratic finite elements. But, we also empirically validate that computational errors in our second order accurate in time implicit methods are useful vis-a-vis energy conservation only by also using second order accurate in space finite elements.}
  \label{fig:3d_energies}
\end{figure}
\vspace{\fill}
\clearpage

\vspace{\fill}
\begin{figure}[h]
  \centering
  \includegraphics[scale=1.2]{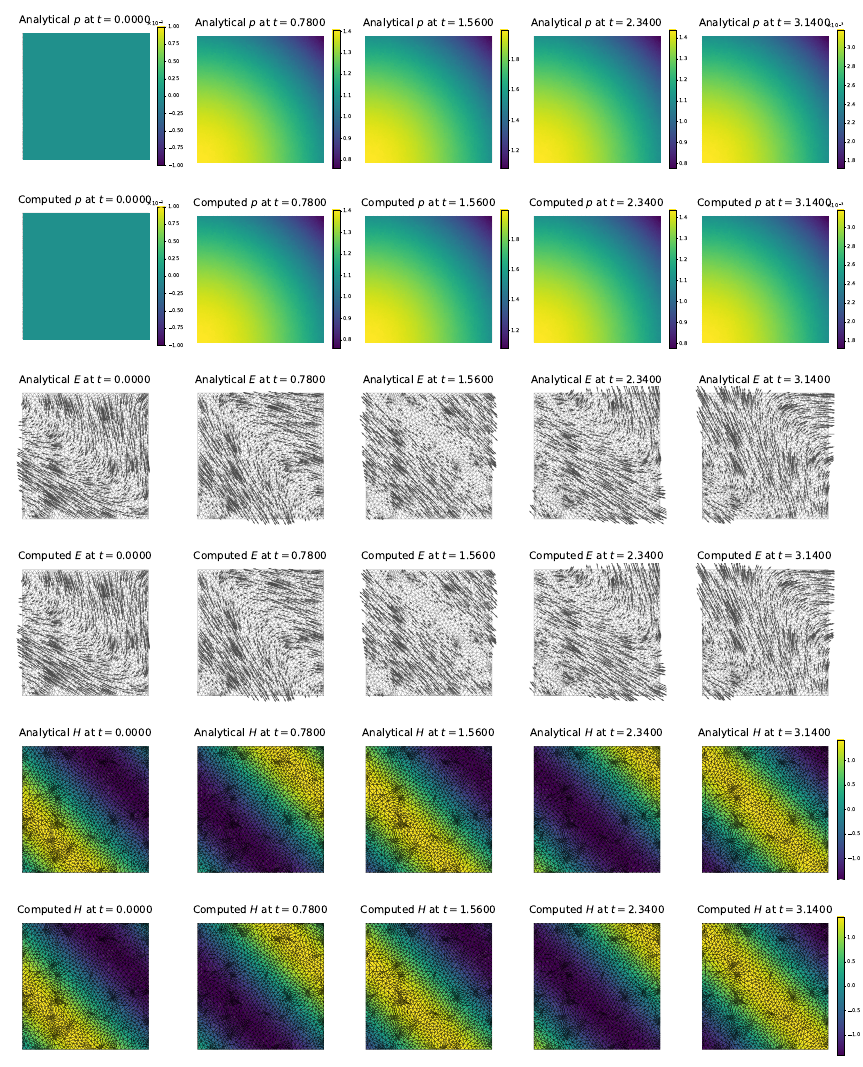}
  \caption{\textbf{Quadratic finite elements, Crank-Nicholson}: Plots of solutions at different time steps for the problem described in \textbf{Example 5} of Section~\ref{sec:numerics} using the Crank-Nicholson time integration and quadratic Whitney forms as basis for the finite element spaces. The computed solutions for $p$, $E$ and $H$ match with the analytical solutions near identically.}
  \label{fig:example5_cn}
\end{figure}
\vspace{\fill}
\clearpage

\vspace{\fill}
\begin{figure}[h]
  \centering
  \includegraphics[scale=1.2]{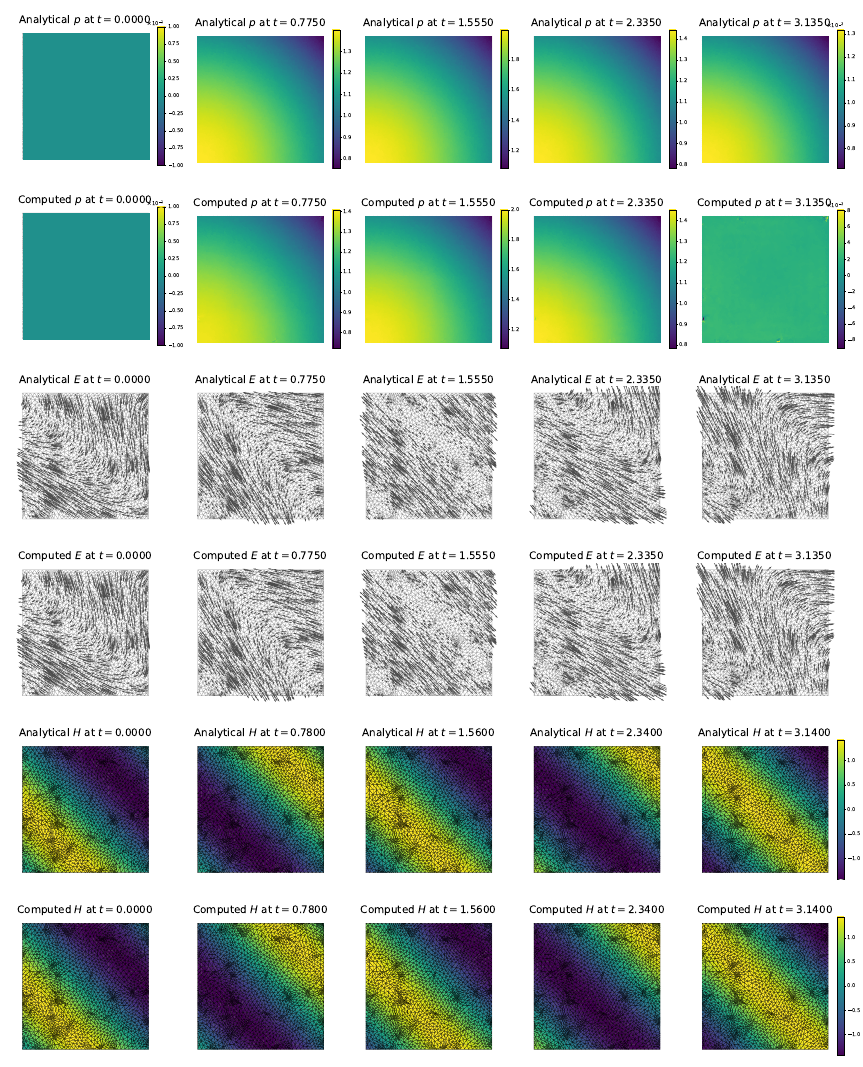}
  \caption{\textbf{Quadratic finite elements, implicit leapfrog}: Plots of solutions at different time steps for the problem described in \textbf{Example 5} of Section~\ref{sec:numerics} using the implicit leapfrog time integration and quadratic Whitney forms. The computed solutions for $p$, $E$ and $H$ match with the analytical solutions near identically.}
  \label{fig:example5_lf}
\end{figure}
\vspace{\fill}
\clearpage

\vspace{\fill}
\begin{figure}[h]
  \centering
  \includegraphics[scale=1.2]{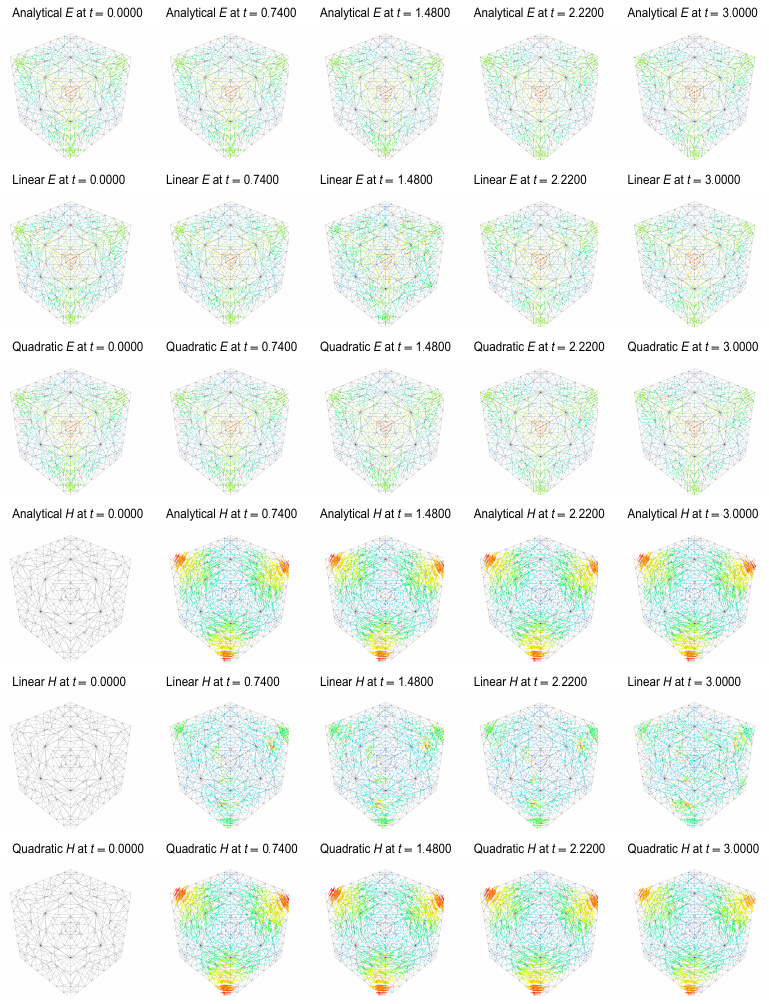}
  \caption{\textbf{Quadratic finite elements, Crank-Nicholson}: Plots of solutions at different time steps for the problem described in \textbf{Example 6} of Section~\ref{sec:numerics} using the Crank-Nicholson time integration and quadratic Whitney forms as basis for the finite element spaces. The computed solutions for $p$, $E$ and $H$ match with the analytical solutions near identically.}
  \label{fig:example6_cn}
\end{figure}
\vspace{\fill}
\clearpage

\vspace{\fill}
\begin{figure}[h]
  \centering
  \includegraphics[scale=1.2]{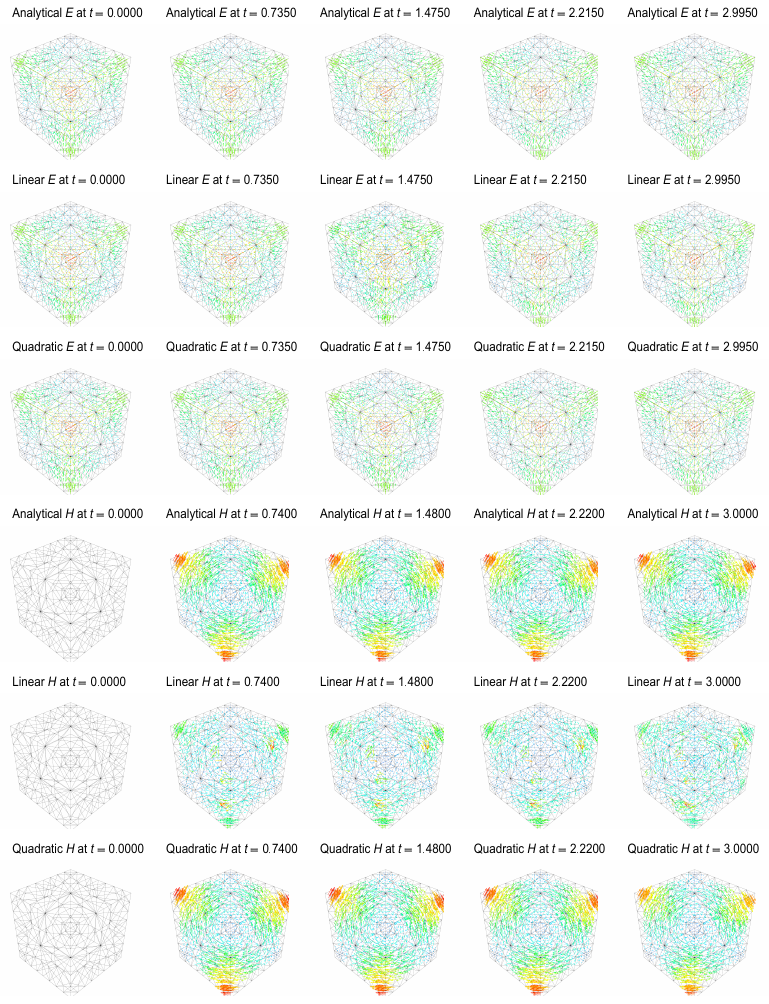}
  \caption{\textbf{Quadratic finite elements, implicit leapfrog}: Plots of solutions at different time steps for the problem described in \textbf{Example 6} of Section~\ref{sec:numerics} using the implicit leapfrog time integration and quadratic Whitney forms. The computed solutions for $p$, $E$ and $H$ match with the analytical solutions near identically.}
  \label{fig:example6_lf}
\end{figure}
\vspace{\fill}
\clearpage

\begin{figure}[h]
  \centering
  \begin{subfigure}[t]{0.48\linewidth}
    \centering \includegraphics[scale=0.5]{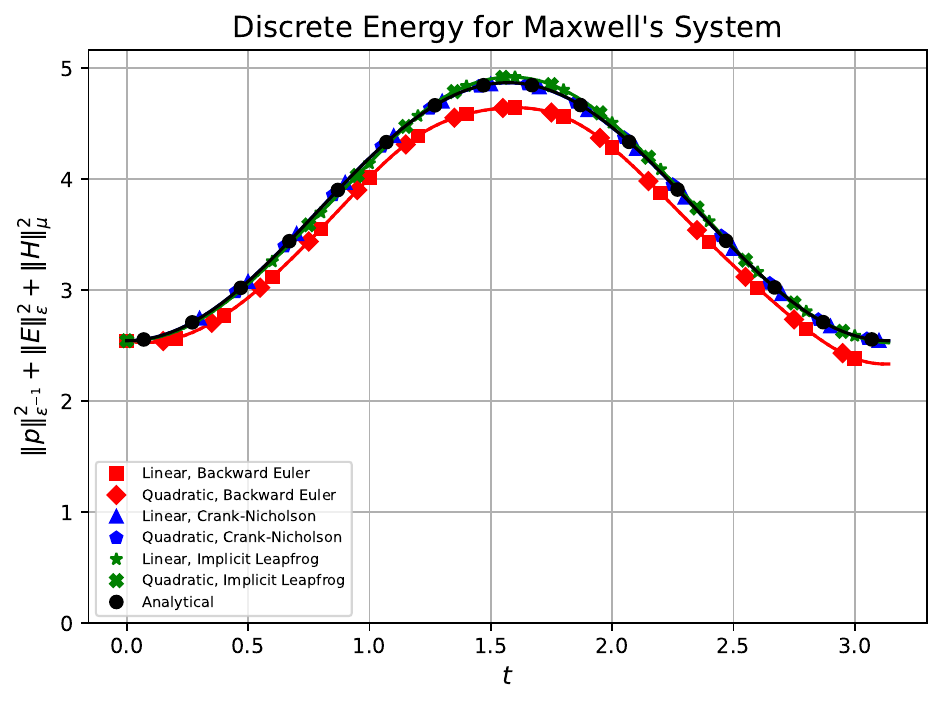}
  \end{subfigure}
  \hspace{0.025\linewidth}
  \begin{subfigure}[t]{0.48\linewidth}
    \centering \includegraphics[scale=0.5]{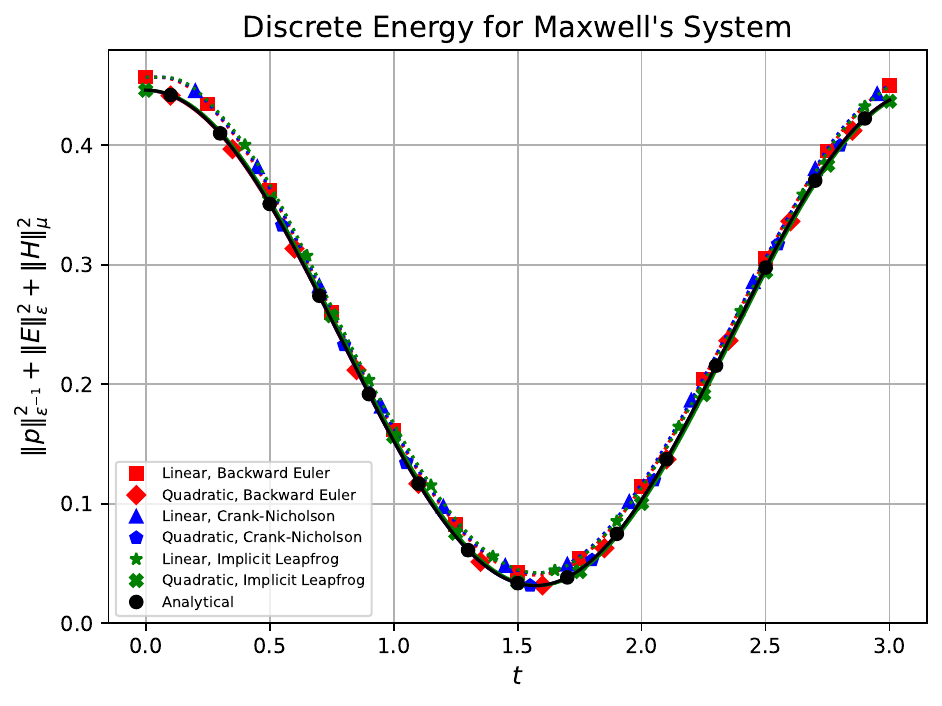}
  \end{subfigure}
  \caption{\textbf{Energy Tracking for Examples 5} (\textit{left}) \textbf{and 6} (\textit{right}): The discrete energy for these problems is time varying but bounded. Both Crank-Nicholson and implicit leapfrog time integration schemes in $\R^2$ (\textit{left}) and $\R^3$ (\textit{right}) are able to discretely exactly track this varying energy but the backward Euler scheme for Example 5 is dissipative.}
  \label{fig:varying_energies}
\end{figure}

\printbibliography
\end{document}